\newtheorem{theorem}{Theorem}[section]
\newtheorem{definition}[theorem]{Definition}
\newtheorem{example}[theorem]{Example}
\newtheorem{proposition}[theorem]{Proposition}
\newtheorem{lemma}[theorem]{Lemma}
\newtheorem{remark}[theorem]{Remark}
\newtheorem{corollary}[theorem]{Corollary}
\renewcommand{\AA}{\mathbb{A}}
\newcommand{\BB}{\mathcal{B}}
\newcommand{\CC}{\mathbb{C}}
\newcommand{\EE}{\mathcal{E}}
\newcommand{\FF}{\mathcal{F}}
\newcommand{\kk}{\Bbbk}
\newcommand{\KK}{\mathbb{K}}
\newcommand{\LL}{\mathbb{L}}
\newcommand{\NN}{\mathbb{N}}
\newcommand{\PP}{\mathbb{P}}
\newcommand{\QQ}{\mathbb{Q}}
\newcommand{\TT}{\mathbb{T}}
\newcommand{\ZZ}{\mathbb{Z}}
\newcommand{\Mst}{\mathfrak{M}}
\newcommand{\Msp}{\mathcal{M}}
\newcommand{\unit}{\mathbbm{1}}
\newcommand{\ICS}{\mathcal{IC}}
\newcommand{\DTS}{\mathcal{DT}}
\newcommand{\Part}{\mathcal{P}}
\newcommand{\Xst}{\mathfrak{X}}
\newcommand{\GG}{\mathbb{G}}
\DeclareMathOperator{\Hom}{Hom}
\DeclareMathOperator{\End}{End}
\DeclareMathOperator{\Ext}{Ext}
\DeclareMathOperator{\dgV}{dg-Vect}
\DeclareMathOperator{\Vect}{Vect}
\DeclareMathOperator{\rep}{-Rep}
\DeclareMathOperator{\Ka}{K}
\DeclareMathOperator{\Aut}{Aut}
\DeclareMathOperator{\codim}{codim}
\DeclareMathOperator{\Perv}{Perv}
\DeclareMathOperator{\MHM}{MHM}
\DeclareMathOperator{\rat}{rat}
\DeclareMathOperator{\IC}{IC}
\DeclareMathOperator{\DT}{DT}
\DeclareMathOperator{\Sym}{Sym}
\DeclareMathOperator{\Alt}{Alt}
\DeclareMathOperator{\Var}{Var}
\DeclareMathOperator{\Spec}{Spec}
\DeclareMathOperator{\Gl}{GL}
\DeclareMathOperator{\id}{id}
\DeclareMathOperator{\pr}{pr}
\DeclareMathOperator{\Quot}{Quot}
\DeclareMathOperator{\Gr}{Gr}
\DeclareMathOperator{\Char}{char}
\title[DT invariants vs.\ intersection cohomology of quiver moduli]{Donaldson--Thomas invariants versus intersection cohomology of quiver moduli}
\author{Sven Meinhardt \and Markus Reineke}
\begin{document}

\begin{abstract}
The main result of this paper is the statement that the Hodge theoretic Donaldson--Thomas invariant for a quiver with zero potential and a generic stability condition agrees with the compactly supported intersection cohomology of the closure of the stable locus inside the associated coarse moduli space of semistable quiver representations. In fact, we prove an even stronger result relating the Donaldson--Thomas ``function'' to the intersection complex. The proof of our main result relies on a relative version of the integrality conjecture in Donaldson--Thomas theory. This will be the topic of the second part of the paper, where the relative integrality conjecture will be proven in the motivic context.
\end{abstract}

\maketitle

\tableofcontents

\section{Introduction}

The theory of Donaldson--Thomas invariants  started around 2000 with the seminal work of R.\ Thomas \cite{Thomas1}. He associated  integers to moduli spaces in the absence of strictly semistable objects. Six years later D. Joyce \cite{JoyceI},\cite{JoyceCF},\cite{JoyceII},\cite{JoyceIII},\cite{JoyceMF},\cite{JoyceIV} and Y.\ Song \cite{JoyceDT} extended the theory, producing (possibly rational) numbers even in the presence of semistable objects which is the generic situation. Around the same time, M.\ Kontsevich and Y.\ Soibelman \cite{KS1},\cite{KS2},\cite{KS3} independently proposed a theory producing motives instead of simple numbers, also in the presence of semistable objects. The technical difficulties occurring in their approach disappear in the special situation of representations of quivers (with zero potential). This case has been  intensively studied by the second author in a series of papers \cite{Reineke2},\cite{Reineke3},\cite{Reineke4}. \\
Despite some computations of motivic or even numerical Donaldson--Thomas invariants for quivers with or without potential (see \cite{BBS},\cite{DaMe1},\cite{DaMe2},\cite{MMNS}), the true nature of Donaldson--Thomas invariants still remains mysterious.\\[1ex]
This paper is a first step to disclose the secret by showing that the Donaldson--Thomas invariants for quiver representations compute the compactly supported intersection cohomology of the closure of the stable locus inside the associated coarse moduli space of semistable representations. While trying to prove this result, the authors observed the importance of the integrality conjecture, which was the reason to extend the paper by a second part containing its proof. \\ 
We will actually prove an even stronger version by defining a Donaldson--Thomas function on the coarse moduli space $\Msp^{ss}$. Strictly speaking, this ``function'' is an element in a suitably extended Grothendieck group of mixed Hodge modules. The cohomology with compact support of that element is the usual Hodge theoretic Donaldson--Thomas invariant - a class in the Grothendieck group of mixed Hodge structures. Our main result is the following (we refer to the following sections for precise notation):
\begin{theorem}
 For a  generic stability condition the Donaldson--Thomas function is the class of the intersection complex $\ICS_{\overline{\Mst^{st}}}(\QQ)$ of the closure of the stable locus $\Msp^{st}$ inside the coarse moduli space $\Msp^{ss}$. In particular, by taking cohomology with compact support, we obtain for every dimension vector $d$ 
 \[ \DT_d=\begin{cases} \IC_c(\Msp^{ss}_d,\QQ)= \IC(\Msp^{ss}_d,\QQ)^\vee &\mbox{ if } \Msp^{st}_d\neq \emptyset, \\
          0 & \mbox{ otherwise}
         \end{cases} \]
in the Grothendieck ring of (polarizable) mixed Hodge structures.
\end{theorem}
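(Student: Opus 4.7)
The plan is to extract the Donaldson--Thomas function from the pushforward of the constant Hodge module on the stack of semistable representations, and to identify it as the intersection complex via three properties that characterize $\ICS$: correct support, correct restriction to the stable locus, and purity/semisimplicity as a perverse sheaf. First I would consider the canonical morphism $p_d\colon \Mst^{ss}_d \to \Msp^{ss}_d$ from the moduli stack of semistable representations to its coarse moduli space, and assemble these maps across all dimension vectors into a structure compatible with the direct-sum operation on representations. The relative integrality conjecture (proved motivically in Part II and transported here to mixed Hodge modules) then provides a factorization of $\bigoplus_d p_{d,*}(\unit)$, modulo the $\GG_m$ of global scalars, as a symmetric algebra generated by complexes $\DTS_d$ on $\Msp^{ss}_d$; by definition these $\DTS_d$ are the Donaldson--Thomas functions whose identity I must pin down.

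With $\DTS_d$ in hand, I would then verify the three characterizing properties of $\ICS_{\overline{\Msp^{st}_d}}(\QQ)$. For the support, the Jordan--H\"older stratification combined with genericity of the stability (which ensures that stable and semistable of the same slope coincide generically, with no accidental refinements) implies that any $S$-equivalence class lying outside $\overline{\Msp^{st}_d}$ represents a nontrivial polystable decomposition; the integrality factorization then forces $\DTS_d$ to vanish at such points, since the contribution is already accounted for in the symmetric product of smaller $\DTS_e$. For the restriction to the stable locus, over $\Msp^{st}_d$ the stack $\Mst^{st}_d$ is a $\GG_m$-gerbe, so $p_{d,*}(\unit)$ restricts to $H^*(B\GG_m)$ tensored with the constant sheaf; matching this against the symmetric-algebra formula isolates $\DTS_d\vert_{\Msp^{st}_d}$ as $\QQ_{\Msp^{st}_d}$ up to the appropriate Tate twist and perverse shift. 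For purity, I would apply the decomposition theorem to a proper rigidification of $p_d$ obtained by rigidifying the scalar automorphisms and exploiting projectivity of the coarse moduli map, then track which semisimple perverse summand corresponds to $\DTS_d$ under the symmetric-algebra factorization.

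Given these three ingredients, the Deligne--Goresky--MacPherson characterization of the intersection complex as the unique simple perverse extension of the constant sheaf from a smooth open dense locus identifies $\DTS_d$ with $\ICS_{\overline{\Msp^{st}_d}}(\QQ)$ in the extended Grothendieck group, and the numerical statement for $\DT_d$ follows by taking compactly supported cohomology and invoking Verdier duality in the form $\IC_c = \IC^\vee$. The hard part will be the relative integrality statement itself, which is exactly why the paper is split into two parts: extracting a single well-defined class $\DTS_d$ on $\Msp^{ss}_d$ from the global pushforward over all dimension vectors requires controlling wall-crossing, nilpotent contributions, and monodromy in families in a fully relative sense. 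A secondary subtlety lies in propagating purity from the proper rigidified map to the summand $\DTS_d$, since it does not follow formally but depends on separating the decomposition-theorem summands according to the Jordan--H\"older stratification and ruling out contributions supported strictly inside the strictly-semistable locus.
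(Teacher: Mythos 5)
The central gap is your reliance on applying the decomposition theorem to (a rigidification of) $p_d\colon \Mst^{ss}_d\to\Msp^{ss}_d$. Rigidifying removes only the diagonal $\GG_m$; at strictly semistable points the automorphism groups are products of general linear groups, so the rigidified stack is still not Deligne--Mumford, and its map to the coarse space is neither representable nor proper. Hence neither the decomposition theorem nor relative hard Lefschetz applies to it, and no amount of ``exploiting projectivity of the coarse moduli map'' $\Msp^{ss}_d\to\Msp^{ssimp}_d$ repairs this. The essential move in the paper, which your outline omits entirely, is to replace $p_d$ by the Hilbert--Chow morphism $\pi\colon\Msp^{ss}_{f,d}\to\Msp^{ss}_d$ from the \emph{smooth framed moduli variety}: $\pi$ is projective and, for a $\mu$-generic stability, virtually small (Theorem \ref{virtsmall}), and the DT/PT correspondence (Corollary \ref{alternative_form}) expresses $\pi_\ast\ICS_{\Msp^{ss}_{f,\mu}}(\QQ)$ as $\Sym\bigl(\sum_{d}[\PP^{f\cdot d-1}]_{vir}\DTS_d\bigr)$. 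This is the only place where a proper map of varieties enters, and it is indispensable.

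A second, related problem is that $\DTS_d$ is merely a class in $\underline{\Ka}_0(\MHM(\Msp^{ss}_\mu))[\LL^{-1/2},(\LL^r-1)^{-1}:r\ge 1]$, not an actual mixed Hodge module, so ``purity and semisimplicity as a perverse sheaf'' and the Deligne--Goresky--MacPherson characterization cannot be invoked for it; likewise your support claim (that the $\Sym$ factorization ``forces $\DTS_d$ to vanish'' off $\overline{\Msp^{st}_d}$) is the conclusion of the theorem, not a consequence of the definition. The paper instead separates summands by bookkeeping in the free $\ZZ[\LL^{\pm 1/2}]$-module spanned by the classes $\ICS_Z(L)$: expanding the $\Sym$ by induction on $|d|$ (using the Schur-functor formula (\ref{eqn3})) exhibits $\DTS_d$ with coefficient $[\PP^{f\cdot d-1}]_{vir}$, a palindromic Laurent polynomial of degree $f\cdot d-1$, while virtual smallness together with relative hard Lefschetz bounds every other coefficient in either expansion by palindromic polynomials of strictly smaller degree. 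The relative integrality theorem (Theorem \ref{intconj}) is then used to clear the denominators $(\LL^r-1)^{-1}$ at a generic point of any putative extra support, and the degree comparison forces $\DTS_d=\ICS_{\overline{\Msp^{st}_d}}(\QQ)$. Without the framed moduli spaces, the virtual smallness estimate, and this degree count, your three-property strategy cannot be completed as stated.
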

As Donaldson--Thomas invariants for quiver representations can be computed with computer power quite effectively, this theorem provides a quick algorithm to determine intersection Hodge numbers. The previous algorithm to do that goes back to extensive work of F.\ Kirwan around 1985 (see \cite{Kirwan1},\cite{Kirwan4},\cite{Kirwan2},\cite{Kirwan3}) and is impracticable. Moreover, using wall-crossing formulas, we are now able to understand the change of intersection Hodge numbers under variations of stability conditions.\\
For the next corollary we mention that the moduli space of semistable quiver representations admits a proper map to the affine, connected moduli space of semisimple representations of the same dimension vector. If the quiver is acyclic, there is only one such semisimple representation. Thus,  the moduli space $\Msp^{ss}_d$ must be compact.
\begin{corollary}[Positivity]
 If $Q$ is acyclic  and the  stability condition generic, the (motivic) Donaldson--Thomas invariant $\DT_d$ is a palindromic polynomial in the Lefschetz motive with positive coefficients. 
\end{corollary}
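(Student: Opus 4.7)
The plan is to combine the main theorem with Saito's purity and Poincar\'e duality for intersection cohomology on projective varieties, together with the Hodge--Tate nature of the cohomology of quiver moduli. For the generic stability condition and a dimension vector $d$ with $\Msp^{st}_d\neq\emptyset$, the main theorem identifies
\[ \DT_d \;=\; \IC_c(\Msp^{ss}_d,\QQ) \]
in the Grothendieck ring of polarizable mixed Hodge structures; if $\Msp^{st}_d=\emptyset$ then $\DT_d=0$ and the statement is vacuous. Since $Q$ is acyclic, the paragraph preceding the corollary shows that $\Msp^{ss}_d$ is in fact projective, because the only semisimple representation of dimension $d$ is $\bigoplus_i S_i^{d_i}$ and the proper map to it lands in a point.

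Purity and palindromicity are then automatic. By Saito's theory each group $H^i_c(\Msp^{ss}_d,\ICS(\QQ))$ is a pure polarizable Hodge structure of weight $i$, and Poincar\'e--Verdier self-duality of the intersection complex on the projective variety $\Msp^{ss}_d$ of dimension $n:=\dim\Msp^{ss}_d$ gives
\[ H^i_c\bigl(\Msp^{ss}_d,\ICS(\QQ)\bigr)^{\vee}\;\cong\;H^{2n-i}_c\bigl(\Msp^{ss}_d,\ICS(\QQ)\bigr)(n), \]
which at the level of $K_0(\MHM)$ is exactly the palindromic symmetry $a_k=a_{n-k}$ on the coefficients of $\LL^k$ in $\DT_d$, once the class is known to lie in $\ZZ[\LL]$.

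The heart of the matter, and the main obstacle, is to confine $\DT_d$ to the Hodge--Tate subring $\ZZ[\LL]\subset K_0(\MHM)$; combined with the purity of the previous step this forces $H^{2k+1}_c(\ICS)=0$ and $H^{2k}_c(\ICS)\cong\QQ(-k)^{a_k}$ with $a_k\geq 0$, so $\DT_d=\sum_k a_k\LL^k$ becomes the desired palindromic polynomial with non-negative integer coefficients. I would proceed by induction on $d$ using the plethystic identity that expresses the motivic class of the semistable stack $\Mst^{ss}_d$ in terms of the DT classes $\DT_{d'}$, $d'\leq d$. For acyclic $Q$ the classes $[\Mst^{ss}_d]$ are rational functions in $\LL$, computable from the Harder--Narasimhan recursion of the second author, so the Hodge--Tate property propagates through the plethystic inversion (which involves only Adams operations and therefore preserves the Hodge--Tate subring). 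The delicate point is that Saito purity alone does not rule out non-Tate pure sub-Hodge structures inside $IH^\bullet(\Msp^{ss}_d)$, and the positivity assertion hinges precisely on making the plethystic inversion compatible with the Hodge realization inside the completed $\lambda$-ring $K_0(\MHM)$.
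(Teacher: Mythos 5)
Your proposal is correct and follows essentially the same route as the paper: the main theorem identifies $\DT_d$ with $\IC_c(\Msp^{ss}_d,\QQ)$, acyclicity gives projectivity, the a priori fact that $\DT_d$ lies in $\ZZ[\LL^{\pm 1/2}][(\LL^r-1)^{-1}]$ (via the Harder--Narasimhan recursion and the compatibility of the Hodge realization with the $\lambda$-ring structure) combines with purity to force even-degree Tate classes and hence positivity, and self-duality/hard Lefschetz gives palindromicity. The only cosmetic difference is that the paper invokes hard Lefschetz where you invoke Poincar\'e--Verdier duality; both suffice for the symmetry of the coefficients.
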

Indeed, it is not hard to see that $\DT_d$ is always a rational function in the square root $\LL^{1/2}$ of the Lefschetz motive. Due to our main result, it must actually be a polynomial in the Lefschetz motive (up to normalization). By compactness (and normalization), $\IC^k(\Msp^{ss}_d,\QQ)$ carries a Hodge structure of weight $k$, and this can only happen for even $k$ as there are no Lefschetz motives in odd degree. The hard Lefschetz theorem implies that $\DT_d$ is a palindromic polynomial.\\
The next result is a direct consequence of our main theorem, Proposition \ref{localDT} and Corollary \ref{localDT2}.
\begin{corollary}[Locality]
 Fix a generic stability condition and a closed point $x\in \Msp^{ss}$, that is, a polystable complex representation  $V=\bigoplus_{k\in K} E_k^{m_k}$ of $Q$ with stable summands $E_k$. If the moduli space also contains stable representations, then the fiber at $x$ of the intersection complex of the moduli space is given by a certain Donaldson--Thomas invariant for the $\Ext$-quiver of the collection $E=(E_k)_{k\in K}$.  
\end{corollary}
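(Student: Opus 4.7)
The plan is to combine the main theorem (applied twice, once to $Q$ and once to the Ext-quiver) with a local slice description of quiver moduli near a polystable point, which is exactly what Proposition \ref{localDT} and Corollary \ref{localDT2} are meant to deliver.

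First, since the moduli space $\Msp^{ss}$ contains stable representations, the main theorem identifies the Donaldson--Thomas function on $\Msp^{ss}$ with the intersection complex $\ICS_{\overline{\Mst^{st}}}(\QQ)$. Taking the fiber at $x$ therefore reduces the corollary to describing the stalk of this IC sheaf at the polystable point $[V]$, where $V = \bigoplus_{k\in K} E_k^{m_k}$.

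Second, I would invoke Proposition \ref{localDT}, which is a Luna-type étale slice statement for quiver moduli: in an étale (or formal) neighborhood of $x$, the coarse moduli space $\Msp^{ss}$ is modelled on the coarse moduli space $\Msp^{ss}_{Q^{ext},m}$ of representations of the Ext-quiver $Q^{ext}$ of the collection $E = (E_k)_{k\in K}$, at the dimension vector $m = (m_k)_{k\in K}$, centered at the image of the zero (maximally polystable) representation. The Ext-quiver arises from the deformation theory of $V$: its vertex set is $K$, and the number of arrows $k \to \ell$ equals $\dim \Ext^1(E_k,E_\ell)$, with the stabilizer $\prod_k \Gl_{m_k}$ acting naturally. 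Because the intersection complex is a local-étale invariant (up to the appropriate shift and twist), the stalk of $\ICS_{\overline{\Mst^{st}}}(\QQ)$ at $x$ is identified with the stalk of $\ICS_{\overline{\Mst^{st}_{Q^{ext},m}}}(\QQ)$ at the origin of $\Msp^{ss}_{Q^{ext},m}$.

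Third, I would apply the main theorem again, now to the Ext-quiver: Corollary \ref{localDT2} is responsible for transporting the generic stability condition on $Q$ to a generic stability condition on $Q^{ext}$ for which the relevant stable locus at dimension $m$ is non-empty, so that the hypothesis of the main theorem is satisfied. The theorem then converts the IC stalk at the origin for the Ext-quiver moduli space into the value of the Donaldson--Thomas function of $(Q^{ext}, m)$ at that distinguished point, yielding the advertised Donaldson--Thomas invariant of the Ext-quiver. The main obstacle, and the reason this is packaged into a separate proposition and corollary, is the slice theorem itself: one has to show that the étale-local model near $x$ really is the Ext-quiver moduli space, check that stability conditions and stable loci match under this identification, and verify that passing to fibers of mixed Hodge modules is compatible with the étale slice so that the equality of DT functions can be restricted to a single point.
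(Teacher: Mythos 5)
Your strategy is coherent and would work over $\CC$, but it is genuinely different from the paper's, and you have misidentified what Proposition \ref{localDT} and Corollary \ref{localDT2} actually contain: they are not a Luna-type slice theorem and a transport-of-stability statement. Proposition \ref{localDT} is the identity $\DT^{mot}(Q_\xi)|_{\LL^{1/2}\mapsto\LL^{-1/2}}=\iota_E^\ast\DTS^{mot}_\mu$, where $\iota_E:\NN^s\times\Spec\KK\to\Msp^{ss}_\mu$ classifies the family $\bigoplus_k E_k^{n_k}$; it is proved by a purely Hall-algebraic computation (the convolution identity $f\ast g=1$, obtained from the socle filtration and the vanishing of an alternating sum of $\LL$-binomial coefficients) together with the integration map of Lemma \ref{integration_map}, with no \'etale local model of the moduli space anywhere. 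Corollary \ref{localDT2} then specializes to a single multiplicity vector and identifies the value $\DTS^{mot}(y)$ at the polystable point with the value $\DT^{mot}(Q_\xi)^{nilp}_m$ of the Ext-quiver's DT function at the origin. Consequently the paper applies the main theorem only once: the IC stalk at $x$ equals $\DTS(x)$ by Theorem \ref{DT=IC}, and Corollary \ref{localDT2} converts this directly into the (nilpotent) DT invariant of $Q_\xi$. Your route instead computes the IC stalk geometrically via an \'etale slice and then applies the main theorem a second time, to $Q_\xi$ with the trivial stability --- which is indeed generic, since $Q_\xi$ is symmetric precisely because $\zeta$ is $\mu$-generic, and one checks $(m,m)_{Q_\xi}=(d,d)_Q$ so the normalizations match. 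This is viable, but it rests entirely on the slice theorem and the compatibility checks you list at the end (identification of the stable locus of $Q$ near $x$ with the simple locus of $Q_\xi$ near $0$, non-emptiness of the latter, compatibility of stalks), none of which the paper carries out in this form. What the paper's algebraic route buys is that it avoids these checks and remains valid motivically over arbitrary ground fields of characteristic zero, where the Hodge-theoretic main theorem is unavailable; what your route buys is a conceptual, deformation-theoretic explanation for why the Ext-quiver appears, at the price of a second use of the main theorem and an unproven geometric input.
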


Finally, we will give, in Theorem \ref{mi}, an explicit formula for the intersection Betti numbers of the classical spaces of matrix invariants (that is, the quotient of tuples of linear operators by simultaneous conjugation), using the explicit formula for motivic DT invariants for loop quivers in \cite{Reineke4}.\\[1ex]
The paper is organized as follows. Section 2 provides some background on quivers and their representations. The main purpose is to fix the notation. Although we will not use it, subsection 2.1 also contains a quick link to 3-Calabi--Yau categories - the natural environment of Donaldson--Thomas theory. The most important result of section 2 is Theorem \ref{virtsmall},  stating that the so-called Hilbert--Chow morphism from the moduli space $\Msp^{ss}_{f,d}$ of framed representations to the moduli space $\Msp^{ss}_d$ of unframed representations is what we will call virtually small.
\begin{theorem} For a generic stability condition and a dimension vector $d$, the Hilbert--Chow morphism $\pi:\Msp^{ss}_{f,d} \longrightarrow \Msp^{ss}_d$ is projective and virtually small, that is, there is a finite stratification $\Msp^{ss}_d=\sqcup_\xi S_\xi$ with empty or dense stratum $S_0=\Msp^{st}_d$ such that $\pi^{-1}(S_\xi) \longrightarrow S_\xi$ is \'etale locally trivial and 
 \[ \dim \pi^{-1}(x_\xi) - \dim \PP^{f\cdot d-1} \le \frac{1}{2} \codim S_\xi\] 
for every $x_\xi\in S_\xi$ with equality only for $S_\xi=S_0\not=\emptyset$ with fiber $\pi^{-1}(x_0)\cong \PP^{f\cdot d-1}$.
\end{theorem}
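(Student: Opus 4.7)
The plan is to stratify $\Msp^{ss}_d$ by the isomorphism type of the underlying polystable representation, and on each stratum analyse the fibre of $\pi$ via a Luna-slice description of the framed and unframed GIT quotients.

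For a closed point $x\in\Msp^{ss}_d$ corresponding to the polystable $V_x=\bigoplus_{k\in K(x)}E_k^{m_k}$ with pairwise non-isomorphic stable summands $E_k$ of dimension vectors $d_k$, I would set $\xi(x):=\{(d_k,m_k)\}_k$ and let $S_\xi$ be the locally closed locus of points of this type. The trivial type ($|K|=1$, $m_1=1$) is precisely $S_0=\Msp^{st}_d$, open and dense whenever non-empty. Projectivity of $\pi$ follows from the GIT construction: both $\Msp^{ss}_{f,d}$ and $\Msp^{ss}_d$ are GIT quotients of smooth affine varieties of (framed resp.\ unframed) representations, and $\pi$ is induced by a further linearization of the framing data.

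Next I would apply Luna's étale slice theorem to identify an étale neighbourhood of $V$ in $\Msp^{ss}_d$ with one of $0$ in $\Ext^1(V,V)/\!/\Aut(V)$, using $\Aut(V)=\prod_k\Gl_{m_k}$ (since $\Hom(E_k,E_l)=\delta_{kl}\kk$ for the stable summands). An analogous slice for the framed moduli realises a neighbourhood of $\pi^{-1}(V)$ as a GIT quotient of $\Ext^1(V,V)\oplus\Hom(\kk^{f},V)$ by $\Aut(V)$, compatibly with $\pi$. This shows that $\pi^{-1}(S_\xi)\to S_\xi$ is étale locally trivial, with fibre $\pi^{-1}(x_\xi)$ equal to the GIT quotient of the open locus of generating framings in $\Hom(\kk^f,V)\cong\kk^{f\cdot d}$ by the free action of $\Aut(V)$. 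Hence
$$\dim\pi^{-1}(x_\xi)=f\cdot d-\sum_k m_k^2,$$
recovering $\pi^{-1}(V)\cong\PP^{f\cdot d-1}$ when $\xi=0$. Using $\dim\Msp^{ss}_d=1-\langle d,d\rangle$ (which holds since $\Msp^{st}_d\neq\emptyset$ in the equality case, the only case where this dimension is needed sharply), $\dim S_\xi=\sum_k m_k(1-\langle d_k,d_k\rangle)$, and $\langle d_k,d_l\rangle=\delta_{kl}-\dim\Ext^1(E_k,E_l)$, a direct computation gives
$$\codim S_\xi=1-\sum_k m_k^2+\sum_{k\neq l}m_k m_l\dim\Ext^1(E_k,E_l)+\sum_k m_k(m_k-1)\dim\Ext^1(E_k,E_k),$$
so the desired bound $2\bigl(\dim\pi^{-1}(x_\xi)-(f\cdot d-1)\bigr)\le\codim S_\xi$ becomes
$$\sum_k m_k^2+\sum_{k\neq l}m_k m_l\dim\Ext^1(E_k,E_l)+\sum_k m_k(m_k-1)\dim\Ext^1(E_k,E_k)\ge 1.$$
This is immediate, since $\sum_k m_k^2\ge 1$ and every $\Ext^1$ contribution is non-negative; equality forces $\sum_k m_k^2=1$, hence $|K|=m_1=1$, which is exactly $S_0$.

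The main obstacle will be the second step: upgrading Luna's étale slice for the unframed GIT to a compatible slice for the framed moduli, so that the fibres of $\pi$ pull back faithfully from the slice model and the étale local triviality of $\pi^{-1}(S_\xi)\to S_\xi$ is established with a fibre that depends only on $\xi$. Once that geometric picture is secured, the dimension inequality reduces to the bookkeeping above in the Euler form, driven entirely by the non-negativity of $\dim\Ext^1$ between the stable summands.
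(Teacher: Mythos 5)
Your stratification and the projectivity argument match the paper's, and the Luna-slice picture is the right framework: the paper likewise passes to the local Ext-quiver $Q_\xi$ with vertices $i_1,\dots,i_s$, with $\delta_{kl}-(d^k,d^l)$ arrows from $i_k$ to $i_l$, local dimension vector $(m_k)$ and local framing $(f\cdot d^k)$. But your identification of the fibre is wrong, and that is where the whole difficulty of the theorem lives. The fibre $\pi^{-1}(x)$ consists of all framed representations $(W,v)$ whose underlying representation $W$ is S-equivalent to $V=\bigoplus_k E_k^{m_k}$, i.e.\ has associated graded isomorphic to $V$; it is not restricted to $W\cong V$ polystable. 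In the slice model this means the fibre is the framed moduli space over the entire nullcone $N_{d_\xi}\subset \Ext^1(V,V)=R_{d_\xi}(Q_\xi)$ (the locus of $Q_\xi$-representations all of whose Jordan--H\"older factors are vertex simples), not just over the origin. Hence
\[ \dim\pi^{-1}(x_\xi)=\dim N_{d_\xi}-\dim G_{d_\xi}+f_\xi\cdot d_\xi, \]
and your formula $f\cdot d-\sum_k m_k^2$ omits the term $\dim N_{d_\xi}$, which is strictly positive as soon as the $E_k$ admit nontrivial self- or mutual extensions (for the $m$-loop quiver with $x$ the zero representation of dimension $n$, the fibre is the nilpotent non-commutative Hilbert scheme, whose dimension vastly exceeds $fn-n^2$). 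A symptom of the error is that your final inequality is trivially true and never uses genericity of the stability condition, whereas the statement genuinely fails for non-generic $\zeta$.

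The missing ingredient is precisely the paper's nullcone estimate (Theorem \ref{nilpotent stack dimension}): for a \emph{symmetric} quiver, $\dim N_{d_\xi}-\dim G_{d_\xi}\le -\tfrac12(d_\xi,d_\xi)+\tfrac12\sum_k(i_k,i_k)(d_\xi)_{i_k}-|d_\xi|$, proved by covering $N_{d_\xi}$ by collapsings of homogeneous bundles over partial flag varieties indexed by thin filtration types. Genericity of $\zeta$ enters exactly here: it makes the Euler form symmetric on $\Lambda_\mu$, so $Q_\xi$ is symmetric and the estimate applies. Feeding this bound into the codimension computation reduces the claim to $\tfrac12\sum_k((d^k,d^k)-2)(m_k-1)\le\tfrac12(s-1)$, which holds because $(d^k,d^k)\le 1$ whenever $\Msp^{st}_{d^k}\ne\emptyset$, with equality forcing $s=1$, $m_1=1$. (There is also a small slip in your codimension formula: the coefficient of $\dim\Ext^1(E_k,E_k)$ should be $m_k^2-1$ rather than $m_k(m_k-1)$; but this is immaterial next to the fibre issue.)
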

The proof of this important technical result is postponed to section 5 to keep the introduction short.\\

Section 3 is devoted to intersection complexes and the Schur functor formalism. As we need a nontrivial Lefschetz  ``motive'' $\LL$, restricting to perverse sheaves is not sufficient. Hence, we have to consider mixed Hodge modules, but there is no reason to be worried about that. We only need that the Grothendieck group is freely generated as a $\ZZ[\LL^{\pm 1}]$-module by some sort of intersection complexes. The (relative) hard Lefschetz theorem and some weight estimation for virtually small maps will also play a role. \\
Taking direct sums of representations induces a symmetric monoidal tensor product on the category of mixed Hodge modules by convolution. Using some general machinery (see \cite{Deligne1}), one can introduce Schur (endo)functors. Among them the symmetric and alternating powers are the most famous ones, and we finally obtain a $\lambda$-ring structure on the Grothendieck group of mixed Hodge structures. \\

The latter is used in Section 4 to define  Donaldson--Thomas functions. We will relate Donaldson--Thomas functions to framed quiver representations my means of the so-called DT/PT correspondence proven in section 6. Using this, the virtual smallness of the Hilbert--Chow morphism and the (relative) hard Lefschetz theorem, we finally deliver the proof of our main theorem by comparing degrees of polynomials in $\ZZ[\LL^{\pm 1/2}]$.\\ 

While proving our main result in section 4, we will observe that a certain integrality condition is crucial. It turns out that this condition is a relative version of the famous integrality conjecture in Donaldson--Thomas theory. Fortunately, we can give a proof in our situation of quiver representations by reducing the problem to a result of Efimov (see \cite{Efimov}, Theorem 1.1). In fact, the arguments use only the cut and paste relation allowing us to generalize the setting to motivic functions and to arbitrary ground fields of characteristic zero. Here is the main result of the second part of our paper, that is, of section 6.

\begin{theorem}[Integrality Conjecture, relative version]  
For a generic stability condition and a not necessarily closed point $x\in \Msp^{ss}$ there is a finite extension $\KK\supset \kk(x)$ of the residue field of $x$ giving rise to a map $\tilde{x}:\Spec\KK\to \Msp^{ss}$ such that the ``value'' $\DTS^{mot}(\tilde{x}):=\tilde{x}^\ast \DTS^{mot}$ of the motivic Donaldson--Thomas function at $\tilde{x}$  is in the image of the natural map 
\[ \Ka_0(\Var/\KK)[\LL^{-1/2}] \longrightarrow \Ka_0(\Var/\KK)[\LL^{-1/2}, (\LL^r-1)^{-1}: r\ge 1].\] 
\end{theorem}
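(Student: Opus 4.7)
My plan is to reduce the relative statement to the absolute motivic integrality conjecture, which is Efimov's theorem \cite{Efimov}. The first step is to enlarge the residue field to a finite extension $\KK\supset\kk(x)$ over which the point corresponds to a polystable representation $V=\bigoplus_{k\in K}E_k^{m_k}$ with stable summands $E_k$ defined over $\KK$. Such an extension exists because the stable factors of the geometric polystable representation are parametrized by closed points of stable moduli of smaller dimension vectors, each of which becomes rational over a finite extension; the compositum of finitely many such extensions works.

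The second step is to employ a Luna-type étale slice theorem for the GIT presentation of $\Msp^{ss}$. This identifies an étale neighborhood of $\tilde x$ in $\Msp^{ss}$ with an étale neighborhood of the distinguished semisimple point in the moduli space of semistable representations of the Ext-quiver $Q^{\mathrm{Ext}}$ of the collection $(E_k)$, taken with trivial stability and dimension vector $(m_k)$. Since the DT function is constructed from the Hilbert--Chow morphism and the DT/PT correspondence of section 6, both of which are étale-local on the target, the pullback of $\DTS^{mot}$ under this identification is the motivic DT function attached to $(Q^{\mathrm{Ext}},(m_k))$. Therefore $\DTS^{mot}(\tilde x)$ equals the value of this Ext-quiver DT function at the distinguished semisimple point.

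The third step is to decompose this value, via the cut-and-paste relation applied to the (framed) Hilbert--Chow fibre over the origin, as a $\ZZ[\LL^{\pm 1/2}]$-linear combination of absolute motivic Donaldson--Thomas invariants for $(Q^{\mathrm{Ext}}, (n_k))$ with $n_k\le m_k$. Each such absolute invariant lies in $\Ka_0(\Var/\KK)[\LL^{-1/2}]$ by the motivic form of Efimov's theorem, so $\DTS^{mot}(\tilde x)$ does as well, establishing the theorem.

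The main obstacle I anticipate is the rigorous execution of the étale-local model in the second step at the motivic level: one has to verify that $\DTS^{mot}$ is truly étale-local on $\Msp^{ss}$ and that the Luna slice identifies it with the motivic DT function of the Ext-quiver, rather than with it up to some correction term. A secondary but non-trivial point is the extension of Efimov's original argument, phrased over $\CC$, to an arbitrary ground field of characteristic zero; as the excerpt remarks, this should go through because the argument ultimately relies only on the cut-and-paste relation and on no field-specific input.
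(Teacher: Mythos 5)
Your overall strategy -- pass to a finite extension over which the stable summands are defined, reduce to the Ext-quiver of the collection $(E_k)$, and invoke Efimov's theorem for symmetric quivers -- is exactly the strategy of the paper. But the mechanism you propose for the reduction, a Luna \'etale slice together with the assertion that $\DTS^{mot}$ is ``\'etale-local on the target,'' has a genuine gap, and it is precisely where all the work of section 6 lies. The motivic DT function is defined through $\Sym$, i.e.\ through the $\lambda$-ring structure coming from the monoid $(\Msp^{ss}_\mu,\oplus)$, and $\Sym$ of a motivic function is a \emph{global} operation: the symmetric powers involve direct sums whose summands range over the entire moduli space, not over a neighborhood of $x$. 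So there is no a priori reason why $\DTS^{mot}$ should pull back along an \'etale map to the DT function of the local model. The paper's substitute is Lemma \ref{lambda_pull_back}: the pullback $\iota_E^\ast$ along the specific \emph{monoid homomorphism} $\iota_E:\NN^s\times\Spec\KK\to\Msp^{ss}_\mu$ (classifying $\bigoplus_k E_k^{n_k}$ for varying multiplicities) commutes with the $\sigma^n$, because the comparison maps $u_n$ are closed embeddings bijective on geometric points. This is a much more rigid statement than \'etale-locality, and the paper explicitly warns (end of section 6) that naive motives do not descend along \'etale covers -- twisted forms of $PG_d$ or of $\PP^{fd-1}$ have different classes in $\Ka_0(\Var)$ -- which is exactly why the theorem is stated for a finite extension $\KK\supset\kk(x)$ rather than for $\kk(x)$ itself.

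The second missing ingredient is the identification of $\iota_E^\ast\DTS^{mot}_\mu$ with the DT series of the Ext-quiver. Even granting a local model, one must relate the stack $\Mst_E$ of representations with Jordan--H\"older factors among the $E_k$ to the stack $R_n(Q_\xi)/G_n$ of \emph{all} representations of the Ext-quiver; these are not isomorphic, and the paper proves instead the Ringel--Hall identity $f\ast g=1$ (via socle filtrations and the vanishing of alternating sums of $\LL$-binomial coefficients) in Proposition \ref{localDT}, which yields $\iota_E^\ast\DTS^{mot}_\mu=\DT^{mot}(Q_\xi)|_{\LL^{1/2}\mapsto\LL^{-1/2}}$ -- note the nontrivial duality substitution, which your step 3 does not account for. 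Your proposed decomposition of the value at the origin into a $\ZZ[\LL^{\pm1/2}]$-linear combination of absolute invariants $\DT^{mot}(Q^{\mathrm{Ext}})_{n}$ with $n_k\le m_k$ ``by cut-and-paste on the framed Hilbert--Chow fibre'' is not substantiated and is not what happens: the value is a single dualized DT invariant of $Q_\xi$ in degree $m$, and integrality then follows from Efimov. You correctly flagged the \'etale-locality issue as the main obstacle, but the proof requires replacing that heuristic by Lemma \ref{lambda_pull_back} and Proposition \ref{localDT}, not merely verifying it.
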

Ideally, we would like to replace $\KK$ with $\kk(x)$ and $\tilde{x}$ with $x$, but we have good reasons to belief that such a result cannot hold for ``naive'' motives.\\
Similar to the Hodge realization, the Donaldson--Thomas invariant $\DT^{mot}_d$ is a rational function in $\LL^{1/2}$ with integer coefficients. Moreover, the coefficients are independent of the ground field and remain the same in any ``realization'' of motives. Using our main result on intersection complexes, we get the famous integrality conjecture.
\begin{corollary}[Integrality Conjecture, absolute version] 
For a generic stability condition the motivic Donaldson--Thomas invariant $\DT^{mot}_d$ is in the image of the natural map 
\[ \Ka_0(\Var/\kk)[\LL^{-1/2}] \longrightarrow \Ka_0(\Var/\kk)[\LL^{-1/2}, (\LL^r-1)^{-1}: r\ge 1].\]
\end{corollary}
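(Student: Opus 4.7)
The plan is to deduce the absolute motivic integrality from the Hodge-theoretic main theorem, using the relative integrality theorem just stated as the essential geometric input.

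First, I would show that $\DT^{mot}_d$ can be written as a rational function $P(\LL^{1/2})/\prod_r (\LL^r - 1)^{n_r}$ with $P \in \ZZ[\LL^{\pm 1/2}]$. By virtual smallness of the Hilbert--Chow morphism and a Harder--Narasimhan / Luna-type stratification $\Msp^{ss}_d = \sqcup_\xi S_\xi$, the function $\DTS^{mot}$ is motivic-constructible, and its fiber value on each stratum lies --- by the relative integrality theorem applied at a geometric generic point and descended back to $\kk$ modulo torus factors --- in $\ZZ[\LL^{\pm 1/2}]$. Pushing to a point yields $\DT^{mot}_d = \sum_\xi [S_\xi]\cdot c_\xi$, and the standard fact that motivic classes of Luna strata in quiver moduli are rational in $\LL$ with cyclotomic denominators supplies the claimed shape.

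Next, I would apply the Hodge realization
\[ \Ka_0(\Var/\kk)[\LL^{-1/2},(\LL^r-1)^{-1}] \longrightarrow \Ka_0(\mathrm{MHS})[(\LL^r-1)^{-1}], \quad \LL \mapsto [\QQ(-1)], \]
under which $\DT^{mot}_d$ maps to the Hodge-theoretic $\DT_d$. By the main theorem, $\DT_d$ equals $[\IC_c(\Msp^{ss}_d,\QQ)]$ or $0$: in either case an honest polynomial in $[\QQ(\pm 1/2)]$ with non-negative integer coefficients, lying in the unlocalized Grothendieck ring of mixed Hodge structures.

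The proof then closes with a transcendence argument. The Tate object $[\QQ(-1)] \in \Ka_0(\mathrm{MHS})$ is algebraically independent over $\ZZ$: any nonzero integer polynomial in $[\QQ(\pm 1/2)]$ has a nonzero extremal-weight graded piece, and is in particular a non-zerodivisor. Hence $\ZZ[\LL^{\pm 1/2}] \hookrightarrow \Ka_0(\mathrm{MHS})[(\LL^r-1)^{-1}]$ is injective with image disjoint from the denominators, so the image $P([\QQ(-1/2)])/\prod_r ([\QQ(-r)] - [\QQ])^{n_r}$ can lie in the polynomial subring only if $\prod_r (\LL^r - 1)^{n_r}\mid P$ inside $\ZZ[\LL^{\pm 1/2}]$. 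Consequently $\DT^{mot}_d \in \ZZ[\LL^{\pm 1/2}] \subset \Ka_0(\Var/\kk)[\LL^{-1/2}]$, as required. The main obstacle is the first step: the descent from the finite extensions $\KK$ of residue fields back to $\kk$ genuinely uses the relative integrality theorem --- pointwise integrality alone is too weak to produce a global rational-function representation of the required shape.
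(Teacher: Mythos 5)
Your overall strategy --- first establish that $\DT^{mot}_d$ lies in the subring $\ZZ[\LL^{\pm 1/2}][(\LL^r-1)^{-1}: r\ge 1]$, then use the Hodge realization together with the main theorem and the faithfulness of the weight polynomial on that subring to force the denominators to cancel --- is the same as the paper's, and your second and third steps are essentially the paper's argument: $\DT^{mot}_d$ is identified with the weight polynomial of $\IC_c(\overline{\Msp^{st}_d},\QQ)$ by Theorem \ref{DT=IC}, which is an honest Laurent polynomial in $\LL^{1/2}$, so the rational function must be a Laurent polynomial. One point you skip here: the Hodge realization and Theorem \ref{DT=IC} live over $\CC$, while the corollary is asserted for arbitrary $\kk$ of characteristic zero, so you also need that the coefficients of $\DT^{mot}_d$, as a rational function in $\LL^{1/2}$, are independent of the ground field.

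The genuine gap is in your first step. You propose to obtain the rational-function shape by evaluating $\DTS^{mot}$ at generic points of the Luna strata, descending the values ``back to $\kk$ modulo torus factors'', and writing $\DT^{mot}_d=\sum_\xi [S_\xi]\,c_\xi$. This is precisely the route the paper explicitly rejects: Theorem \ref{intconjsv} only controls $i^\ast\DTS^{mot}$ after passing to a finite extension $\KK\supset\kk(x)$ of the residue field, equivalently on an \'etale cover $\tilde S_\kappa\to S_\kappa$ of each stratum (Corollary \ref{integrality_stratification}), and the end of Section 6 argues that descent to $\kk(x)$ itself is expected to fail for naive motives, because the relevant fibrations ($PG_d$-bundles, $\PP^{fd-1}$-fibrations) are only \'etale locally trivial and their twisted forms have genuinely different naive motives. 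Moreover, even granting constant fiber values, the identity $\DT^{mot}_d=\sum_\xi[S_\xi]\,c_\xi$ would require $\DTS^{mot}|_{S_\xi}=c_\xi\cdot[S_\xi\xrightarrow{\id}S_\xi]$ as a motivic function, which pointwise statements do not provide. The paper instead gets the rational-function shape with no integrality input at all: applying the integration map of Lemma \ref{integration_map} and $\dim_!$ to the Harder--Narasimhan recursion (Theorem 5.1 of \cite{Reineke_HN}) expresses $\DT^{mot}_d$ through the classes $\LL^{(d,d)/2}[R_d]/[G_d]\in\ZZ[\LL^{\pm 1/2}][(\LL^r-1)^{-1}:r\ge 1]$, which yields both the shape and the ground-field independence at once. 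With your first step replaced by this, the remainder of your argument goes through.
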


This result has been obtained by Efimov for representations of symmetric quivers  and trivial stability condition (see  \cite{Efimov}, Theorem 1.1). A very complicated proof of the integrality conjecture even for quivers with potential was sketched by Kontsevich and Soibelman (see \cite{KS2}, Theorem 10).\\

\textbf{Acknowledgments.} The main result of the paper was originally observed and conjectured by J.\ Manschot while doing some computations. The first author is very grateful to him for sharing his observations and his conjecture which was the starting point of this paper.  The authors would also like to thank V.\ Ginzburg, E.\ Letellier, M.\ Kontsevich and L.\ Migliorini for interesting discussions on the results of this paper and J\"org Sch\"urmann for answering patiently all  questions about mixed Hodge modules.

\section{Moduli spaces of quiver representations}

\subsection{Quiver representations}

We fix a field $\KK$ which might either be our ground field $\kk$ or, as in section 6, a not necessarily algebraic extension of the latter. Let $Q=(Q_0,Q_1,s,t)$ be a quiver consisting of a finite set $Q_0$ of vertices, a finite set $Q_1$ of arrows as well as source and target maps $s,t:Q_1\to Q_0$. To any quiver we associate its path algebra $\KK Q$. The underlying $\KK$-vector space is spanned by paths of arbitrary length with a path of length zero attached to every vertex. Multiplication on $\KK Q$ is given by $\KK$-linear extension of concatenating paths. Equivalently, one could think of $\KK Q$ as a $\KK$-linear category with set of objects $Q_0$ and $\Hom_{\KK Q}(i,j)$ being the $\KK$-vector space generated by all paths from $i$ to $j$. Again, composition is induced by $\KK$-linear extension of concatenation. \\
There is a second (dg-)algebra associated to $Q$, namely its Ginzburg algebra $\Gamma_\KK Q$. The underlying algebra is the path algebra $\KK Q^{ex}$ associated to the extended quiver $Q^{ex}=(Q_0, Q_1\sqcup Q_1^{op} \sqcup Q_0, s^{ex},t^{ex})$ obtained from $Q$ by adding to every arrow $\alpha:i\to j$ of $Q$ another arrow $\alpha^\ast:j \to i$ with opposite orientation, and a loop $l_i:i\to i$ for every vertex $i\in Q_0$. We make $\Gamma_\KK Q$ into a dg-algebra by introducing a grading such that $\deg(\alpha)=0, \deg(\alpha^\ast)=-1$, and $\deg(l_i)=-2$. The differential is uniquely determined by putting 
\[ d\alpha=d\alpha^\ast=0 \quad\mbox{ and }\quad dl_i=\sum_{\alpha:i \to j} \alpha^\ast \alpha - \sum_{\alpha:j \to i} \alpha \alpha^\ast. \]      
Again, we can think of $\Gamma_\KK Q$ as a dg-category with set of objects being $Q_0$. Moreover, $H^0(\Gamma_\KK Q)\cong \KK Q$ can be interpreted as a dg-category with zero grading and trivial differential. \\
By looking at dg-functors $V:\KK Q \longrightarrow \dgV_\KK$ and $W: \Gamma_\KK Q  \longrightarrow \dgV_\KK$ into the category of dg-vector spaces with finite dimensional total cohomology, we get two dg-categories with model structures and associated triangulated homotopy ($A_\infty$-)categories $D^b(\KK Q\rep)$ and $D^b(\Gamma_\KK Q\rep)$. Each has a bounded t-structure with heart $\KK Q\rep$ being the abelian category of quiver representations, that is, of functors $V:\KK Q \longrightarrow \Vect_\KK$ into the category of finite dimensional $\KK$-vector spaces. In particular, 
\[ \Ka_0( D^b(\KK Q\rep)) \cong \Ka_0 (D^b(\Gamma_\KK Q  \rep)) \cong \Ka_0 (\KK Q\rep). \]
There is a group homomorphism $\dim: \Ka_0(\KK Q \rep)\longrightarrow \ZZ^{Q_0}$ associating to every representation resp.\ functor $V$ the tuple $(\dim_\KK V_i)_{i\in Q_0}$ of dimensions of the vector spaces $V_i:=V(i)$. There are two pairings on $\ZZ^{Q_0}$ defined by \begin{eqnarray*}
(d,e)& := & \sum_{i\in Q_0} d_ie_i \; - \sum_{Q_1\ni \alpha:i\to j} d_ie_j \\
\langle d,e \rangle &:=& (d,e)\;-\;(e,d) 
\end{eqnarray*}
such that the pull-back of these pairings via $\dim$ is just the Euler pairing induced by $D^b(\KK Q\rep)$ resp.\ $D^b(\Gamma_\KK Q\rep)$. The skew-symmetry of the latter reflects the fact that $D^b(\Gamma_\KK Q\rep)$ is a 3-Calabi--Yau category, that is, the triple shift functor $[3]$ is a Serre functor. This provides the link to Donaldson--Thomas theory.

\subsection{Moduli spaces}
The stack of $Q$-representations, that is, of objects in $\KK Q\rep$, can be described quite easily. Fix a dimension vector $d=(d_i)\in \NN^{Q_0}$ and note that $G_d:=\prod_{i\in Q_0} \Aut_\KK(\KK^{d_i})$ acts on $R_d:=\prod_{\alpha:i\to j} \Hom_\KK(\KK^{d_i},\KK^{d_j})$ in a canonical way by simultaneous conjugation. The stack of $Q$-representations of dimension $d$ is just the quotient stack $\Mst_d=R_d/G_d$. There are also derived (higher) stacks of objects in $D^b(\KK Q\rep)$ resp.\ $D^b(\Gamma_\KK Q\rep)$ containing $\Mst_d$ as a substack, but we are not going into this direction. \\
Instead, we want to study semistable representations of $Q$. As the radical of the Euler pairing contains the kernel of $\dim:\Ka_0(\KK Q\rep)\longrightarrow \ZZ^{Q_0}$, every tuple $\zeta=(\zeta_i)_{i\in Q_0}\in \{r\exp(i\pi\phi)\in \mathbb{C}\mid r>0, 0<\phi\le 1\}^{Q_0}\subset \mathbb{C}^{Q_0}$ provides a numerical Bridgeland stability condition on $D^b(\KK Q\rep)$ and on $D^b(\Gamma_\KK Q \rep)$ with central charge $Z(V)=\zeta\cdot \dim V:=\sum_{i\in Q_0}\zeta_i\dim_\KK V_i$ of slope $\mu(V):=- \Re e Z(V)/ \Im m Z(V)$ and standard t-structure. Hence we get an open substack $\Mst^{ss}_d=R^{ss}_d/G_d$ of semistable $Q$-representations. 
For every $\mu\in (-\infty,+\infty]$ let $\Lambda_\mu\subset \NN^{Q_0}$ be the monoid of dimension vectors $d$ (including $d=0$) such that $\zeta\cdot d=\sum_{i\in Q_0}\zeta_id_i\in \CC$ has slope $\mu$. We call $\zeta$ $\mu$-generic if $\langle d,e\rangle =0$ for all $d,e\in \Lambda_\mu$, and generic if that holds for all $\mu$. The non-generic ``stability conditions'' $\zeta$ lie on a countable but locally finite union of walls in $\{r\exp(i\pi\phi)\in \mathbb{C}\mid r>0, 0<\phi\le 1\}^{Q_0}$ of real codimension one. Obviously every stability for a symmetric quiver is generic. Another important class is given by complete bipartite quivers and the maximally symmetric stabilities used in \cite{RW} to construct a correspondence between the cohomology of quiver moduli and the GW invariants of \cite{GPS}.\\    
As we wish to form moduli schemes, we should restrict ourselves to King stability conditions $\zeta=(-\theta_i + \sqrt{-1})_{i\in Q_0}$ for some $\theta=(\theta_i)\in \ZZ^{Q_0}$, giving rise to a linearization of the $G_d$ action on $R_d$ with semistable points $R^{ss}_d$. Let us denote the GIT quotient by $\Msp^{ss}_d= R^{ss}_d/\!\!/ G_d$. The points $x$ in $\Msp^{ss}_d$ correspond to polystable representations $V=\bigoplus_{k\in K} E_k$ defined over some finite extension of the residue field of $x$. The obvious morphism $p:\Mst^{ss}_d \longrightarrow \Msp^{ss}_d$ maps a semistable representation to the direct sum of its stable factors. We also have the open substack $\Mst^{st}_d\subset \Mst^{ss}_d$ of stable representations mapping to  the open subvariety $\Msp^{st}_d\subset \Msp^{ss}_d$ of stable representations. 
Note that $\Mst_d, \Mst^{ss}_d, \Mst^{st}_d,$ and $\Msp^{st}_d$ are smooth while $\Msp^{ss}_d$ is not. Moreover, $\Msp^{st}_d$ is either dense in $\Msp^{ss}_d$ or empty. We call $\theta$ ($\mu$-)generic if $\zeta=(-\theta_i + \sqrt{-1})_{i\in Q_0}$ is ($\mu$-)generic in the previous sense. \\
The construction of coarse moduli spaces can also be done for so-called geometric Bridgeland stability conditions, i.e.\ for $\zeta$  not lying on a (different) countable union of real codimension one walls. Indeed, given $\zeta$ and a dimension vector $d$, we can always perturb $\zeta$ slightly to $\zeta'$ with rational real and imaginary part  without changing $R^{ss}_d$. This is true because $R^{ss}_d$ will only change if $\zeta$ crosses a finite subset (depending on $d$) of these walls. Given $\zeta'=a+b\sqrt{-1}$ with $a,b\in \mathbb{Q}^{Q_0}$, we may define $\theta:=N\big((a\cdot d)\, b - (b\cdot d)\,a \big)$ with $N\gg 0$ such that $\theta\in \ZZ^{Q_0}$. Then, $\theta\cdot d=0$. Moreover, $\theta\cdot d'\le 0$ if and only if $\arg Z'(d')\le \arg Z'(d)$ if and only if $\arg Z(d')\le Z(d)$ for all nonzero dimension vectors $d'$ smaller than $d$. Hence, $R^{ss}_d$ is the open subset of semistable points in the GIT sense, and a categorical quotient $\Msp^{ss}_d:=R^{ss}_d/\!\!/G_d$ exists. As the latter satisfies a universal property, it does not depend on the choice of $\zeta'$ and $r\ge 1$. From now on, we will always assume that $\zeta$ is geometric so that moduli spaces exist.\\
We use the notation $\Msp^{ssimp}_d$ for the King stability condition $\theta=0$. Points in $\Msp^{ssimp}_d$ correspond to semisimple representations of dimension $d$. For every stability condition there is a projective morphisms $\Msp^{ss}_d\to \Msp^{ssimp}_d$ mapping  any (polystable) representation to the sum of its Jordan--H\"older factors taken in $\KK Q\rep$.\\

Given two dimension vectors $d,d'$, we denote with $R_{d,d'}$ the (linear) subvariety of $R_{d+d'}$ corresponding to representations which preserve the subspace $\KK^d\subset \KK^{d}\oplus \KK^{d'}=\KK^{d+d'}$. Similarly, $G_{d,d'}\subset G_{d+d'}$ is the subgroup preserving this subspace. Then, $\mathfrak{E}xact_{d,d'}=R_{d,d'}/G_{d,d'}$ is the stack of short exact sequences of representations with prescribed dimensions of the outer terms. There are morphisms $\pi_1\times \pi_2\times\pi_3: \mathfrak{E}xact_{d,d'}\longrightarrow \Mst_d\times\Mst_{d+d'}\times \Mst_{d'}$ mapping a sequence to the corresponding entry. Note that $\pi_2$ is the universal quiver Grassmannian for $Q$, hence representable and proper. In particular, $\mathfrak{E}xact_{d,d'}\cong Y_{d,d'}/G_{d+d'}$ for $Y_{d,d'}=R_{d,d'}\times_{G_{d,d'}} G_{d+d'}$. \\
Let us continue this section with a simple but important observation. Given a slope $\mu\in (-\infty,+\infty]$, the moduli stack $\Mst^{ss}_\mu:= \sqcup_{d\in \Lambda_\mu} \Mst^{ss}_d$, resp.\ the moduli space $\Msp^{ss}_\mu:= \sqcup_{d\in \Lambda_\mu} \Msp^{ss}_d$, is a commutative monoid in the category of stacks, resp.\ schemes, with respect to direct sums of representations. The unit is given by the zero-dimensional representation which is considered to be semistable with any slope. Obviously, the morphisms $p: \Mst^{ss}_\mu \longrightarrow \Msp^{ss}_\mu$ and $\dim:\Msp^{ss}_\mu \longrightarrow \Lambda_\mu$ mapping every polystable representation to its dimension vector are monoid homomorphism. 

\begin{lemma}
The morphism $\oplus:\Msp^{ss}_\mu\times \Msp^{ss}_\mu \longrightarrow \Msp^{ss}_\mu$ is finite. 
\end{lemma}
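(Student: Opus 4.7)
The plan is to verify that $\oplus$ is both proper and quasi-finite, and then invoke the noetherian form of Zariski's main theorem to conclude finiteness. Quasi-finiteness on closed points is the easy half: a closed point of $\Msp^{ss}_{d+d'}$ is a polystable representation $V=\bigoplus_k E_k^{n_k}$ with pairwise non-isomorphic stable summands $E_k$, and Krull--Schmidt forces every pair $(V_1,V_2)$ with $V_1\oplus V_2\cong V$ to be of the form $V_i\cong\bigoplus_k E_k^{n_{k,i}}$ with $n_{k,1}+n_{k,2}=n_k$, subject to the dimension constraints $\dim V_1=d$ and $\dim V_2=d'$; only finitely many such choices exist.

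For properness I would reduce to the semisimple case via the commutative square
\[ \xymatrix{ \Msp^{ss}_d\times\Msp^{ss}_{d'} \ar[r]^-{\oplus}\ar[d]_{p\times p} & \Msp^{ss}_{d+d'} \ar[d]^{p} \\
\Msp^{ssimp}_d\times\Msp^{ssimp}_{d'} \ar[r]^-{\oplus'} & \Msp^{ssimp}_{d+d'}, } \]
where $p$ is the projective Jordan--H\"older morphism recalled immediately before the lemma; commutativity expresses that semisimplification is additive under direct sums. Granting finiteness of the restricted map $\oplus'$, the composition $p\circ\oplus=\oplus'\circ(p\times p)$ is proper as a composite of the projective morphism $p\times p$ with the finite morphism $\oplus'$, and the separatedness of $p$ then forces $\oplus$ itself to be proper.

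The remaining task, and the technical heart of the argument, is to establish finiteness of $\oplus'$. Since both sides are affine, this amounts to showing that the pullback $\kk[R_{d+d'}]^{G_{d+d'}}\to\kk[R_d]^{G_d}\otimes\kk[R_{d'}]^{G_{d'}}$ along the block-diagonal embedding $R_d\times R_{d'}\hookrightarrow R_{d+d'}$ is a finite ring extension. By the Le Bruyn--Procesi description of quiver invariants, the source is generated as a $\kk$-algebra by trace functions $\mathrm{tr}_c$ attached to cyclic paths $c$ in $Q$, and restriction to block-diagonal representations obeys the additivity rule $\mathrm{tr}_c(V_1\oplus V_2)=\mathrm{tr}_c(V_1)+\mathrm{tr}_c(V_2)$. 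Combining this with Newton-type identities in characteristic zero applied to the families $\{\mathrm{tr}_{c^n}\}_n$ forces integrality of every generator of the target over the image. Geometrically this is the quiver analogue of the classical finiteness of the multiset-union morphism $\mathrm{Sym}^d X \times \mathrm{Sym}^{d'} X \to \mathrm{Sym}^{d+d'} X$, of degree $\binom{d+d'}{d}$.

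Assembling the pieces, $\oplus$ is proper and quasi-finite between noetherian schemes of finite type, hence finite. The only genuine work lies in the invariant-theoretic step for $\oplus'$; everything else is a formal consequence of Krull--Schmidt and the projectivity of the semisimplification $p$.
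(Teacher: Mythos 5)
Your argument is correct, and it reaches the crucial finiteness statement for the semisimple moduli by a genuinely different route than the paper. The opening moves coincide: quasi-finiteness from Krull--Schmidt, and reduction to $\oplus':\Msp^{ssimp}_d\times\Msp^{ssimp}_{d'}\to\Msp^{ssimp}_{d+d'}$ via the projective semisimplification maps (the paper leaves the cancellation ``$p\circ\oplus$ proper and $p$ separated $\Rightarrow$ $\oplus$ proper'' implicit; you make it explicit). Where you diverge is the heart of the matter. The paper factors $\oplus'$ as the closed immersion $\tilde{\sigma}_0$ (a section of $\tilde{\pi}_1\times\tilde{\pi}_3$) followed by $\tilde{\pi}_2$, and proves $\tilde{\pi}_2$ finite by a purely geometric mechanism: the universal quiver Grassmannian $\pi_2$ is proper, so its atlas-level pullback $Y_{d,d'}\to R_{d+d'}$ is proper with affine target, whence $\kk[R_{d+d'}]\to\kk[Y_{d,d'}]$ is finite, and the Reynolds operator transports integral equations to the invariant subrings. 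You instead use the Le Bruyn--Procesi generators directly: the target of $\kk[R_{d+d'}]^{G_{d+d'}}\to\kk[R_d]^{G_d}\otimes\kk[R_{d'}]^{G_{d'}}$ is generated by finitely many trace functions $\mathrm{tr}_c(V_1)$, $\mathrm{tr}_c(V_2)$, the image contains all sums $\mathrm{tr}_{c^n}(V_1)+\mathrm{tr}_{c^n}(V_2)$, and by Newton's identities (characteristic zero) the coefficients of the characteristic polynomial of the cycle operator of $c$ on $V_1\oplus V_2$ lie in the image; since the eigenvalues of that operator on $V_1$ are among its roots, each $\mathrm{tr}_{c^n}(V_1)$ is a sum of integral elements, hence integral, and a finitely generated algebra whose generators are integral is module-finite. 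This is sound. The trade-off: your argument is more elementary and explicit but leans on the specific trace description of quiver invariants and on characteristic zero; the paper's argument never touches generators of the invariant ring and would transfer to any setting where the analogue of $\pi_2$ is representable and proper (which is also why the same diagram reappears in their later Ringel--Hall computations). I would only ask you to spell out, as above, why ``Newton-type identities force integrality'' --- the one-line version hides the passage through the eigenvalues in an integral extension --- but there is no gap.
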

\begin{proof}
As the isomorphism types and multiplicities of the stable summands of a polystable object are unique, the morphism is certainly quasi-finite. It remains to show that $\oplus$ is proper. There is a commutative diagram
\[ \xymatrix @C=2cm { \Msp^{ss}_\mu\times \Msp^{ss}_\mu \ar[r]^\oplus \ar[d] & \Msp^{ss}_\mu \ar[d] \\ \Msp^{ssimp}_\mu \times \Msp^{ssimp}_\mu \ar[r]^\oplus & \Msp^{ssimp}_\mu }\]
with proper vertical maps. Hence, it suffices to show that $\oplus:\Msp^{ssimp}_\mu \times \Msp^{ssimp}_\mu \longrightarrow \Msp^{ssimp}_\mu$ is proper.
Consider the following commutative diagram
\[ \xymatrix { & \mathfrak{E} xact_{d,d'}\cong Y_{d,d'} /G_{d+d'} \ar@/_/[dl]_{\pi_1\times \pi_3} \ar[dr]^{\pi_2}  \ar[dd]^{\rho_{d,d'}} & \\ R_d/G_d \times R_{d'}/G_{d'}, \ar@/_/[ur]_{\sigma_0} \ar[dd]_{\rho_d\times \rho_{d'}} & & R_{d+d'}/G_{d+d'}  \ar[dd]^{\rho_{d+d'}} \\
& \Spec \kk[Y_{d,d'}]^{G_{d+d'}}  \ar@/_/[dl]_{\tilde{\pi}_1\times \tilde{\pi}_3} \ar[dr]^{\tilde{\pi}_2} &  \\ 
\Spec \kk[R_d]^{G_d} \times \Spec \kk[R_{d'}]^{G_{d'}} \ar@/_/[ur]_{\tilde{\sigma}_0} \ar[rr]_\oplus & & \Spec \kk[R_{d+d'}]^{G_{d+d'}} }  \]
with $Y_{d,d'}\cong R_{d,d'}\times_{G_{d,d'}}G_{d+d'}\cong \mathfrak{E} xact_{d,d'}\times_{\Mst_{d+d'}}R_{d+d'}$. Here, $\sigma_0$ maps a pair $(V,V')$ of representations to its direct sum $V\oplus V'$ providing a right inverse of $\pi_1\times \pi_3$. Thus, $\tilde{\sigma}_0$ is also a section providing a closed embedding. It remains to show that $\tilde{\pi}_2$ is proper. Note that $\hat{\pi}_2:Y_{d,d'}\longrightarrow R_{d+d'}$, being the pull-back of $\pi_2$, must be proper with Stein factorization $Y_{d,d'} \to \Spec\kk[Y_{d,d}] \to R_{d+d'}$ as $R_{d+d'}$ is affine. Thus, $\kk[R_{d+d'}]\longrightarrow \kk[Y_{d,d'}]$ is finite, hence integral. Applying the Reynolds operator of $\kk[Y_{d,d'}]$ to an integral equation for $a\in \kk[Y_{d,d'}]^{G_{d+d'}}$, we obtain that $\kk[R_{d+d'}]^{G_{d+d'}}\longrightarrow \kk[Y_{d,d'}]^{G_{d+d'}}$ is integral, too. Thus $\tilde{\pi}_2$ is finite, hence proper.
\end{proof}

For later applications we also need framed $Q$-representations (see \cite{Reineke1}). We fix a framing vector $f\in \NN^{Q_0}$ and consider representations of a new quiver $Q_f=(Q_0\sqcup\{\infty\}, Q_1\sqcup \{\beta_{l_i}:\infty \to i \mid i\in Q_0, 1\le l_i\le f_i \})$ with dimension vector $d'$ obtained by extending $d$ via $d_\infty=1$. We also extend $\zeta$ appropriately (see \cite{Reineke1}) and get a King stability condition $\zeta'$ for $Q_f$. Let $\Msp^{ss}_{f,d}$ be the moduli space of $\zeta'$-semistable $Q_f$-representations of dimension vector $d'$. It turns out that $\Msp^{ss}_{f,d}=\Msp^{st}_{f,d}$, and thus $\Msp^{ss}_{f,d}$ is smooth and $p_{f,d}:\Mst^{ss}_{f,d}\to \Msp^{ss}_{f,d}$  a principal bundle with structure group $P(G_d\times \GG_m)\cong G_d$. There is an obvious morphism $\pi:\Msp^{ss}_{f,d} \longrightarrow \Msp^{ss}_d$ obtained by restricting a $\zeta'$-(semi)stable representation of $Q_f$ to the subquiver $Q$ which turns out to be $\zeta$-semistable. The following theorem will we crucial for proving our main result. To keep the introduction short, we will postpone its proof to 
section 5.    

\begin{theorem} \label{virtsmall} Let $\mu$ be the slope of a dimension vector $d$ with respect to a stability condition $\zeta$. If $\zeta$ is $\mu$-generic, the morphism $\pi:\Msp^{ss}_{f,d} \longrightarrow \Msp^{ss}_d$ is projective and virtually small, that is, there is a finite stratification $\Msp^{ss}_d=\sqcup_\xi S_\xi$ with empty or dense stratum $S_0=\Msp^{st}_d$ such that $\pi^{-1}(S_\xi) \longrightarrow S_\xi$ is \'etale locally trivial and 
 \[ \dim \pi^{-1}(x_\xi) - \dim \PP^{f\cdot d-1} \le \frac{1}{2} \codim S_\xi\] 
for every $x_\xi\in S_\xi$ with equality only for $S_\xi=S_0\not=\emptyset$ with fiber $\pi^{-1}(x_0)\cong \PP^{f\cdot d-1}$.
\end{theorem}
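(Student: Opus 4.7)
The plan is to attack Theorem~\ref{virtsmall} in three steps: stratify $\Msp^{ss}_d$ by polystable isomorphism type, apply Luna's étale slice theorem to reduce each stratum to a framed Ext-quiver problem, and then verify the dimension inequality by exploiting the symmetry of the Ext-quiver forced by $\mu$-genericity. Projectivity of $\pi$ is essentially formal: $\Msp^{ss}_{f,d}$ is a GIT quotient of the closed $\zeta'$-semistable locus in $R_d\times\prod_i\Hom(\KK^{f_i},\KK^{d_i})$ by $G_d$, and the forgetful morphism descends from a $G_d$-equivariant projection whose fibers sit inside projective spaces of framings (after imposing $\zeta'$-semistability), so $\pi$ is projective.

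\textbf{Stratification and Luna reduction.} For each tuple $\xi = ((d_k)_{k\in K},(m_k)_{k\in K})$ with pairwise distinct $d_k\in\Lambda_\mu$, $m_k\ge 1$, and $\sum_k m_k d_k = d$, let $S_\xi\subset\Msp^{ss}_d$ be the locally closed stratum of polystable representations of type $\bigoplus_k E_k^{m_k}$ with pairwise non-isomorphic stable summands $E_k$ of dimension $d_k$. This yields a finite stratification; the stratum $S_0$ corresponding to $|K|=m_1=1$ equals $\Msp^{st}_d$, which is open and, if non-empty, dense. At any $x=[V]\in S_\xi$ with $V=\bigoplus_k E_k^{m_k}$, Luna's slice theorem produces an étale neighborhood of $x$ in $\Msp^{ss}_d$ isomorphic to an étale neighborhood of the origin in the affine quotient $\Msp^{ssimp}_m(Q^{\mathrm{ext}})$, where $Q^{\mathrm{ext}}$ has vertex set $K$ and $\dim\Ext^1_{\KK Q}(E_k,E_l)$ arrows from $k$ to $l$, with dimension vector $m=(m_k)$. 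Since $\dim\Hom(E_k,E_l)=\delta_{kl}$ for stables of the same slope and $(d_k,d_l)=(d_l,d_k)$ by $\mu$-genericity, the number of arrows from $k$ to $l$ equals $\delta_{kl}-(d_k,d_l)$ and $Q^{\mathrm{ext}}$ is symmetric. A parallel analysis on the framed side rests on the natural decomposition $\prod_i\Hom(\KK^{f_i},V_i) \cong \bigoplus_k \bigl(\bigoplus_i\Hom(\KK^{f_i},(E_k)_i)\bigr)\otimes\KK^{m_k}$, which exhibits a framing of $V$ as a framing of the Ext-quiver representation of dimension $m$ by the framing vector $g := (f\cdot d_k)_{k\in K}$. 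This identifies $\pi$ étale locally with the analogous framed Hilbert--Chow $\pi^{\mathrm{ext}}\colon\Msp^{ss}_{g,m}(Q^{\mathrm{ext}})\to\Msp^{ssimp}_m(Q^{\mathrm{ext}})$, giving étale local triviality of $\pi^{-1}(S_\xi)\to S_\xi$ and the identification $\pi^{-1}(x)\cong(\pi^{\mathrm{ext}})^{-1}(0)$.

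\textbf{Dimension inequality and main obstacle.} A direct computation with the arrow counts yields $(m,m)_{Q^{\mathrm{ext}}}=(d,d)$, and consequently $\dim\Msp^{ss}_{g,m}(Q^{\mathrm{ext}})=g\cdot m-(m,m)_{Q^{\mathrm{ext}}}=f\cdot d-(d,d)=\dim\Msp^{ss}_{f,d}$, while under Luna's identification $\codim_{\Msp^{ss}_d} S_\xi$ equals the local dimension of $\Msp^{ssimp}_m(Q^{\mathrm{ext}})$ at the origin. The desired inequality thus reduces to the assertion that, for the symmetric Ext-quiver $Q^{\mathrm{ext}}$ with framing $g$, the central fiber satisfies
\[
\dim(\pi^{\mathrm{ext}})^{-1}(0) - (f\cdot d - 1) \;\le\; \tfrac12\,\dim_0\Msp^{ssimp}_m(Q^{\mathrm{ext}}),
\]
with equality only when $m$ is concentrated at a single vertex with multiplicity $1$ (in which case the fiber is $\PP^{f\cdot d-1}$). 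I would tackle this by stratifying the central fiber according to the filtration of subrepresentations generated by successive images of the framing map, producing cell-like pieces bundled over products of smaller unframed moduli, and bounding each piece's dimension using the symmetry $\dim\Ext^1(E_k,E_l)=\dim\Ext^1(E_l,E_k)$ to extract the factor $\tfrac12$. I expect this last step --- controlling all components of the central fiber and ruling out equality outside $S_0$ --- to be the main obstacle, since it requires a delicate induction on filtrations reminiscent of Reineke's stratification arguments for quiver Grassmannians, and the borderline case analysis must show that equality forces $V$ itself to be stable.
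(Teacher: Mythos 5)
Your reduction is exactly the one the paper uses: the Luna stratification by polystable type, the passage to the symmetric local (Ext-)quiver $Q_\xi$ with $\delta_{kl}-(d^k,d^l)$ arrows, dimension vector $m=(m_k)$ and framing $(f\cdot d^k)_k$, and the identification of $\pi^{-1}(x)$ with the fiber of the local framed Hilbert--Chow map over the origin; the bookkeeping $(m,m)_{Q_\xi}=(d,d)$ is also correct. The problem is that the step you yourself flag as ``the main obstacle'' is the entire content of the theorem, and your sketch for it is not a proof. The paper's key input is a dimension estimate for the nullcone $N_m\subset R_m(Q_\xi)$ of a \emph{symmetric} quiver (Theorem \ref{nilpotent stack dimension}): $\dim N_m-\dim G_m\le -\frac{1}{2}(m,m)+\frac{1}{2}\sum_k(i_k,i_k)m_k-|m|$. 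This is proved by covering $N_m$ by the closed subvarieties $R_{m^*}$ of representations admitting a filtration with one-dimensional vertex-simple subquotients (``thin'' decompositions $m^*$), bounding each $R_{m^*}$ as the collapsing of a homogeneous bundle over a partial flag variety, and using symmetry to rewrite $\sum_{k<l}(m^l,m^k)=\frac{1}{2}(m,m)-\frac{1}{2}\sum_k(m^k,m^k)$, which makes the bound independent of the chosen thin filtration. Note that the framing plays no role in this estimate: the central fiber has dimension exactly $\dim N_m-\dim G_m+f_\xi\cdot m$, so the whole difficulty lives in the unframed nullcone. Your proposed stratification of the framed central fiber by filtrations generated by the framing vectors is a different and uncarried-out route, and it is not clear it produces the factor $\frac{1}{2}$, which in the paper comes precisely from the symmetry identity applied to thin filtrations.

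Even granting a fiber estimate, you have not done the final comparison. After substituting the nullcone bound and the formula $\codim S_\xi=-(d,d)+\sum_k(d^k,d^k)+1-s$, the desired inequality reduces to $\frac{1}{2}\sum_k\bigl((d^k,d^k)-2\bigr)(m_k-1)\le\frac{1}{2}(s-1)$. This holds because nonemptiness of $S_\xi$ forces $\Msp^{st}_{d^k}\neq\emptyset$, hence $(d^k,d^k)=1-\dim\Msp^{st}_{d^k}\le 1$, so the left-hand side is nonpositive while the right-hand side is nonnegative; equality forces $s=1$ and $m_1=1$, i.e.\ the stable stratum, with fiber $\PP^{f\cdot d-1}$. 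This short but essential computation, together with the borderline analysis you acknowledge but do not carry out, is missing from your argument.
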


Let us also introduce the notation $\Msp^{ss}_{f,\mu}:=\sqcup_{d\in \Lambda_\mu} \Msp^{ss}_{f,d}$ and $\Msp^{st}:= \sqcup_{0\neq d\in \NN^{Q_0}} \Msp^{st}_d$.

\section{Intersection complex}

\subsection{From perverse sheaves to mixed Hodge modules} 

The ground field in the next two sections will be $\kk=\CC$. In this section we recall some standard facts about perverse sheaves, intersection complexes and Schur functors. The interested reader will find more details in \cite{CaMi} and \cite{Saito1}. Let $X$ be a variety with quasiprojective connected components. We denote with $\Perv(X)$ resp.\ $\MHM(X)$ the abelian categories of perverse sheaves resp.\ mixed Hodge modules on $X$. There is a natural functor $\rat:\MHM(X) \longrightarrow \Perv(X)$ associating to every mixed Hodge module its underlying perverse sheaf. For a morphism $f:X\longrightarrow Y$ of finite type we get two pairs $(f^\ast,f_\ast), (f_!,f^!)$ of adjoint triangulated functors $f_\ast,f_!:D^b(\Perv(X)) \longrightarrow D^b(\Perv(Y))$ and $f^\ast,f^!:D^b(\Perv(Y)) \longrightarrow D^b(\Perv(X))$, and similarly for mixed Hodge modules,  satisfying Grothendieck's axioms of the four functor formalism. Moreover, the functor $\rat$ is compatible with these functors in the obvious way, and there 
are duality functors relating $f_\ast$ with $f_!$ and $f^\ast$ 
with $f^!$. We also mention that for each connected component $X_\alpha$ of $X$, the categories $\Perv(X_\alpha)$ and $\MHM(X_\alpha)$ are of finite length. Furthermore, there is an element $\TT$ of $\MHM(\CC)$, called the Tate object. Since $\MHM(\CC)$ acts on $\MHM(X)$, we get an exact autoequivalence on $D^b(\MHM(X))$, abusing notation also denoted with $\TT$, given by multiplication with $\TT$. It commutes with all four functors and 
satisfies $\rat\circ \TT = \rat$. In our case, $X$ will carry the structure of a commutative monoid with unit $0\in X$, and $\MHM(\CC)$ can be interpreted as the subcategory of mixed Hodge modules supported at $0$. The action of $\MHM(\CC)$ on $\MHM(X)$  is induced by the  convolution product on $\MHM(X)$ which we introduce later.      
The actions of $\TT$ and $\LL:=\TT[-2]$ on $\Ka_0(\MHM(X))$ coincide, making it into a $\ZZ[\LL^{\pm 1}]$-module. We denote with $\Ka_0(\MHM(X))[\LL^{-1/2}]$ the $\ZZ[\LL^{\pm 1/2}]$-module obtained by adjoining a square root of $\LL$. One can also categorify this, giving rise to a square root $\TT^{1/2}$ of $\TT$ in an enlarged abelian category of mixed Hodge motives. Then, $\LL^{-1/2}=\TT^{-1/2}[1]$, and one should interpret the multiplication with $\LL^{-1/2}$ as a refinement of the shift functor $[1]$ on $D^b(\Perv(X))$.

\subsection{Intersection complex}
Given a closed equidimensional subvariety $Z\subset X$ and a local system on a dense open subset $Z^o$ of the regular part $Z_{reg}$ of $Z$, there is canonical perverse sheaf $\ICS_Z(L)$ on $X$, called the $L$-twisted intersection complex of $Z$, such that $\ICS_Z(L)|_{Z^o}=L[\dim Z]$. If $Z$ and $L$ are irreducible, $\ICS_Z(L)$ is an irreducible object of $\Perv(X)$, and all irreducible objects are obtained in this way. For $\MHM(X)$, there is a similar construction, with $L$ replaced with a (graded) polarizable, admissible
variation of (mixed) Hodge structures $L$ with quasi-unipotent monodromy at ``infinity''. We will, however, use the slightly non-standard normalization $\ICS_Z(L)|_{Z^o}=\LL^{-\dim Z/2}L$ with the convention that $\rat(L)$ is the unshifted local system given by $L$. As $\rat(\LL^{-\dim Z/2})=\QQ[\dim Z]$, the usual shift  in the de Rham functor is not lost but ``absorbed'' by the normalization factor. Note that an irreducible variation of mixed Hodge structures is pure, and application of $\TT^{-1/2}$ reduces the weight by one. If $Z$ has several connected components of different dimension, the construction of $\ICS_Z(L)$ generalizes accordingly. Applying this to the trivial variation $\QQ$ of pure Hodge structures of type $(0,0)$ on 
$Z_{reg}$, we obtain a distinguished intersection complex $\ICS_Z(\QQ)$.       

\subsection{Schur functors}
Let us now specialize to $X=\Msp^{ss}_\mu$, although everything in this section remains true for arbitrary commutative monoids $(X,\oplus,0)$ in the category of varieties with quasiprojective connected components such that $\oplus:X\times X \longrightarrow X$ is finite. Due to the last property, the higher derived direct images $R^i\oplus_\ast$ vanish, and we obtain a symmetric monoidal tensor product
\[ \otimes: \MHM(\Msp^{ss}_\mu)\times \MHM(\Msp^{ss}_\mu) \longrightarrow \MHM(\Msp^{ss}_\mu), \quad \EE\otimes \FF:=\oplus_\ast(\EE\boxtimes \FF), \]
and similarly for $\Perv(\Msp^{ss}_\mu)$. The unit $\unit$ is given by $\ICS_{\Msp^{ss}_0}(\QQ)$, which is a skyscraper sheaf of rank one supported at the zero-dimensional representation $0$. More details can be found in \cite{Schuermann}. We drop the $\otimes$-sign when dealing with the associated Grothendieck groups $\Ka_0(\Perv(\Msp^{ss}_\mu))$ and $\Ka_0(\MHM(\Msp^{ss}_\mu))$, respectively. \\
Given $\EE\in \MHM(\Msp^{ss}_\mu)$ and $n\in \NN$, the mixed Hodge module $\EE^{\otimes n}$ carries a natural action of the symmetric group $S_n$. By general arguments (see \cite{Deligne1}), we obtain a decomposition 
\[ \EE^{\otimes n} =  \bigoplus\limits_{\lambda \dashv n} W_\lambda \otimes S^\lambda(\EE) \]
for certain mixed Hodge modules $S^\lambda(\EE)$, where $W_\lambda$ denotes the irreducible representation of $S_n$ associated to the partition $\lambda$ of $n$. The tensor product used on the right hand side can be defined for every additive category, and should not be confused with the tensor product explained above. However, after identifying vector spaces $W$ with trivial variations of pure Hodge structures of type $(0,0)$ over $\Msp^{ss}_0$, both tensor products agree. The decomposition is functorial,  giving rise to Schur functors $S^\lambda:\MHM(\Msp^{ss}_\mu) \longrightarrow \MHM(\Msp^{ss}_\mu)$ for every partition $\lambda$. The same construction also applies to $\Perv(\Msp^{ss}_\mu)$, and $\rat:\MHM(\Msp^{ss}_\mu)\longrightarrow \Perv(\Msp^{ss}_\mu)$ ``commutes'' with Schur functors of the same type.    
\begin{example} \rm \quad 
\begin{enumerate}
 \item For $\lambda=(n)$, the representation $W_\lambda$ is the trivial representation of $S_n$ and $S^\lambda(\EE)=:\Sym^n(\EE)$. If $\EE|_{\Msp^{ss}_0}=0$, we get  $\Sym^n(\EE)|_{\Msp^{ss}_d}=0$ for every $d\in \Lambda_\mu$ provided $n\gg 0$. In particular, $\Sym(\EE)=\oplus_n \Sym^n(\EE)$ is well-defined. 
  \item For $\lambda=(1,\ldots,1)$, the representation $W_\lambda$ is the sign representation of $S_n$ and $S^\lambda(\EE)=:\Alt^n(\EE)$. As before $\Alt(\EE)=\oplus_n \Alt^n(\EE)$ is well-defined provided $\EE|_{\Msp^{ss}_0}=0$. 
\end{enumerate}
\end{example}
The following proposition is a standard result. 
\begin{proposition}
 Let $\EE,\FF$ be in $\MHM(\Msp^{ss}_\mu)$ or in $\Perv(\Msp^{ss}_\mu)$ such that $\EE|_{\Msp^{ss}_0}=\FF|_{\Msp^{ss}_0}=0$. Denote with $\Part$ be the set of all partitions of arbitrary size. Then
 \begin{eqnarray}
  \Sym(\EE \oplus \FF) & = & \Sym(E) \otimes \Sym(F) , \;\mbox{ in particular} \nonumber\\
  \Sym^n(\EE \oplus \FF) & = & \bigoplus\limits_{i+j=n} \Sym^i(\EE)\otimes \Sym^j(\FF),\;\mbox{and} \label{eqn1} \\
  \Sym (\EE \otimes \FF) & = & \bigoplus\limits_{\lambda \in \Part } S^\lambda(\EE)\otimes S^\lambda(\FF), \;\mbox{ in particular} \nonumber  \\
  \Sym^n(\EE\otimes \FF) & = & \bigoplus\limits_{\lambda \dashv n} S^\lambda(\EE)\otimes S^\lambda(\FF). \label{eqn2}
 \end{eqnarray}
\end{proposition}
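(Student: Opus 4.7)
The identities are purely formal consequences of the symmetric monoidal $\QQ$-linear pseudo-abelian structure on $\MHM(\Msp^{ss}_\mu)$, once one knows that the convolution product $\otimes=\oplus_\ast(-\boxtimes -)$ is exact and that the Schur functors $S^\lambda$ are defined via the primitive idempotents in $\QQ[S_n]$ acting on $\EE^{\otimes n}$. The category $\MHM(\Msp^{ss}_\mu)$ is abelian (hence pseudo-abelian) and $\QQ$-linear; biexactness of $\otimes$ follows from the finiteness of $\oplus$ proved in the preceding lemma, which in particular ensures $R^i\oplus_\ast=0$ for $i>0$. The hypothesis $\EE|_{\Msp^{ss}_0}=\FF|_{\Msp^{ss}_0}=0$ is only invoked to guarantee that $\Sym(\EE)$, $\Sym(\FF)$, and $\Sym(\EE\oplus\FF)$ exist as objects in each degree $d\in\Lambda_\mu$, since the restriction to any fixed $\Msp^{ss}_d$ of $\Sym^n(\EE\oplus\FF)$ vanishes for $n\gg 0$ by dimension-vector additivity of convolution.

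Granted this, the plan is to establish both identities at the level of the $S_n$-equivariant $n$-fold tensor powers and then pass to the trivial isotypic component. For (\ref{eqn1}), distributivity of $\otimes$ over direct sums writes $(\EE\oplus\FF)^{\otimes n}$ as the sum, over all subsets $I\subseteq\{1,\ldots,n\}$, of the tensor product with $\EE$ inserted at positions $I$ and $\FF$ at the remaining positions. Grouping by $|I|=i$, the group $S_n$ acts transitively on each block of summands with stabiliser the Young subgroup $S_i\times S_{n-i}$, producing the $S_n$-equivariant decomposition
\[ (\EE\oplus\FF)^{\otimes n} \;\cong\; \bigoplus_{i+j=n}\mathrm{Ind}_{S_i\times S_j}^{S_n}\bigl(\EE^{\otimes i}\otimes \FF^{\otimes j}\bigr). \]
Applying the trivial isotypic projector and invoking Frobenius reciprocity, together with exactness of the invariants functor in characteristic zero, yields the formula for $\Sym^n(\EE\oplus\FF)$, and the multiplicative identity follows by summing over $n$.

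For (\ref{eqn2}), the symmetry constraint of $\otimes$ provides an $S_n$-equivariant isomorphism $(\EE\otimes\FF)^{\otimes n}\cong \EE^{\otimes n}\otimes\FF^{\otimes n}$, with $S_n$ acting diagonally on the right. Substituting the Schur--Weyl decomposition $\EE^{\otimes n}=\bigoplus_{\lambda\dashv n}W_\lambda\otimes S^\lambda(\EE)$ on each factor gives
\[ (\EE\otimes\FF)^{\otimes n} \;\cong\; \bigoplus_{\lambda,\mu\dashv n}(W_\lambda\otimes W_\mu)\otimes S^\lambda(\EE)\otimes S^\mu(\FF), \]
and the orthogonality relation $(W_\lambda\otimes W_\mu)^{S_n}\cong \Hom_{S_n}(W_\mu,W_\lambda)\cong \delta_{\lambda\mu}\QQ$ (self-duality of complex $S_n$-representations combined with Schur's lemma) extracts the claimed formula for $\Sym^n(\EE\otimes\FF)$. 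Summing over $n$ and applying (\ref{eqn1}) to collect the bigraded pieces gives the full $\Sym(\EE\otimes\FF)$ identity.

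The only real point to verify is that the associator and braiding of the monoidal structure intertwine all of these $S_n$-actions coherently, so that the representation-theoretic manipulations above are literal isomorphisms in $\MHM(\Msp^{ss}_\mu)$ rather than merely identities at the level of characters or Grothendieck classes. This is the main (mild) obstacle; it is handled uniformly by the general Schur-functor formalism of \cite{Deligne1} applied to any $\QQ$-linear symmetric monoidal pseudo-abelian category with biexact tensor product, and the only geometric input actually needed is the exactness of convolution provided by the finiteness of $\oplus$.
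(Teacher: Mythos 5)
Your proposal is correct, and it is exactly the standard argument the paper is implicitly invoking: the paper offers no proof (it labels the proposition ``a standard result'' and relies on the Schur-functor formalism of \cite{Deligne1}), and your derivation of (\ref{eqn1}) via the induced-module decomposition of $(\EE\oplus\FF)^{\otimes n}$ with Frobenius reciprocity, and of (\ref{eqn2}) via the Cauchy identity $(W_\lambda\otimes W_\mu)^{S_n}\cong\delta_{\lambda\mu}\QQ$, is the canonical way to fill that in. You also correctly identify the only genuine inputs needed here: exactness of the convolution product (from finiteness of $\oplus$) and the $\QQ$-linear pseudo-abelian structure allowing the isotypic projectors to split.
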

Equations (\ref{eqn1}) and (\ref{eqn2}) are of course also true without the additional assumptions on $\EE$ and $\FF$. The next result is also well-known.
\begin{proposition} \label{lambda-ring}
 The Schur functors $S^\lambda$ induce well defined operations on the Grothendieck groups $\Ka_0(\Perv(\Msp^{ss}_\mu))$ and $\Ka_0(\MHM(\Msp^{ss}_\mu))$, respectively, satisfying the analogs of equation (\ref{eqn1}) and (\ref{eqn2}). In particular, both Grothendieck groups carry the structure of a (special) $\lambda$-ring. 
\end{proposition}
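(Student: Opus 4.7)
The essential content of the statement is that the (manifestly non-additive) functors $S^\lambda$ descend to well-defined operations on the Grothendieck group and continue to satisfy the analogues of equations (\ref{eqn1}) and (\ref{eqn2}). Since (\ref{eqn1}) already handles direct sums, the remaining task is to check additivity on short exact sequences: for any SES $0\to \EE'\to \EE\to \EE''\to 0$ in $\MHM(\Msp^{ss}_\mu)$ (or in $\Perv(\Msp^{ss}_\mu)$), one needs
\[ [S^\lambda(\EE)] \;=\; [S^\lambda(\EE'\oplus \EE'')] \qquad \text{in } \Ka_0. \]

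The plan is the classical filtration argument. Since $\oplus$ is finite by the preceding lemma, the convolution $\EE\otimes\FF=\oplus_\ast(\EE\boxtimes \FF)$ is exact in each variable. For an SES as above I would equip $\EE^{\otimes n}$ with the $S_n$-equivariant decreasing filtration whose $k$-th step $F^k \EE^{\otimes n}$ is the sum of all images of tensor products in which at least $k$ of the factors lie in $\EE'$. Exactness of $\otimes$ identifies the subquotients as
\[ F^k\EE^{\otimes n}/F^{k+1}\EE^{\otimes n} \;\cong\; \operatorname{Ind}_{S_k\times S_{n-k}}^{S_n}\bigl((\EE')^{\otimes k}\otimes(\EE'')^{\otimes(n-k)}\bigr), \]
so as an $S_n$-equivariant object the associated graded of $\EE^{\otimes n}$ is precisely $(\EE'\oplus\EE'')^{\otimes n}$. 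Since $\QQ[S_n]$ is semisimple in characteristic zero, the functor ``$W_\lambda$-isotypic component'' is exact on $S_n$-equivariant objects, and applying it to the filtration yields $[S^\lambda(\EE)]=[S^\lambda(\EE'\oplus\EE'')]$ in $\Ka_0$. Equations (\ref{eqn1}) and (\ref{eqn2}) then pass to the Grothendieck group verbatim.

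Given well-definedness, the $\lambda$-ring structure is formal: set $\lambda^n:=[\Alt^n(-)]$, and note that the alternating analogue of (\ref{eqn1}),
\[ \Alt^n(\EE\oplus\FF) \;=\; \bigoplus_{i+j=n}\Alt^i(\EE)\otimes \Alt^j(\FF), \]
is exactly the key axiom $\lambda^n(x+y)=\sum_{i+j=n}\lambda^i(x)\lambda^j(y)$. That the resulting $\lambda$-ring is \emph{special} --- i.e.\ that $\lambda^n(xy)$ and $\lambda^m(\lambda^n(x))$ are given by the universal plethysm polynomials in the $\lambda^k$'s --- follows automatically for Grothendieck rings of abelian symmetric $\QQ$-linear tensor categories whose $\lambda$-operations come from Schur functors (cf.\ \cite{Deligne1}); equation (\ref{eqn2}) is the archetypal instance of such a plethysm relation.

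The main obstacle is the filtration step: producing the $S_n$-equivariant filtration, checking its stability, and correctly identifying the associated graded via exactness of $\otimes$. This is routine but delicate in an abelian $\otimes$-category; once carried out, the rest of the proof is bookkeeping in $\lambda$-ring theory. The argument uses nothing specific to $\Msp^{ss}_\mu$ beyond finiteness of the monoid structure.
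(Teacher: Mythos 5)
The paper offers no proof of this proposition---it is explicitly flagged as ``well-known''---so there is no argument of the authors' to compare against; the relevant question is only whether your argument is sound, and it is. Your proof is the standard one: reduce well-definedness to $[S^\lambda(\EE)]=[S^\lambda(\EE'\oplus\EE'')]$ for a short exact sequence, filter $\EE^{\otimes n}$ $S_n$-equivariantly by the number of tensor factors lying in $\EE'$, identify the associated graded with $(\EE'\oplus\EE'')^{\otimes n}$ using bi-exactness of the convolution product (which indeed holds here: $\oplus$ is finite, so $\oplus_\ast$ is $t$-exact on $\Perv$ and $\MHM$, and $\boxtimes$ is exact in each variable), and then pass to $W_\lambda$-isotypic components, which is exact by semisimplicity of $\QQ[S_n]$ in characteristic zero. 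The claim of specialness is likewise correctly attributed to the Cauchy-type identity (2) together with the universal plethysm identities for Schur functors, as in the formalism of \cite{Deligne1}.
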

It is worth to mention the following technical detail. Although $\Sym(\EE)=\oplus_n \Sym^n(\EE)$ by definition, this equation cannot hold on the level of Grothendieck groups as we do not have infinite sums. To define these, we need to complete the Grothendieck groups as follows. Let $F^p\subset \Ka_0(\Perv(\Msp^{ss}_\mu))$ be the subgroup generated by all perverse sheaves $\EE$ such that $\EE|_{\Msp^{ss}_d}=0$ if $d$ cannot be written as a sum of $p$ nonzero dimension vectors, i.e.\ $|d|:=\sum_{i\in Q_0}d_i <p$. It is easy to these that $F^pF^q\subset F^{p+q}$  and $S^\lambda(F^p)\subset F^{np}$ for all $\lambda\dashv n$ and all $n,p,q\in \NN$. Hence, the $F^p$ provide a $\lambda$-ring filtration, and the corresponding completion $\underline{\Ka}_0(\Perv(\Msp^{ss}_\mu))=\prod_{d\in \Lambda_\mu}\Ka_0(\Perv(\Msp^{ss}_d))$ has a well defined ring structure and action of $S^\lambda$. Moreover, $\sum_n \Sym^n(\EE)$ is well-defined and agrees with the class of $\Sym(\EE)$ for $\EE\in F^1$. The completion of $\Ka_0(\MHM(\Msp^{ss}_\mu))$ is done in the same way. \\
As $\TT=\LL$ in $\Ka_0(\MHM(\CC))$ and $\Sym^n(\TT^{\pm 1})=\TT^{\pm n}$, the $\lambda$-ring structure of Proposition \ref{lambda-ring} can be extended to $\Ka_0(\MHM(\Msp^{ss}_\mu))[\LL^{-1/2}]$, and even to 
\[ \Ka_0(\MHM(\Msp^{ss}_\mu))[\LL^{-1/2}, (\LL^r-1)^{-1}: r\ge 1]=\]
\[= \Ka_0(\MHM(\Msp^{ss}_\mu))\otimes_{\ZZ[\LL^{\pm 1}]} \ZZ[\LL^{-1/2}, (\LL^r-1)^{-1}: r\ge 1] \] such that 
\[ S^\lambda(\LL^{\pm 1/2})= \begin{cases} \LL^{\pm n/2} & \mbox{ for }\lambda=(1,\dots,1), \\ 0 & \mbox{ otherwise. } \end{cases} \]
Again, we consider the filtration $F^p[\LL^{-1/2}]$, resp.\ $F^p[L^{-1/2},(\LL^r-1)^{-1}: r\ge 1]$, defined accordingly and perform a completion as before. By abusing notation let us denote the resulting $\lambda$-ring with 
\[ \underline{\Ka}_0(\MHM(\Msp^{ss}_\mu))[\LL^{-1/2}, (\LL^r-1)^{-1}: r\ge 1] := \]
\[= \prod_{d\in \Lambda_\mu} \Bigl( \Ka_0(\MHM(\Msp^{ss}_d))\otimes_{\ZZ[\LL^\pm]}[\LL^{-1/2},(\LL^r-1)^{-1}:r\ge 1] \Bigr) \]
which should not be confused with 
\[ \Bigl( \prod_{d\in \Lambda_\mu} \Ka_0(\MHM(\Msp^{ss}_\mu))\Bigr) \otimes_{\ZZ[\LL^{\pm 1}]} \ZZ[\LL^{-1/2}, (\LL^r-1)^{-1}: r\ge 1].\] 
\begin{remark} \rm One reason for adjoining $\LL^{\pm 1/2}$ and our convention for intersection complexes is to symmetrize weight polynomials under Poincar\'{e} duality. Our choice of extending $S^\lambda$ is done in such a way that $\TT^{1/2}$ is again a line element. The various completions are needed in the next section when we pass to stacks and define Donaldson--Thomas invariants.
\end{remark}
The following result illustrates the nice behavior of intersection complexes with respect to Schur functors.
\begin{proposition}
Given a dimension vector $d$ with $\Msp^{st}_d\not=\emptyset$ and a natural number $n$, let us denote with $\Delta$ and $\tilde{\Delta}$ the big diagonal in $\Sym^n \Msp^{st}_d \subset \Msp^{ss}_{nd}$ and $(\Msp^{st}_d)^n$ respectively. For an irreducible representation $W_\lambda$ of $S_n$ denote with $\underline{W}_\lambda$ the variation of Hodge structure of type 
$(0,0)$ on $\Sym^n\Msp^{st}_d\setminus \Delta$ given by $\bigl((\Msp^{st}_d)^n\setminus \tilde{\Delta}\bigr) \times_{S_n} W_\lambda$. Then
\begin{equation} \label{eqn3} S^\lambda \bigl( \ICS_{\Msp_d^{ss}}(\QQ) \bigr) = \ICS_{Z_n}(\underline{W}_{\lambda^\ast})
\end{equation}
with $\lambda^\ast$ being the conjugate partition of $\lambda$ if $\dim \Msp^{st}_d=1-(d,d)$ is odd and $\lambda^\ast=\lambda$ if $\dim\Msp^{st}_d$ is even. Moreover, $Z_n$ is the irreducible closed image of $\oplus:(\Msp^{ss}_d)^n\to \Msp^{ss}_{nd}$.
\end{proposition}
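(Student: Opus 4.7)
The plan is to reduce identity~\eqref{eqn3} to a computation on a dense open subset of $Z_n$ and then to appeal to the uniqueness of intermediate extensions. Three ingredients will make this work: the finiteness of the sum map (from the previous Lemma), the resulting smallness of the $n$-fold sum $\oplus_n$, and a careful bookkeeping of Koszul signs.

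First I would observe that, by iterated application of the Lemma, the $n$-fold sum morphism $\oplus_n:(\Msp^{ss}_d)^n\to \Msp^{ss}_{nd}$ is finite and projective with closed image $Z_n$. Since finite morphisms have zero-dimensional fibres while every proper stratum of $Z_n$ has positive codimension, $\oplus_n$ is small in the Goresky--MacPherson sense. Combined with the K\"unneth identification $\ICS_{(\Msp^{ss}_d)^n}(\QQ)=\ICS_{\Msp^{ss}_d}(\QQ)^{\boxtimes n}$, which uses that $\Msp^{ss}_d$ is irreducible because $\Msp^{st}_d\neq\emptyset$ is a smooth dense open, the standard consequence of smallness then gives
\[ \ICS_{\Msp^{ss}_d}(\QQ)^{\otimes n}\,=\,\oplus_{n,\ast}\bigl(\ICS_{\Msp^{ss}_d}(\QQ)^{\boxtimes n}\bigr)\,=\,\ICS_{Z_n}(L)\]
for a pure variation $L$ on a smooth open $U\subset Z_n$ determined by the generic behaviour of $\oplus_n$.

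Next I would compute $L$ together with its induced $S_n$-action by restricting to $V:=(\Msp^{st}_d)^n\setminus\tilde{\Delta}$. There $\ICS^{\boxtimes n}$ is the normalized constant variation $\LL^{-n\dim\Msp^{st}_d/2}\QQ_V$ and $V\to U=V/S_n$ is an \'etale $S_n$-torsor, so naively the pushforward decomposes as $\bigoplus_\mu W_\mu\otimes\underline{W}_\mu$. However, the $S_n$-action coming from the symmetric monoidal structure on $\Perv(\Msp^{ss}_\mu)$ differs from this naive permutation action by a Koszul sign: a transposition swapping two factors of cohomological parity $\dim\Msp^{st}_d$ introduces the scalar $(-1)^{\dim\Msp^{st}_d}$. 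Hence the $W_\lambda$-isotypic summand for the symmetric-monoidal action coincides with the $W_\lambda\otimes\mathrm{sgn}^{\dim\Msp^{st}_d}=W_{\lambda^\ast}$-isotypic summand for the naive action, giving $S^\lambda(\ICS_{\Msp^{ss}_d}(\QQ))|_U=\underline{W}_{\lambda^\ast}$ with the stated convention on $\lambda^\ast$.

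Combining the two steps, each Schur-isotypic summand $S^\lambda(\ICS_{\Msp^{ss}_d}(\QQ))$ is an $S_n$-equivariant direct summand of the intersection complex $\ICS_{Z_n}(L)$, hence is itself an intersection complex on $Z_n$ whose local system on $U$ has just been identified as $\underline{W}_{\lambda^\ast}$; this yields~\eqref{eqn3}. The main obstacle is the Koszul sign analysis in the second step, since it is the source of the dichotomy between $\lambda$ and $\lambda^\ast$: one has to match the abstract symmetric monoidal structure on $\oplus_\ast(\bullet\boxtimes\bullet)$ with the naive $S_n$-action on the pushforward of the constant sheaf along a finite \'etale Galois cover. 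A secondary technical point is checking that the finite morphism $\oplus_n$ is small in the appropriate sense, so that the pushforward of an intersection complex remains an intersection complex rather than merely a semisimple perverse Hodge module with possibly lower-dimensional simple summands.
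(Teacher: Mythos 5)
Your proposal is correct and follows essentially the same route as the paper: smallness of the finite sum map gives $\oplus_\ast(\ICS^{\boxtimes n})=\ICS_{Z_n}(L)$, the generic local system is computed from the \'etale $S_n$-torsor $(\Msp^{st}_d)^n\setminus\tilde{\Delta}\to \Sym^n\Msp^{st}_d\setminus\Delta$, and the $\lambda$ versus $\lambda^\ast$ dichotomy comes from exactly the Koszul sign you identify (the paper encodes it as the sign representation carried by the normalization factor $\LL^{-n\dim\Msp^{st}_d/2}$, and uses the $S_n$-bimodule decomposition of $H^0(S_n,\QQ)$ to separate the monoidal action from the monodromy). The only detail the paper adds is an appeal to Zariski's main theorem to identify the smooth open image $Z_n^o$ with $\Sym^n\Msp^{st}_d\setminus\Delta$ as schemes, which you assume implicitly when writing $U=V/S_n$.
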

\begin{proof}
Since $\oplus:(\Msp^{ss}_d)^n\longrightarrow \Msp^{ss}_{nd}$ is a small map,  $\ICS_{\Msp^{ss}_d}(\QQ)^{\otimes n}=\oplus_\ast \bigl(\ICS_{\Msp^{ss}_d}(\QQ)^{\boxtimes n}\bigr)=\ICS_{Z_n}(L)$ for a suitable variation of Hodge structures $L$ on the open smooth image $Z^o_n$ of  $(\Msp^{st}_d)^n\setminus \tilde{\Delta} \longrightarrow Z_n$. The latter map   induces an isomorphism between the geometric points of $\Sym^n \Msp^{st}_d\setminus \Delta$ and of $Z^o_n$. By Zariski's main theorem, $Z^o_n\cong \Sym^n \Msp^{st}_d\setminus \Delta$. As the restriction of $\oplus$ to $(\Msp^{st}_d)^n\setminus \tilde{\Delta}$ is a left principal $S_n$-bundle over $\Sym^n\Msp^{st}_d\setminus \Delta\cong Z^o_n$, we can trivialize it \'{e}tale locally as $U\times S_n$ with $U\to Z^o_n$ being the \'{e}tale cover $(\Msp^{st}_d)^n\setminus \tilde{\Delta} \longrightarrow Z^o_n$, showing that the fiber of $L$ is just $\LL^{-n\dim\Msp^{st}_d/2}\otimes H^0(S_n,\QQ)$. The natural $S_n$-action on $\ICS_{\Msp^{ss}_d}(\QQ)^{\otimes n}$ is induced by the left multiplication with $S_n$ on the second factor of $U\times S_n$, while the right multiplication on $S_n$ and on $U$ corresponds to the Galois action of this \'{e}tale cover giving rise to a nontrivial monodromy of $L$. The $S_n$-bimodule $H^0(S_n,\QQ)$ decomposes as $\oplus_{\lambda \dashv n} W_\lambda\otimes W_\lambda$ with the left  and  the right factor corresponding to the left and the right $S_n$-action respectively. Moreover, by our convention, $\LL^{-n\dim \Msp^{st}_d/2}$ carries the $\dim \Msp^{st}_d$-th power of the sign representation. Thus, $L=\oplus_{\lambda \dashv n} W_{\lambda^\ast} \otimes \underline{W}_\lambda=\oplus_{\lambda \dashv n} W_{\lambda} \otimes \underline{W}_{\lambda^\ast}$ completing the proof.      
\end{proof}
\begin{remark} \rm
 The occurrence of conjugate partitions looks rather unnatural but is related to the fact that the naive permutation action of $S_n$ on left D-modules needs to be twisted by the sign representation depending on the dimension. See \cite{Schuermann}, Remark 1.6(i) for more details.
\end{remark}

We can also replace $\Msp^{ss}_\mu$ with $\NN^{Q_0}\times \Spec\CC$ considered as a zero-dimensional monoid in the category of complex varieties with quasiprojective connected components. All of our constructions  go through, and it is not difficult to see that 
\[ \underline{\Ka}_0(\MHM(\NN^{Q_0}\times \Spec\CC))[\LL^{-1/2}, (\LL^r-1)^{-1}: r\ge 1] =\]
\[ \Ka_0(\MHM(\CC))[\LL^{-1/2}, (\LL^r-1)^{-1}: r\ge 1][[ t_i:i\in Q_0]]\]
is the ring of power series in $|Q_0|$ variables. Since $\dim:\Msp^{ss}_\mu \longrightarrow \NN^{Q_0}\times \Spec\CC$ is a homomorphism of monoids with $\oplus$ and $+$ being finite, $\dim_\ast$ and  $\dim_!$ define triangulated tensor functors $D^b(\MHM(\Msp^{ss}_\mu)) \longrightarrow D^b(\MHM(\NN^{Q_0}\times \Spec\CC))$ commuting with Schur functors of the same type. In particular, we get  $\lambda$-ring homomorphisms $\dim_\ast$ and $\dim_!$ from
\[ \underline{\Ka}_0(\MHM(\Msp^{ss}_\mu))[\LL^{-1/2}, (\LL^r-1)^{-1}: r\ge 1] \]
to
\[  \Ka_0(\MHM(\CC))[\LL^{-1/2}, (\LL^r-1)^{-1}: r\ge 1][[t_i:i\in Q_0]] \]
commuting with the Schur operators, and similarly for perverse sheaves.

\section{DT invariants and intersection complexes}

\subsection{Donaldson--Thomas invariants}

We will now introduce a generalization of Donaldson--Thomas invariants using the notation of the previous sections. Let us fix a slope $\mu\in (-\infty,+\infty]$ and consider the morphism $p:\Mst^{ss}_\mu \longrightarrow \Msp^{ss}_\mu$. Our first object is\footnote{Note that $p_!$ is the derived direct image with compact support, while $p_\ast$ is the usual derived direct image.} $p_! \ICS_{\Mst^{ss}_\mu}(\QQ)$ in $\underline{\Ka}_0(\MHM(\Msp^{ss}_\mu))[\LL^{-1/2}, (\LL^r-1)^{-1}: r\ge 1]$. To define it properly, we should develop a theory of mixed Hodge modules on Artin stacks along with a four functor formalism. However, in our situation of smooth quotient stacks we will use a more direct approach avoiding complicated machinery. First of all, $\Mst^{ss}_d$ is smooth,  motivating $\ICS_{\Mst^{ss}_d}(\QQ)=\LL^{-\dim \Mst^{ss}_d/2}\QQ=\LL^{(d,d)/2}\QQ$. Recall that $q:R^{ss}_d \longrightarrow \Mst^{ss}_d$ is a $G_d$-principal bundle for every dimension vector $d$. By means of the projection formula we 
would expect a formula like 
\[ H_c^\ast(G_d,\QQ) \,\ICS_{\Mst^{ss}_d}(\QQ) = q_! q^\ast \ICS_{\Mst^{ss}_d}(\QQ) = \LL^{\dim G_d/2} q_! \ICS_{R^{ss}_d}(\QQ)= \LL^{(d,d)/2}q_! \QQ \]
in $\underline{\Ka}_0(\MHM(\Mst^{ss}_\mu))[\LL^{-1/2}, (\LL^r-1)^{-1}: r\ge 1]$. Hence, we will define $p_! \ICS_{\Mst^{ss}_d}(\QQ)$ as the product in $\underline{\Ka}_0(\MHM(\Msp^{ss}_\mu))[\LL^{-1/2}, (\LL^r-1)^{-1}: r\ge 1]$ of $\LL^{(d,d)/2}p_! q_! \QQ$ with the inverse of the class $\prod_{i\in Q_0}\LL^{d_i \choose 2}\prod_{r=1}^{d_i}(\LL^r-1)\in \ZZ[\LL]\subset\Ka_0(\MHM(\CC))$ of $H_c^\ast(G_d,\QQ)$. ``Summing'' over $d\in \Lambda_\mu$ gives $p_! \ICS_{\Mst^{ss}_\mu}(\QQ)$. The following lemma is a standard fact in the theory of (filtered) $\lambda$-rings.
\begin{lemma}
 There is an element $\DTS_\mu\in \underline{\Ka}_0(\MHM(\Msp^{ss}_\mu))[\LL^{-1/2}, (\LL^r-1)^{-1}: r\ge 1]$ with $\DTS_\mu|_{\Msp_0^{ss}}=0$ such that
 \[ p_! \ICS_{\Mst^{ss}_\mu}(\QQ) = \Sym \Bigl( \frac{1}{\LL^{1/2}-\LL^{-1/2}} \DTS_\mu \Bigr).\]
\end{lemma}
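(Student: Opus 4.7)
The plan is to invoke a standard formal inversion lemma for complete filtered pre-$\lambda$-rings: if $R$ is such a ring with augmentation ideal $F^1$ satisfying $\Sym^n(F^1) \subset F^n$, then the plethystic exponential
\[ \Sym\colon F^1 \longrightarrow \unit + F^1, \quad B \longmapsto \sum_{n \ge 0} \Sym^n(B) \]
is a bijection. In our setting, $R = \underline{\Ka}_0(\MHM(\Msp^{ss}_\mu))[\LL^{-1/2},(\LL^r-1)^{-1}:r\ge 1]$ carries exactly the filtration $F^\bullet$ introduced before Proposition \ref{lambda-ring}, and both required properties were recorded there. The inverse of $\Sym$ is built recursively: writing $A = \unit + \sum_{k \ge 1} A_k$ and $B = \sum_{k \ge 1} B_k$ with $A_k,B_k$ supported on dimension vectors of total size $k$, the splitting formula \eqref{eqn1} shows that $\Sym(B) = A$ reduced modulo $F^{k+1}$ takes the form $B_k = A_k - P_k(B_1,\ldots,B_{k-1})$ for a universal polynomial $P_k$, which determines $B$ uniquely; completeness of the filtration guarantees convergence.

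It remains to check that $A := p_!\ICS_{\Mst^{ss}_\mu}(\QQ)$ lies in $\unit + F^1$. At $d=0$ the stack $\Mst^{ss}_0$ is the quotient of a point by the trivial group, so $p$ restricts to the identity on $\Msp^{ss}_0$, and the definition of $p_!\ICS_{\Mst^{ss}_d}(\QQ)$ as $\LL^{(d,d)/2} p_! q_! \QQ / [H^\ast_c(G_d,\QQ)]$ specializes in degree zero to the unit $\unit=\ICS_{\Msp^{ss}_0}(\QQ)$. All remaining components have $|d|\ge 1$ and so lie in $F^1$ by definition of the filtration. Hence $\Sym^{-1}$ may be applied, producing a unique $B \in F^1$ with $\Sym(B) = A$.

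Now set $\DTS_\mu := (\LL^{1/2} - \LL^{-1/2})\,B$. Since $\LL^{1/2}-\LL^{-1/2} = \LL^{-1/2}(\LL-1)$ is a unit in the coefficient ring of $R$, multiplication by it preserves $F^1$, so $\DTS_\mu \in F^1$, whence $\DTS_\mu|_{\Msp^{ss}_0} = 0$. By construction,
\[ p_! \ICS_{\Mst^{ss}_\mu}(\QQ) \;=\; \Sym(B) \;=\; \Sym\Bigl(\tfrac{1}{\LL^{1/2}-\LL^{-1/2}}\DTS_\mu\Bigr), \]
as required. Uniqueness of $\DTS_\mu$ is clear from the bijectivity of $\Sym$ and of multiplication by the unit $\LL^{1/2}-\LL^{-1/2}$.

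The argument is purely formal and presents no real obstacle; the only point requiring care is the verification that the recursion defining $\Sym^{-1}$ makes sense in the completed Grothendieck ring, which is precisely what the $\lambda$-ring filtration $F^p F^q \subset F^{p+q}$, $S^\lambda(F^p) \subset F^{np}$ (for $\lambda \vdash n$) was designed to guarantee.
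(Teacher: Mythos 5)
Your proposal is correct and is exactly the argument the paper has in mind: the paper dispenses with the proof by calling the lemma ``a standard fact in the theory of (filtered) $\lambda$-rings,'' and what you have written out --- invertibility of the plethystic exponential $\Sym\colon F^1\to \unit+F^1$ via the degree-by-degree recursion guaranteed by $\Sym^n(F^1)\subset F^n$ and completeness, the check that $p_!\ICS_{\Mst^{ss}_\mu}(\QQ)$ has unit component over $\Msp^{ss}_0$, and the observation that $\LL^{1/2}-\LL^{-1/2}=\LL^{-1/2}(\LL-1)$ is a unit in the coefficient ring --- is precisely that standard fact, correctly instantiated.
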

\begin{definition}
 We call $\DTS\in \underline{\Ka}_0(\MHM(\Msp^{ss}))[\LL^{-1/2}, (\LL^r-1)^{-1}: r\ge 1]$ with $\DTS|_{\Msp^{ss}_\mu}=\DTS_\mu$ for all $\mu\in (-\infty,+\infty]$ the Donaldson--Thomas ``function'' and $\DT_d:=\dim_! \DTS_d = H^\ast_c(\Msp^{ss}_d,\DTS_d)\in \Ka_0(\MHM(\CC))[\LL^{-1/2}, (\LL^r-1)^{-1}: r\ge 1]$ the Donaldson--Thomas invariant of dimension vector $d$ with respect to the given stability condition $\zeta$.   
\end{definition}
As $\dim_!$ is a $\lambda$-ring homomorphism, our definition of Donaldson--Thomas invariants agrees with the usual one \cite{KS2}. 
Recall that our   stability condition $\zeta$ was called $\mu$-generic if $\langle d,e\rangle=0$ for all $d,e\in \Lambda_\mu$, and generic if that holds for all $\mu\in (-\infty,+\infty]$. The following result is Corollary \ref{alternative_form}. 
\begin{proposition}
For a $\mu$-generic stability condition and a framing vector $f\in \NN^{Q_0}$ such that $2|f_i$ for all $i\in Q_0$, we obtain the following formula with $\Lambda'_\mu:=\Lambda_\mu\setminus \{0\}$
\begin{equation} \label{eqn4} \pi_\ast \ICS_{\Msp^{ss}_{f,\mu}}(\QQ)=\pi_! \ICS_{\Msp^{ss}_{f,\mu}}(\QQ) = \Sym \Bigl( \sum_{d\in \Lambda'_\mu} [\PP^{f\cdot d-1}]_{vir} \DTS_d \Bigr) \end{equation}
in $\underline{\Ka}_0(\MHM(\Msp^{ss}_\mu))[\LL^{-1/2}, (\LL^r-1)^{-1}: r\ge 1]$, using the shorthand $[\PP^{f\cdot d-1}]_{vir}:= \frac{\LL^{f\cdot d/2} - \LL^{-f\cdot d/2}}{\LL^{1/2}-\LL^{-1/2}}$. Here $\pi:\Msp^{ss}_{f,\mu} \longrightarrow\Msp^{ss}_\mu$ is the morphism forgetting the framing. 
\end{proposition}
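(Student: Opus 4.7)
The plan is to recast the asserted identity as a multiplicative equation in the completed $\lambda$-ring $\underline{\Ka}_{0}(\MHM(\Msp^{ss}_{\mu}))[\LL^{-1/2},(\LL^{r}-1)^{-1}:r\ge 1]$ and then to produce it by stratifying the universal space of framings. Write $A:=\pi_{\ast}\ICS_{\Msp^{ss}_{f,\mu}}(\QQ)$, $B:=p_{!}\ICS_{\Mst^{ss}_{\mu}}(\QQ)$, and $Y:=\DTS_{\mu}/(\LL^{1/2}-\LL^{-1/2})$, so that $B=\Sym(Y)$ by the defining property of $\DTS_{\mu}$. Let $\sigma_{\pm f}$ denote the operator on $\Lambda_{\mu}$-graded elements that multiplies the $d$-component by $\LL^{\pm f\cdot d/2}$. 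The hypothesis $2\mid f_{i}$ makes $f\cdot d$ even, so $\LL^{\pm f\cdot d/2}$ is an integer power of $\LL$, a positive line element of the $\lambda$-ring; consequently $\sigma_{\pm f}$ is a ring endomorphism commuting with all Schur operations. Direct expansion of $[\PP^{f\cdot d-1}]_{vir}$ yields
\[
\sum_{d\in\Lambda'_{\mu}}[\PP^{f\cdot d-1}]_{vir}\DTS_{d}\;=\;\sigma_{f}Y-\sigma_{-f}Y,
\]
and the standard plethystic identities $\Sym(C+D)=\Sym(C)\Sym(D)$, $\Sym(-D)=\Sym(D)^{-1}$ reduce the asserted formula to the multiplicative identity
\[
A\cdot\sigma_{-f}B\;=\;\sigma_{f}B.
\]
The equality $\pi_\ast=\pi_!$ is automatic since $\pi$ is projective by Theorem~\ref{virtsmall}, so only the $\pi_!$-form needs to be established.

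I would prove this multiplicative identity geometrically through a stratification of the total framing space. Consider the $\AA^{f\cdot d}$-bundle $[R_{d}^{ss}\times\AA^{f\cdot d}/G_{d}]\to \Mst^{ss}_{d}$ carrying the tautological framing $\phi\colon\KK^{f}\to V$, and stratify by the subrepresentation $V'=\langle\phi\rangle\subset V$ generated by $\phi$. The stratum indexed by $d'\le d$ records a generating (hence $\zeta'$-stable) framing of $V'$ of dimension $d'$ together with an extension $0\to V'\to V\to V''\to 0$ with $V''$ of dimension $d''=d-d'$, i.e.\ a point of a fibre product built from $\Msp^{ss}_{f,d'}$ and the universal extension stack $\mathfrak{E}xact_{d',d''}$ of Section~2. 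The decisive simplification is $\mu$-genericity, which forces $\langle d',d''\rangle=0$ on $\Lambda_{\mu}$: this makes the Hall-algebra twist $\LL^{-\langle d',d''\rangle/2}$ trivialize so that each stratum contribution factorizes cleanly. Tracking the classes of the principal bundles $p$ and $p_{f,d'}$ (which are exactly those used to define $p_{!}\ICS_{\Mst^{ss}_{\mu}}$) and summing over $d'+d''=d$ yields the desired identity $A\cdot\sigma_{-f}B=\sigma_{f}B$ componentwise.

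The main obstacle is that the paper does not develop a six-functor formalism for mixed Hodge modules on the Artin stacks $\Mst^{ss}_{\mu}$ or on the space of framings, so the stratification above cannot be executed literally on stacks. Instead one descends to coarse moduli, using the finite commutative-monoid structure $\oplus\colon \Msp^{ss}_{\mu}\times\Msp^{ss}_{\mu}\to \Msp^{ss}_{\mu}$ of Lemma~2.2 together with the proper morphisms out of $\mathfrak{E}xact_{d',d''}$ established in the proof of that lemma, and then lifts back through the principal bundles $p$ and $p_{f,d'}$. In effect, the multiplicative identity $A\cdot\sigma_{-f}B=\sigma_{f}B$ is the DT/PT correspondence advertised for Section~6; once that correspondence is in hand, the proposition follows at once by the plethystic manipulation of the first paragraph.
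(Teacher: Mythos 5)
Your proposal is correct and follows essentially the same route as the paper: your multiplicative identity $A\cdot\sigma_{-f}B=\sigma_f B$ is exactly the paper's DT/PT correspondence (Proposition \ref{PT-DT_stack}) after the rescaling by $\LL^{-f\cdot d/2}$ made possible by $2\mid f_i$, and your geometric input --- stratifying the framing space by the subrepresentation generated by the framing, with $\mu$-genericity killing the Hall twist $\LL^{\langle d',d''\rangle/2}$ --- is precisely the content of identity (\ref{Hilbert_scheme_identity}) and Lemma \ref{integration_map}. The only cosmetic difference is that the paper sidesteps the missing six-functor formalism on stacks by running the whole argument in the ring of motivic functions and then applying the realization $[X\xrightarrow{q}\Msp]\mapsto q_!\QQ$ to $\Ka_0(\MHM)$ at the very end, rather than descending to coarse moduli as you suggest.
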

The parity assumption on the framing vector is made to avoid typical ``sign problems''. 

\subsection{The main result}

We also need  the following result proven in section 6.
\begin{theorem} \label{intconj}
If $\zeta$ is $\mu$-generic and $i_x:\Spec\CC\hookrightarrow \Msp_\mu$ the embedding corresponding to an arbitrary closed point  $x\in \Msp$, then the ``value'' $\DTS(x):=i_x^\ast\DTS$ of the Donaldson--Thomas function $\DTS$ is in the image of the natural map \[\Ka_0(\MHM(\CC))[\LL^{-1/2}] \longrightarrow \Ka_0(\MHM(\CC))[\LL^{-1/2}, (\LL^r-1)^{-1}: r\ge 1].\]  
\end{theorem}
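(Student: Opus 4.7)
The plan is to reduce the assertion to Efimov's integrality theorem (\cite{Efimov}, Theorem 1.1) for symmetric quivers with trivial stability, by exhibiting $\DTS(x)$ as the local contribution at the origin of the DT function of the Ext-quiver attached to the polystable representation encoded by $x$. The statement is the Hodge-theoretic, closed-point specialization of the relative motivic integrality statement that is the main business of Section~6.

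For the local model, write $x$ as the polystable $V=\bigoplus_{k\in K}E_k^{m_k}$ with pairwise non-isomorphic stable summands $E_k$ of dimension vectors $d_k\in\Lambda_\mu$, and set $m=(m_k)_{k\in K}$. Let $Q_E$ be the Ext-quiver on vertex set $K$ with $\dim\Ext^1_{\KK Q}(E_k,E_l)$ arrows from $k$ to $l$. Luna's \'etale slice theorem, applied to the $G_d$-action at the closed orbit of $V\in R^{ss}_d$, yields an \'etale neighborhood of $x$ in $\Msp^{ss}_d$ isomorphic to an \'etale neighborhood of the origin in $\Msp^{ssimp}_{Q_E,m}$, the moduli space of $m$-dimensional semisimple $Q_E$-representations for the trivial stability. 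Because $\zeta$ is $\mu$-generic, $\langle d_k,d_l\rangle=0$ and hence $(d_k,d_l)=(d_l,d_k)$; combined with the vanishing of $\Hom_{\KK Q}(E_k,E_l)$ for distinct stables, this gives $\dim\Ext^1_{\KK Q}(E_k,E_l)=\dim\Ext^1_{\KK Q}(E_l,E_k)$ for $k\ne l$, so that $Q_E$ is symmetric.

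Next I would show that the formation of $\DTS$ is local in the \'etale topology. The defining identity $p_!\ICS_{\Mst^{ss}_\mu}(\QQ)=\Sym\bigl(\tfrac{1}{\LL^{1/2}-\LL^{-1/2}}\DTS_\mu\bigr)$ characterizes $\DTS_\mu$ uniquely, and each of its ingredients --- namely $p_!$, the convolution $\oplus_\ast(-\boxtimes -)$ inducing the Schur operators, the normalization $\LL^{(d,d)/2}$, and the inverse $[H^\ast_c(G_d,\QQ)]$-factor --- commutes with \'etale base change. Consequently $\DTS(x)$ agrees with the value at the origin of the DT function $\DTS^{Q_E}_m$ of $Q_E$ with trivial stability. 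Efimov's theorem gives $\DT^{mot,Q_E}_{m'}\in\Ka_0(\Var/\CC)[\LL^{-1/2}]$ for all $0\ne m'\le m$; a plethystic induction on $|m|$, of the same shape as the wall-crossing/factorization arguments of Section~4, extracts the fibre of $\DTS^{Q_E}$ at the origin from these absolute invariants, yielding the required integrality after applying the Hodge realization.

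The principal obstacle is the second step: establishing the Luna-slice compatibility in the mixed Hodge module setting, i.e.\ tracking the normalization factors $\LL^{(d,d)/2}$, the inverse classes $[H^\ast_c(G_d,\QQ)]^{-1}$, and the Schur operators through the slice in such a way that the pull-back of $\DTS$ on $\Msp^{ss}_d$ is literally identified with $\DTS^{Q_E}$ near the origin of $\Msp^{ssimp}_{Q_E,m}$. Once this compatibility is in place, the symmetry of $Q_E$, Efimov's theorem, and the transfer from motives to mixed Hodge modules via the Hodge realization functor are essentially formal.
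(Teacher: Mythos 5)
Your overall strategy --- reduce to Efimov's theorem for the symmetric Ext-quiver $Q_\xi$ of the polystable representation underlying $x$, with symmetry coming from $\mu$-genericity --- is exactly the strategy of the paper (Section~6, Theorem \ref{intconjsv} and Proposition \ref{localDT}). But the step you yourself flag as ``the principal obstacle'' is precisely the mathematical content of the proof, and the route you propose for it (Luna slices plus \'etale locality of the formation of $\DTS$) does not work as stated and is not what the paper does. The defining identity $p_!\ICS_{\Mst^{ss}_\mu}=\Sym\bigl(\tfrac{1}{\LL^{1/2}-\LL^{-1/2}}\DTS_\mu\bigr)$ is \emph{not} local in the \'etale topology on $\Msp^{ss}_d$: the convolution $\oplus_\ast(-\boxtimes-)$ and the Schur operators entering $\Sym$ mix all the components $\Msp^{ss}_{d'}$ with $d'\le d$, so they do not restrict to an \'etale neighbourhood of a single point $x\in\Msp^{ss}_d$. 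The paper circumvents this by never taking a slice: it pulls back along the \emph{monoid homomorphism} $\iota_E:\NN^s\times\Spec\KK\to\Msp^{ss}_\mu$ classifying the whole family $\bigoplus_k E_k^{n_k}$, $n\in\NN^s$, and Lemma \ref{lambda_pull_back} shows that $\iota_E^\ast$ is a $\lambda$-ring homomorphism (this requires checking that $(\NN^s)^{\times n}\to \NN^s\times_{\Msp^{ss}_\mu}(\Msp^{ss}_\mu)^{\times n}$ is a closed embedding bijective on geometric points --- a Krull--Schmidt statement, not a slice statement). Only then can the defining identity be pulled back to give $\tilde p_!\bigl(\tilde\iota_E^\ast\ICS^{mot}_{\Mst^{ss}_\mu}\bigr)=\Sym\bigl(\tfrac{1}{\LL^{1/2}-\LL^{-1/2}}\iota_E^\ast\DTS^{mot}_\mu\bigr)$.

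The second missing piece is the identification of $\iota_E^\ast\DTS^{mot}_\mu$ with $\DT^{mot}(Q_\xi)|_{\LL^{1/2}\mapsto\LL^{-1/2}}$, i.e.\ the statement that the value of the DT function at the polystable point equals the \emph{full} DT series of the Ext-quiver (after the duality substitution $\LL^{1/2}\mapsto\LL^{-1/2}$, which your sketch omits), not merely something assembled from the absolute invariants $\DT^{mot}(Q_\xi)_{m'}$ by a ``plethystic induction''. The paper proves this by an explicit Ringel--Hall computation: it shows $f\ast g=1$ for $f=\sum_n(-1)^{|n|}\LL^{\sum_k\binom{n_k}{2}}[\Spec\KK/G_n\to\Mst^{ss}_{\mu,\KK}]$ and $g=[\Mst_E\to\Mst^{ss}_{\mu,\KK}]$, using the socle filtration, Grassmannian fibrations, and the vanishing of $\sum_{n}(-1)^{n}\LL^{\binom{n}{2}}\bigl[{N\atop n}\bigr]$ for $N>0$, and then applies the integration map $I^{ss}_\mu$ (whose multiplicativity uses $\mu$-genericity again). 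Without an argument of this kind your reduction to Efimov is only a plausible reduction, not a proof; and since Theorem \ref{intconj} is exactly the Hodge realization of this statement at closed points, the gap sits at the heart of what was to be shown.
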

\begin{remark} \rm Note that $\Ka_0(\MHM(\Msp_d^{ss}))$ is free over $\ZZ[\LL^{\pm 1}]$. Indeed, the set of all intersection complexes $\ICS_Z(L)$, with $Z$ running through all irreducible closed subvarieties of $\Msp_d^{ss}$ and $L$ running through equivalence classes of all irreducible,  polarizable, admissible variations of pure Hodge structures $L$ supported on $Z^o\subset Z_{reg}$ with quasi-unipotent monodromy at ``infinity'' and weight zero or one, provides a basis of the $\ZZ[\LL^{\pm 1}]$-module $\Ka_0(\MHM(\Msp_d^{ss}))$. As  $\ZZ[\LL^{\pm 1/2}] \hookrightarrow \ZZ[\LL^{-1/2}, (\LL^r-1)^{-1}: r\ge 1]$ is injective, we can, therefore, identify $\underline{\Ka}_0(\MHM(\Msp^{ss}_\mu))[\LL^{-1/2}] $ with a $\lambda$-subring of $\underline{\Ka}_0(\MHM(\Msp^{ss}_\mu))[\LL^{-1/2}, (\LL^r-1)^{-1}: r\ge 1]$ and similarly for $\Ka_0(\MHM(\CC))[\LL^{-1/2}]$.
\end{remark}
\begin{theorem} \label{DT=IC}
 Assume that $\zeta$ is $\mu$-generic. Then \[ \DTS_\mu=\ICS_{\overline{\Msp_\mu^{st}}}(\QQ) \] holds in $\underline{\Ka}_0(\MHM(\Msp^{ss}_\mu))[\LL^{-1/2}]$. In particular, for generic $\zeta$ 
 \[ \DT_d=\begin{cases} \IC_c(\Msp^{ss}_d,\QQ)=\IC(\Msp^{ss}_d,\QQ)^\vee &\mbox{ if } \Msp^{st}_d\neq \emptyset, \\
          0 & \mbox{ otherwise}
         \end{cases} \]
holds in $\Ka_0(\MHM(\CC))[\LL^{-1/2}]$ for every dimension vectors $d\in \Lambda_\mu$.
\end{theorem}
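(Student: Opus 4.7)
The plan is to prove $\DTS_d=\ICS_{\overline{\Msp^{st}_d}}(\QQ)$ by strong induction on $|d|=\sum_{i\in Q_0} d_i$ for every $d\in \Lambda_\mu$, by matching the two sides of (\ref{eqn4}). Expanding $\Sym$ on the right-hand side via Proposition \ref{lambda-ring} and extracting the component supported over $\Msp^{ss}_d$ yields
\[ \pi_\ast \ICS_{\Msp^{ss}_{f,d}}(\QQ) \;=\; [\PP^{f\cdot d-1}]_{vir}\, \DTS_d \;+\; \Phi_d\bigl(\{\DTS_{d'}:|d'|<|d|\}\bigr), \]
where $\Phi_d$ is an explicit polynomial expression (via $\oplus_\ast$) in the $\DTS_{d'}$ with $|d'|<|d|$. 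Under the inductive hypothesis, $\Phi_d$ becomes a known combination of intersection complexes supported on the proper closed subvariety
\[ \bigcup_{d=d_1+\cdots+d_k,\,k\geq 2}\oplus\!\left(\overline{\Msp^{st}_{d_1}}\times\cdots\times \overline{\Msp^{st}_{d_k}}\right)\;\subset\; \Msp^{ss}_d, \]
which contains no summand whose support is the full closure $\overline{\Msp^{st}_d}$.

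\textbf{Left-hand side via decomposition and virtual smallness.} Since $\Msp^{ss}_{f,d}$ is smooth and $\pi$ is projective and virtually small (Theorem \ref{virtsmall}), the decomposition theorem combined with relative hard Lefschetz gives
\[ \pi_\ast \ICS_{\Msp^{ss}_{f,d}}(\QQ) \;=\; \sum_\xi P_\xi(\LL^{1/2})\, \ICS_{\overline{S_\xi}}(L_\xi) \]
in $\underline{\Ka}_0(\MHM(\Msp^{ss}_d))[\LL^{-1/2}]$, with each $P_\xi\in\ZZ_{\geq 0}[\LL^{\pm 1/2}]$ palindromic under $\LL^{1/2}\mapsto\LL^{-1/2}$ and $L_\xi$ a semisimple variation of Hodge structure on a dense open subset of $S_\xi$. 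The virtual smallness estimate translates into an upper bound for the top $\LL^{1/2}$-degree of each $P_\xi$, strict for $\xi\neq 0$. When $\Msp^{st}_d\neq\emptyset$, the $S_0$-contribution is computed directly from the projective bundle structure of $\pi$ over the stable locus and equals $[\PP^{f\cdot d-1}]_{vir}\cdot\ICS_{\overline{\Msp^{st}_d}}(\QQ)$ (with $L_0=\QQ$).

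\textbf{Conclusion via coefficient comparison.} By Theorem \ref{intconj} together with the Remark preceding Theorem \ref{DT=IC}, $\DTS_d$ lies in $\underline{\Ka}_0(\MHM(\Msp^{ss}_d))[\LL^{-1/2}]$ and admits an expansion $\DTS_d=\sum_{(Z,L)} Q_{Z,L}(\LL^{\pm 1/2})\,\ICS_Z(L)$ with $Q_{Z,L}\in\ZZ[\LL^{\pm 1/2}]$ in the free $\ZZ[\LL^{\pm 1/2}]$-basis of irreducible IC sheaves. Extract the coefficient of $\ICS_{\overline{\Msp^{st}_d}}(\QQ)$ on both sides of the reduced identity: on the left it equals $[\PP^{f\cdot d-1}]_{vir}$ (exclusively from $S_0$, as no other stratum and no term of $\Phi_d$ has this support), while on the right it equals $[\PP^{f\cdot d-1}]_{vir}\,Q_{\overline{\Msp^{st}_d},\QQ}$, forcing $Q_{\overline{\Msp^{st}_d},\QQ}=1$. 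For every other basis element $(Z,L)$ the top $\LL^{1/2}$-degree on the left is strictly bounded by virtual smallness (and is inductively known on the $\Phi_d$ side); on the right, $\deg([\PP^{f\cdot d-1}]_{vir}\,Q_{Z,L})=(f\cdot d-1)+\deg Q_{Z,L}$, so choosing $f$ with $2\mid f_i$ and $f\cdot d$ sufficiently large, a nonzero $Q_{Z,L}$ would violate the strict inequality. Hence $Q_{Z,L}=0$ and $\DTS_d=\ICS_{\overline{\Msp^{st}_d}}(\QQ)$. When $\Msp^{st}_d=\emptyset$ the $S_0$-stratum is absent, every $P_\xi$ obeys the strict bound, and the same degree argument forces $\DTS_d=0$.

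\textbf{Main obstacle.} The principal difficulty is the rigorous degree bookkeeping: one must align the strict upper bounds on top $\LL^{1/2}$-degrees from virtual smallness with the integrality from Theorem \ref{intconj} (which ensures the $Q_{Z,L}$ are Laurent polynomials rather than rational functions), while tracking the inductive contributions $\Phi_d$, the non-constant local systems $L_\xi$, and the palindromic symmetry from hard Lefschetz. The freedom to enlarge $f$ subject to the parity condition is what makes $f\cdot d-1$ large enough to rule out every non-trivial $Q_{Z,L}$ and isolate the leading stable contribution.
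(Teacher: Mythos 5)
Your overall architecture — induction on $|d|$, expansion of the $\Sym$ in (\ref{eqn4}) to isolate $[\PP^{f\cdot d-1}]_{vir}\DTS_d$ plus lower-order terms, the decomposition theorem with relative hard Lefschetz and virtual smallness on the left-hand side, and a degree comparison in the basis of intersection complexes — is exactly the paper's. However, your concluding step contains a genuine gap: you assert that ``by Theorem \ref{intconj} together with the Remark,'' $\DTS_d$ lies in $\underline{\Ka}_0(\MHM(\Msp^{ss}_d))[\LL^{-1/2}]$ with coefficients $Q_{Z,L}\in\ZZ[\LL^{\pm 1/2}]$. Neither source gives this. The Remark only says that $\underline{\Ka}_0(\MHM(\Msp^{ss}_d))[\LL^{-1/2}]$ \emph{embeds} into the localized ring; it does not place $\DTS_d$ inside it. Theorem \ref{intconj} is a strictly \emph{pointwise} statement: it asserts integrality of $i_x^\ast\DTS$ for closed points $x$, not of $\DTS_d$ as a global object. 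A priori the coefficients $Q_{Z,L}$ are elements of $\ZZ[\LL^{-1/2},(\LL^r-1)^{-1}:r\ge 1]$, where $[\PP^{f\cdot d-1}]_{vir}$ is a unit, so your coefficient extraction and degree comparison cannot be run in that ring — and the claim that $\DTS_d$ has Laurent-polynomial coefficients is itself (a weaker form of) the conclusion you are trying to prove, so assuming it is circular.

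The missing idea is the device the paper uses to convert pointwise integrality into the global statement. After combining the two expressions for $\pi_\ast\ICS_{\Msp^{ss}_{f,d}}$ one gets $[\PP^{f\cdot d-1}]_{vir}\bigl(\DTS_d-\ICS_{\overline{\Msp^{st}_d}}(\QQ)\bigr)=\sum_{(Z,L)}f_{Z,L}(\LL^{1/2})\ICS_Z(L)$ with $f_{Z,L}$ palindromic of degree $<f\cdot d-1$ (these degree bounds hold for your fixed $f$; no enlargement of $f$ is needed). One then picks, among the pairs with $f_{Z,L}\neq 0$, one with $Z$ of maximal dimension, and a closed point $x\in Z^o$ avoiding all other relevant supports. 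Pulling back along $i_x$ lands in $\Ka_0(\MHM(\CC))[\LL^{-1/2}]$, which \emph{is} free over $\ZZ[\LL^{\pm 1/2}]$ precisely because Theorem \ref{intconj} applies at the point $x$; there the coefficient of $\ICS_x(L_x)$ must be divisible by $[\PP^{f\cdot d-1}]_{vir}$, of degree $f\cdot d-1$, which contradicts $\deg f_{Z,L}<f\cdot d-1$. You should replace your global coefficient extraction by this restriction-to-a-point argument; the rest of your proposal then goes through.
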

\begin{proof} As we have already mentioned,  $\Ka_0(\MHM(\Msp^{ss}_d))[\LL^{-1/2}]$ is a free $\ZZ[\LL^{\pm 1/2}]$ module with a basis given by the classes of $\ICS_Z(L)$. Here, $Z$ is an irreducible closed subvariety of $\Msp^{ss}_d$ and $L$ a pure irreducible, polarizable, admissible
variation of pure Hodge structures on $Z^o\subset Z_{reg}$ of weight zero with quasi-unipotent monodromy at ``infinity''. Two pairs $(Z,L)$ and $(Z',L')$ define the same intersection complex if $Z=Z'$ and $L|_{Z^o\cap Z'^o}=L'|_{Z^o\cap Z'^o}$. We get $i_x^\ast \ICS_Z(L)=\LL^{-\dim Z/2} \ICS_x(L_x)$ in $\Ka_0(\MHM(\CC)[\LL^{- 1/2}]$ with $L_x:=i^\ast_xL$ for a generic complex point $x\in Z$.\\ 
We prove the theorem by induction over $|d|$ starting with the trivial case $d=0$ for which the theorem is obviously true as $\Msp^{st}_0=\emptyset$. 
As before, $\Part$ denotes the set of all partitions of arbitrary size and $\Lambda'_\mu=\Lambda_\mu\setminus \{0\}$. We fix a framing vector $f\in \NN^{Q_0}$ such that $2|f_i$ for all $i\in Q_0$ and rewrite equation (\ref{eqn4}) using equations (\ref{eqn1}) and (\ref{eqn2}):
\[ \pi_\ast \ICS_{\Msp^{ss}_{f,d}}=\sum_{\begin{array}{c} \scriptstyle\lambda: \Lambda'_\mu \rightarrow \Part \\ \scriptstyle \sum |\lambda_e|e=d\end{array}} \prod_{e\in \Lambda'_\mu} S^{\lambda_e}[\PP^{f\cdot e -1}]_{vir} \cdot S^{\lambda_e}\DTS_e.\]
By induction over $|d|=\sum_{i\in Q_0}d_i$, we conclude using equation (\ref{eqn3}) that
\begin{eqnarray}
 \pi_\ast \ICS_{\Msp^{ss}_{f,d}}& =& \underbrace{[\PP^{f\cdot d-1}]_{vir}\DTS_d}_{\mbox{\scriptsize for }\lambda=\delta_d} + \sum_{\begin{array}{c} \scriptstyle \lambda: \Lambda'_\mu \rightarrow \Part \\ \scriptstyle \sum |\lambda_e|e=d \\ \scriptstyle \lambda\neq\delta_d \end{array}} \Big( \prod_{e\in \Lambda'_\mu} S^{\lambda_e}[\PP^{f\cdot e -1}]_{vir} \Big) \ICS_{Z_\lambda}(L_\lambda) \nonumber \\
 \label{eqn5} & = & \frac{\LL^{fd/2}-\LL^{-fd/2}}{\LL^{1/2}-\LL^{-1/2}}\DTS_d + \sum_{\begin{array}{c} \scriptstyle \lambda: \Lambda'_\mu \rightarrow \Part \\ \scriptstyle \sum |\lambda_e|e=d \\ \scriptstyle \lambda\neq\delta_d \end{array}} h_\lambda(\LL^{1/2})\cdot \ICS_{Z_\lambda}(L_\lambda),
\end{eqnarray}
for some palindromic Laurent polynomials $h_\lambda(\LL^{1/2})=h_\lambda(\LL^{-1/2})$ of degree at most $f\cdot d-\sum_e |\lambda_e|<f\cdot d -1$, some irreducible closed subvarieties $Z_\lambda$ and some variations $L_\lambda$ of Hodge structures of weight zero.\\
On the other hand, we can use the 
fact that $\pi$ is virtually small (see Theorem \ref{virtsmall}) and the relative hard Lefschetz theorem applied to the projective morphism $\pi$ to conclude
\begin{equation} \label{eqn6}
\pi_\ast \ICS_{\Msp^{ss}_{f,d}}=[\PP^{f\cdot d-1}]_{vir}\ICS_{\overline{\Msp^{st}_d}}(\QQ) + \sum_{(Z,L),\,Z\not=\overline{\Msp^{st}}} g_{Z,L}(\LL^{1/2}) \:\ICS_Z(L) 
\end{equation}
for certain palindromic Laurent polynomials $g_{Z,L}(\LL^{1/2})=g_{Z,L}(\LL^{-1/2})$ of degree less than $f\cdot d-1$. Here, $\ICS_{\overline{\Msp^{st}_d}}(\QQ)$ is zero if $\Msp^{st}_d=\emptyset$. Combining both equations, we get
\[ \frac{\LL^{fd/2}-\LL^{-fd/2}}{\LL^{1/2}-\LL^{-1/2}}\Big(\DTS_d-\ICS_{\overline{\Msp^{st}_d}}(\QQ)\Big)= \sum_{(Z,L), \,Z\not=\overline{\Msp^{st}_d}} f_{Z,L}(\LL^{1/2})\ICS_Z(L) \]
for certain palindromic Laurent polynomials $f_{Z,L}(\LL^{1/2})=f_{Z,L}(\LL^{-1/2})$ of degree less than $f\cdot d-1$. The sum on the right hand side is taken over pairs $(Z,L)$ as above (up to equivalence). We claim that both sides of the equation are zero. If not, we pick among all pairs $(Z,L)$ with $f_{Z,L}\not=0$ one for which $Z$ is of maximal dimension. Hence, we can find a closed point $x\in Z^o$ not contained in any other $Z'$ with $f_{Z',L'}\not=0$. Using the notation $i_x:\Spec\CC\to \Msp^{ss}_d$, we get
\[ \frac{\LL^{fd/2}-\LL^{-fd/2}}{\LL^{1/2}-\LL^{-1/2}}\Big(\DTS(x)-i^\ast_x\ICS_{\overline{\Msp^{st}_d}}(\QQ)\Big)=\LL^{-\dim Z/2}f_{Z,L}(\LL^{1/2})\,\ICS_x(L_x) \]
which is now an equation in the free $\ZZ[\LL^{\pm 1/2}]$-module $\Ka_0(\MHM(\CC))[\LL^{-1/2}]$ due to Theorem \ref{intconj}. In particular, the coefficient in front of the basis vector $\ICS_x(\LL_x)$ on the right hand side of the equation must be divisible in $\ZZ[\LL^{\pm 1/2}]$ by the palindromic Laurent polynomial
\[ \frac{\LL^{fd/2}-\LL^{-fd/2}}{\LL^{1/2}-\LL^{-1/2}}=\LL^{\frac{fd-1}{2}}+ \ldots + \LL^{\frac{1-fd}{2}} \]
of degree $fd-1$ in $\LL^{1/2}$ which is impossible as the degree of $f_{Z,L}$ is strictly smaller. Thus, the claim is proven, and 
$\DTS_d=\ICS_{\overline{\Msp^{st}_d}}(\QQ)$ follows.
\end{proof}

\subsection{Application to matrix invariants}

Since the motivic DT invariants of $m$-loop quivers are computed explicitly in \cite{Reineke4}, our main result allows us to give an explicit formula for the Poincar\'e polynomial in (compactly supported) intersection cohomology of the corresponding moduli spaces, which are the classical spaces of matrix invariants.\\
So let $Q^{(m)}$ be the quiver with a single vertex and $m\geq 2$ loops (in the case of no loop, or of one loop, the non-empty moduli spaces reduce to affine spaces). We consider the trivial stability and a positive integer $d$, and fix an $d$-dimensional $\CC$-vector space $V$. Then the moduli space $\Msp_d^{ss}(Q^{(m)})$ equals the invariant theoretic quotient $\Msp_d^{(m)}:=\End_\CC(V)^m/\!\!/\Gl_\CC(V)$ of $m$-tuples of linear operators up to simultaneous conjugation. This is an irreducible normal affine variety of dimension $(m-1)d^2+1$, singular except in case $d=1$ or $m=d=2$.\\
To formulate the explicit formula for the compactly supported intersection Betti numbers of $\Msp_d^{(m)}$, we need some combinatorial notions from \cite{Reineke4}. Let $U_d$ be the set of sequences $(a_1,\ldots,a_d)$ of natural numbers summing up to $(m-1)d$, on which the cyclic group $C_d$ of order $d$ acts by cyclic permutation. We call a sequence $a_*$ primitive if it is different from all its cyclic permutation, and almost primitive if it is either primitive, or $m$ is even, $d\equiv 2\bmod 4$, and the sequence equals twice a primitive sequence of length $d/2$. We define the degree of the sequence as $\sum_{i=1}^d(d-i)a_i$ and  the degree of a cyclic class of sequences as the minimal degree of sequences in this class. Let $U_d^{ap}/C_d$ be the set of cyclic classes of almost primitive sequences. Combining our main result with the formula for DT invariants in \cite{Reineke4}, we arrive at:

\begin{theorem}\label{mi} For all $d\geq 1$ and $m\geq 2$, we have
$$\sum_p\dim\IC^p_c(\Msp_d^{(m)},\QQ)v^p=v^{(m-1)d^2+1}\frac{1-v^{-2}}{1-v^{-2d}}\sum_{C\in U_d^{ap}/C_d}v^{-2\deg C}.$$
\end{theorem}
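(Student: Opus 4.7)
The plan is to reduce Theorem \ref{mi} to a direct application of the main theorem (Theorem \ref{DT=IC}) combined with the explicit motivic DT formula for loop quivers established in \cite{Reineke4}.

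First, I would verify the hypotheses of Theorem \ref{DT=IC} in this setting. For $Q^{(m)}$ the dimension vector lattice is $\ZZ$, so the antisymmetric form $\langle\cdot,\cdot\rangle$ vanishes identically and the (essentially unique) trivial stability is automatically generic. For $m\geq 2$ and $d\geq 1$, generic $m$-tuples of $d\times d$ matrices generate the full matrix algebra and hence are irreducible, so $\Msp^{st}_d\neq\emptyset$. Theorem \ref{DT=IC} thus yields
\[ \DT_d \;=\; \IC_c(\Msp_d^{(m)},\QQ) \]
in $\Ka_0(\MHM(\CC))[\LL^{-1/2}]$. Via the Hodge realization, the left-hand side can be computed from the motivic DT invariant $\DT_d^{mot}$.

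Second, I would invoke the formula of \cite{Reineke4} expressing $\DT_d^{mot}(Q^{(m)})$ as an explicit sum over cyclic classes of almost primitive sequences $C \in U_d^{ap}/C_d$, graded by $\deg C$. After accounting for the normalization $\ICS_{\Mst_d^{ss}}(\QQ)=\LL^{(d,d)/2}\QQ$ with $(d,d)=(1-m)d^2$ (so that the overall power of $\LL$ matches $\dim \Msp_d^{(m)} = (m-1)d^2 + 1$) and for the division by $\LL^{1/2}-\LL^{-1/2}$ built into the definition of $\DTS_\mu$, the formula simplifies to
\[ \DT_d \;=\; \LL^{((m-1)d^2+1)/2}\cdot \frac{1-\LL^{-1}}{1-\LL^{-d}}\sum_{C\in U_d^{ap}/C_d}\LL^{-\deg C}, \]
which, under $\LL^{1/2}\mapsto v$, is precisely the right-hand side of Theorem \ref{mi}.

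The final and genuinely delicate step is to pass from this $K_0$-identity to an identity of Poincar\'e polynomials of $\IH^\ast_c(\Msp_d^{(m)},\QQ)$. What is required is Hodge--Tate purity: each $\IH^p_c$ should be pure of weight $p$ and of Tate type, so that the substitution $\LL^{1/2}\mapsto v$ converts the motivic class coefficient-by-coefficient into the claimed intersection Betti polynomial. Tate concentration is forced by the fact that the class already lies in $\ZZ[\LL^{\pm 1/2}]$, leaving no room for non-Tate summands. Purity in each cohomological degree can be bootstrapped from the virtual smallness of the framed Hilbert--Chow morphism $\pi:\Msp_{f,d}^{ss}\to\Msp_d^{ss}$ (Theorem \ref{virtsmall}) together with the relative hard Lefschetz theorem, in the same spirit as the proof of Theorem \ref{DT=IC}: these exhibit $\IC_c(\Msp_d^{ss},\QQ)$ as a direct summand of $\pi_!\ICS_{\Msp_{f,d}^{ss}}(\QQ)$ with strictly controlled weights.

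The main obstacle is this last purity step, because $\Msp_d^{(m)}$ is affine rather than projective and purity of $\IH_c$ is not automatic. The argument must leverage the concrete form of the Reineke formula — specifically that it is a Laurent polynomial in $\LL^{\pm 1/2}$ with nonnegative integer coefficients after simplification, together with Saito's weight bounds on $\IH_c$ of singular varieties — to rule out cancellations between even- and odd-degree cohomology and thus upgrade the virtual identity in $K_0$ into an honest identity of Poincar\'e polynomials.
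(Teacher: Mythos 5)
Your reduction is exactly the one the paper intends (the paper gives no further detail than ``combine the main result with the formula of \cite{Reineke4}''): the trivial stability is generic because $Q^{(m)}$ is symmetric, $\Msp^{st}_d\neq\emptyset$ for $m\geq 2$, so Theorem \ref{DT=IC} gives $\DT_d=\IC_c(\Msp^{(m)}_d,\QQ)$ in $\Ka_0(\MHM(\CC))[\LL^{-1/2}]$, and the right-hand side of Theorem \ref{mi} is Reineke's formula for $\DT^{mot}_d$ after the normalizations you describe. You are also right to isolate the remaining issue: Theorem \ref{mi} asserts an identity of Poincar\'e polynomials, whereas the $\Ka_0$-identity only records the alternating sum $\sum_p(-1)^p[\IC^p_c]$, and since $\Msp^{(m)}_d$ is affine neither purity nor evenness of $\IC^*_c$ is automatic.

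However, the two justifications you offer for this purity step do not work as stated. That the class lies in $\ZZ[\LL^{\pm 1/2}]$ does not ``leave no room for non-Tate summands'': an impure or non-Tate piece occurring in two adjacent cohomological degrees cancels in $\Ka_0$ and is invisible there, and positivity of the coefficients of the Reineke formula cannot exclude this either. Likewise, virtual smallness together with relative hard Lefschetz and Saito's weight estimates only bound the weights of $H^p_c$ from one side ($\le p$); for a smooth non-proper variety this falls short of purity. The correct way to finish along the lines you indicate is to use the decomposition theorem for the projective morphism $\pi$ to exhibit $\IC^*_c(\Msp^{(m)}_d,\QQ)$, up to a Tate twist, as a direct summand of $H^*_c(\Msp^{ss}_{f,d},\QQ)$, and then to supply the missing input: the framed moduli space $\Msp^{ss}_{f,d}$ of the loop quiver is a noncommutative Hilbert scheme and admits an affine cell decomposition, so its compactly supported cohomology is pure, of Tate type, and concentrated in even degrees; the direct summand inherits all three properties, and the substitution $\LL^{1/2}\mapsto v$ then really does produce the Betti polynomial. (Alternatively, purity can be deduced from the contracting $\GG_m$-action on the cone $\Msp^{(m)}_d$.) With that ingredient added, your argument is complete.
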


\section{Proof of Theorem \ref{virtsmall}}

\subsection{The stack of nilpotent quiver representations}

As before, let $Q$ be a finite quiver and $d\in\NN^{Q_0}$ a dimension vector for $Q$. Consider the action of the linear algebraic group $G_d$ on the vector space $R_d$. Let $p:R_d\rightarrow R_d/\!\!/G_d$ be the invariant-theoretic quotient; in other words, $R_d/\!\!/G_d$ is the spectrum of the ring of $G_d$-invariants in $R_d$, which, by \cite{LeBruyn-Procesi}, is generated by traces along oriented cycles in $Q$. We consider the nullcone of the representation of $G_d$ on $R_d$, that is,
$$N_d:=p^{-1}(p(0)).$$

By a standard application of the Hilbert criterion (see \cite[Chapter 6]{LeBruyn} for a much finer analysis of the geometry of $N_d$ using the Hesselink stratification), we can characterize points in $N_d$ either as those representations such that every cycle is represented by a nilpotent operator, or as those representation admitting a composition series by the one-dimensional irreducible representations $S_i$ concentrated at a single vertex $i\in Q_0$ (and with all loops at $i$ represented by $0$).\\
The main observation of this section is that, under the assumption of $Q$ being symmetric, there is an effective estimate for the dimension of $N_d$.

\begin{theorem}\label{nilpotent stack dimension} If $Q$ is symmetric, we have
$$\dim N_d-\dim G_d\leq -\frac{1}{2}( d,d)+\frac{1}{2}\sum_{i\in Q_0}( i,i) d_i-|d|.$$
\end{theorem}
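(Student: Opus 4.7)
The plan is to cover $N_d$ by finitely many images of Springer-type proper maps, bound the dimension of each piece, and then use the symmetry hypothesis to collapse the bound into a clean expression in terms of the Euler form.

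By the Hilbert--Mumford criterion recalled just before the theorem, a point $V\in R_d$ lies in $N_d$ if and only if the associated representation admits a composition series in $\KK Q\rep$ all of whose factors are one-dimensional simples $S_i$. Such a series is determined by an ordered sequence $\underline{w}=(w_1,\dots,w_{|d|})\in Q_0^{|d|}$ satisfying $\#\{k:w_k=i\}=d_i$ for every $i\in Q_0$. Fixing $\underline{w}$, let $B_{\underline{w}}\subset G_d$ be the stabilizer of a reference flag $0=W_0\subset W_1\subset\dots\subset W_{|d|}=V$ of type $\underline{w}$, and $N_{\underline{w}}\subset R_d$ the linear subspace of representations that preserve the flag and induce zero on each graded piece. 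Then the proper map
\[G_d\times^{B_{\underline{w}}}N_{\underline{w}}\longrightarrow R_d\]
has closed image equal to the set of representations admitting a composition series of type $\underline{w}$, and $N_d$ is the finite union of these images. Hence
\[\dim N_d-\dim G_d\;\le\;\max_{\underline{w}}\bigl(\dim N_{\underline{w}}-\dim B_{\underline{w}}\bigr).\]

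Next I would compute both dimensions explicitly. The stabilizer splits as a product of Borel subgroups, giving $\dim B_{\underline{w}}=\sum_i\binom{d_i+1}{2}$. For $\dim N_{\underline{w}}$ one has to analyze the flag compatibility condition arrow by arrow. The subtle point is the loop/non-loop distinction: for a loop $\alpha\colon i\to i$ one needs $\alpha(W_k)\subset W_{k-1}$ at vertex $i$ (the simple $S_i$ must act by zero on itself), whereas for $\alpha\colon i\to j$ with $i\ne j$ only $\alpha(W_k)\subset W_k$ is required. Nevertheless, since basis indices attached to distinct vertices are automatically distinct, both conditions translate uniformly into the strict inequality $l<k$ on allowed matrix-entry positions, yielding
\[\dim N_{\underline{w}}\;=\;\sum_{l<k}a_{w_k,w_l},\qquad a_{ij}:=\#\{\alpha\colon i\to j\}.\]

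The final step is combinatorial and is where the symmetry $a_{ij}=a_{ji}$ enters decisively. Regrouping the sum by the \emph{unordered} label pair $\{w_l,w_k\}$, the two orderings of a pair with distinct labels $i\ne j$ jointly contribute $a_{ij}d_id_j$, while equal labels $i=j$ contribute $a_{ii}\binom{d_i}{2}$, so
\[\dim N_{\underline{w}}\;=\;\sum_{i<j}a_{ij}d_id_j+\sum_i a_{ii}\binom{d_i}{2}\]
independently of $\underline{w}$. Substituting this and $\dim B_{\underline{w}}$ into the previous estimate and rewriting via $(d,d)=\sum_i d_i^2-\sum_{i,j}a_{ij}d_id_j$ together with $(i,i)=1-a_{ii}$ reproduces exactly the right-hand side of the theorem. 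I expect the main obstacle to be the second step: verifying that the loop and non-loop flag conditions produce the \emph{same} counting rule $l<k$; without this uniformity the dimension estimate would still depend on $\underline{w}$ and the symmetry of $Q$ would not suffice to collapse it.
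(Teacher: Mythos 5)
Your proposal is correct and follows essentially the same route as the paper: the paper covers $N_d$ by the closed subvarieties $R_{d^*}$ attached to \emph{thin} decompositions of $d$ (which are exactly your words $\underline{w}$), bounds each as the collapsing of a homogeneous bundle over a flag variety (your $G_d\times^{B_{\underline{w}}}N_{\underline{w}}\to R_d$), and then uses symmetry of $Q$ to make the bound independent of the decomposition. Your explicit resolution of the loop/non-loop flag condition and the regrouping by unordered index pairs is just an unpacked version of the paper's identity $\sum_{k<l}(d^l,d^k)=\tfrac12(d,d)-\tfrac12\sum_k(d^k,d^k)$, so the argument is sound as written.
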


\begin{proof} For a decomposition $d=d^1+\ldots+d^s$, denoted with $d^*$, we consider the closed subvariety $R_{d^*}$ of $R_d$ consisting of representations $V$ admitting a filtration $0=V_0\subset V_1\subset \ldots\subset V_s=V$ by subrepresentations, such that $V_k/V_{k-1}$ equals the zero representation of dimension vector $d^k$ for all $k=1,\ldots,s$. This subvariety being the collapsing of a homogeneous bundle over a variety of partial flags in $\bigoplus_{i\in Q_0}\KK^{d_i}$, its dimension is easily estimated as
$$\dim R_{d^*}\leq \dim G_d-\sum_{k<l}( d^l,d^k)-\sum_{i\in Q_0}\sum_k(d^k_i)^2.$$
The above characterization of $N_d$ allows us to write $N_d$ as the union of all $R_{d^*}$ for decompositions $d^*$ which are thin, that is, all of whose parts are one-dimensional (one-dimensionality is obscured by the notation to avoid multiple indexing and to make the argument more transparent). Thus $\dim N_d-\dim G_d$ is bounded from above by the maximum of the values
$$-\sum_{k<l}( d^l,d^k)-\sum_{i\in Q_0}\sum_k(d^k_i)^2$$
over all thin decompositions. Since $Q$ is symmetric, we can rewrite
$$\sum_{k<l}( d^l,d^k)=\frac{1}{2}( d,d)-\frac{1}{2}\sum_k( d^k,d^k).$$
All $d^k$ being one-dimensional, we can easily rewrite
$$\sum_{i\in Q_0}\sum_k(d^k_i)^2=|d|,\;\;\;
\sum_k( d^k,d^k)=\sum_{i\in Q_0}( i,i) d_i.$$
All terms now being independent of the chosen thin decomposition, we arrive at the required estimate.
\end{proof}

\subsection{Virtual smallness of the Hilbert--Chow map}

We consider again the Hilbert--Chow map $\pi:\Msp_{f,d}^{ss}\rightarrow\Msp_d^{ss}$ forgetting the framing datum; our aim is to prove a strong dimension estimate for its fibers when the stability is $\mu$-generic (cf.\ section 2.2) for $\mu$ being the slope of $d$.\\
We consider the Luna stratification of $\Msp_d^{ss}$: a decomposition type $\xi$ for $d$ consists of a sequence $((d^1,m_1),\ldots,(d^s,m_s))$ in $\Lambda_\mu\times\NN$ such that $\sum_km_kd^k=d$. Inside the moduli space $\Msp_d^{ss}$ parameterizing isomorphism classes of polystable representations of dimension vector $d$, we can consider the subset $S_\xi$ of representations of the form $\bigoplus_k E_k^{m_k}$ for pairwise non-isomorphic stable representations $E_k$ of dimension vector $d^k$ and slope $\mu$. We thus have
$$\dim S_\xi=\sum_k\dim \Msp^{st}_{d^k}(Q)=s-\sum_k( d^k,d^k).$$
By \cite{Reineke1}, $S_\xi$ is locally closed, and the map $\pi$ is \'etale locally trivial over $S_\xi$. We fix a point $x\in S_\xi$. This stratum being nonempty, $\Msp_{d^k}^{st}(Q)$ is nonempty, and thus $( d^k,d^k)=1-\dim \Msp_{d^k}^{st}(Q)\leq 1$ for all $k$. The fiber $\pi^{-1}(x)$ over a point $x\in S_\xi$ can be described as follows:\\
Define the local quiver $Q_\xi$ with vertices $i_1,\ldots,i_s$ and $\delta_{kl}-( d^k,d^l)$ arrows from $i_k$ to $i_l$. Define a local dimension vector $d_\xi$ for $Q_\xi$ by $(d_\xi)_{i_k}=m_k$, and a local framing datum $f_\xi$ by $(f_\xi)_{i_k}=f\cdot d^k$. We consider the trivial stability on $Q_\xi$. Then we have a local Hilbert--Chow map $$\pi_\xi:\Msp_{f_\xi,d_\xi}^{ssimp}(Q_\xi)\rightarrow \Msp_{d_\xi}^{ssimp}(Q_\xi)=R_{d_\xi}/\!\!/G_{d_\xi}.$$
We denote the fiber over the class of the zero representation by $M_{f_\xi,d_\xi}^{nilp}(Q_\xi).$ Then, by \cite{Reineke1}, we have
$$\pi^{-1}(x)\simeq M_{f_\xi,d_\xi}^{nilp}(Q_\xi).$$
By construction, we have
$$\dim M_{f_\xi,d_\xi}^{nilp}(Q_\xi)=\dim N_{d_\xi}-\dim G_{d_\xi}+f_\xi\cdot d_\xi.$$
Now assume $\zeta$ to be $\mu$-generic, thus $Q_\xi$ is symmetric, and Theorem \ref{nilpotent stack dimension} estimates the dimension of the fiber $\pi^{-1}(x)$ as
$$\dim\pi^{-1}(x)=\dim\Msp_{f_\xi,d_\xi}^{nilp}(Q_\xi)=\dim N_{d_\xi}-\dim G_{d_\xi}+f_\xi\cdot d_\xi\leq$$
$$\leq -\frac{1}{2}( d_\xi,d_\xi)_{Q_\xi}+\frac{1}{2}\sum_k( i_k,i_k)_{Q_\xi}(d_\xi)_{i_k}-|d_\xi| + f_\xi\cdot d_\xi.$$
Using the definition of $Q_\xi$, $d_\xi$ and $f_\xi$, this simplifies to

$$\dim\pi^{-1}(x)\leq -\frac{1}{2}( d,d)+\frac{1}{2}( d^k,d^k) m_k-\sum_km_k+f\cdot d.$$
On the other hand, we can rewrite the dimension formula for $S_\xi$ as
$${\rm codim} S_\xi=-( d,d)+\sum_k( d^k,d^k)+1-s.$$
The inequality
$$\dim \pi^{-1}(x)-(f\cdot d-1)\leq\frac{1}{2}{\rm codim} S_\xi$$
(with equality only if $0:=\xi=((d,1))$) claimed in Theorem \ref{virtsmall} can thus be rewritten as
$$-\frac{1}{2}( d,d)+\frac{1}{2}\sum_k( d^k,d^k) m_k-\sum_km_k+1\leq -\frac{1}{2}( d,d)+\frac{1}{2}\sum_k( d^k,d^k)+\frac{1}{2}(1-s).$$
This is easily simplified to
$$\frac{1}{2}\sum_k(( d^k,d^k)-2)(m_k-1)\leq\frac{1}{2}(s-1).$$
Since $( d^k,d^k)\leq 1$, the left hand side is nonpositive, whereas the right hand side is nonnegative. Equality holds if both sides are zero, thus $s=1$, proving virtual smallness.

\section{Motivic DT-theory and the integrality conjecture}

We prove a stronger version of Theorem \ref{intconj} for arbitrary ground fields $\kk$ with $\Char(\kk)=0$  and not necessarily closed $\kk$-points. Since it is not  clear how to deal with mixed Hodge modules on varieties defined over arbitrary  fields, we will work in the motivic world using motivic functions instead of mixed Hodge modules. The reader not familiar with motivic functions might have a look at \cite{JoyceMF}, where motivic functions are called stack functions. However, we will also recall the main definitions below.  The machinery used to define Donaldson--Thomas functions will also work in this more general context, and we prove a couple of useful formulas.  There is a $\lambda$-ring homomorphism from
\[\underline{\Ka}_0(\Var/\Msp^{ss})[\LL^{-1/2}, (\LL^r-1)^{-1}: r\ge 1] \]
to
\[ \underline{\Ka}_0(\MHM(\Msp^{ss}))[\LL^{-1/2}, (\LL^r-1)^{-1}: r\ge 1],\] induced by $[X\xrightarrow{q} \Msp]\mapsto q_! \QQ$, giving rise to corresponding results for mixed Hodge modules. As we will discuss at the end of this section, working with motivic functions has also some limitations.

\subsection{Motivic functions}

Given an arbitrary Artin stack or scheme $\BB$ with connected components being of finite type over\footnote{In practice, $\KK$ will be our ground field $\kk$ or some extension of $\kk$.} $\KK$, we define the Grothendieck group $\Ka_0(\Var/\BB)$ to be the free abelian group generated by isomorphism classes $[\mathcal{X} \rightarrow \BB]$ of representable morphisms of finite type such that  $\mathcal{X}$ has a locally finite stratification by quotient stacks $\mathcal{X}_i=X_i/\Gl_\KK(n_i)$, subject to the cut and paste relation 
\[ [\mathcal{X} \rightarrow \BB] = [\mathcal{Z} \rightarrow \BB]+ [\mathcal{X}\setminus \mathcal{Z} \rightarrow \BB], \]
for every closed substack $\mathcal{Z}\subset \mathcal{X}$. In particular, $[\mathcal{X}\to \BB]=[\mathcal{X}_{red}\to \BB]$.
\begin{remark} \rm \label{reduction_to_affine_case}
Using the cut and paste relation we arrive at the conclusion that if $\BB=\Spec B$ as an affine scheme of a finitely generated $\KK$-algebra $B$, the group $\Ka_0(\Var/\Spec B)$ can also be described as the  abelian group generated by symbols $[A]$ for each finitely generated $B$-algebras $A$ subject to the following two conditions.
\begin{enumerate}
\item If $A\cong A'$ as $B$-algebras, then $[A]=[A']$.
\item If $a_1,\ldots,a_r\in A$ is a finite set of elements, then 
\[ [A]=[A/(a_1,\ldots a_r)] \quad + \sum_{\emptyset\not= J\subset \{1,\ldots,r\}} (-1)^{|J|-1} [A_{\prod_{j\in J} a_j}].\]
\end{enumerate}
\end{remark}
The fiber product over $\KK$ defines a ring structure on $\Ka_0(\Var/\KK)$ and a $\Ka_0(\Var/\KK)$-module structure on $\Ka_0(\Var/\BB)$. Taking the product over $\KK$ defines an exterior product $\boxtimes:\Ka_0(\Var/\BB)\times \Ka_0(\Var/\BB')\longrightarrow \Ka_0(\Var/\BB\times_\KK \BB')$. Let us also introduce the module 
\[ \Ka_0(\Var/\BB)[\LL^{-1/2}, (\LL^r-1)^{-1}: r\ge 1]:=\Ka_0(\Var/\BB)\otimes_{\ZZ[\LL]} \ZZ[\LL^{-1/2}, (\LL^r-1)^{-1} : r\ge 1]\] 
with $\LL$ denoting the Lefschetz motive $\LL:=[\mathbb{A}^1_\KK]\in \Ka_0(\Var/\KK)$.\footnote{For $\BB=\Spec \KK$, we simplify the notation by suppressing the structure morphism to $\Spec \KK$.}  We will also add the relations 
\begin{equation} \label{principal_bundle_relation} [X/\Gl_\KK(n) \rightarrow \BB]= [X\rightarrow \BB]/[\Gl_\KK(n)] 
\end{equation}
for every $\Gl_n$-action on a scheme $X$. Here, $[\Gl_\KK(n)]=\LL^{n \choose 2}\prod_{r=1}^n (\LL^r-1)$. In particular, due to our assumption on $\mathcal{X}$ for a generator $[\mathcal{X}\to \BB]$, the group $\Ka_0(\Var/\BB)[\LL^{-1/2}, (\LL^r-1)^{-1}: r\ge 1]$ is generated as a $\ZZ[\LL^{-1/2}, (\LL^r-1)^{-1}: r\ge 1]$-module by morphisms $[X\rightarrow \BB]$, with $X$ being a scheme. Because of this and  Lemma 3.9 in \cite{Bridgeland10} which easily generalizes to the relative situation, the proper push forward $\phi_!$ along morphisms $\phi:\BB \rightarrow \BB'$ such that $\pi_0(\phi):\pi_0(\BB)\to \pi_0(\BB')$ has finite fibers is well defined by composition $\phi_!([X\rightarrow \BB])=[X\rightarrow \BB']$. \\
We can also define \[\phi^\ast:\Ka_0(\Var/\BB')[\LL^{-1/2},(\LL^r-1)^{-1}:r\ge 1] \longrightarrow  \Ka_0(\Var/\BB)[\LL^{-1/2},(\LL^r-1)^{-1}:r\ge 1] \]
for all $\phi:\BB\to \BB'$ via $\phi^\ast([\mathcal{X}\to \BB'])=[\mathcal{X}\times_{\BB'}\BB \to \BB]$ on generators. This definition makes even sense if $\BB$ and $\BB'$ are defined over different ground fields $\KK$ and $\KK'$. We will also introduce the group
\begin{eqnarray*} \lefteqn{\underline{\Ka}_0(\Var/\BB)[\LL^{-1/2}, (\LL^r-1)^{-1}: r\ge 1]} & & \\ &=&\prod_{\BB_i\in \pi_0(\BB)} \Big(\Ka_0(\Var/\BB_i)[\LL^{-1/2}, (\LL^r-1)^{-1}: r\ge 1]\Big). \end{eqnarray*}
The pull-back and the push-forward satisfy some base change formula for every cartesian square. Moreover, for every quotient stack $\rho:X\to X/G$ with $G$ being a special linear algebraic group, the formula 
\begin{equation} \label{push_pull} \rho_!\rho^\ast(f)=[G]\cdot f
\end{equation}
holds for all $f\in \underline{\Ka}_0(\Var/\BB)[\LL^{-1/2},(\LL^r-1)^{-1}:r\ge 1]$, and $[G]$ is invertible. Indeed, if $[Y\xrightarrow{u} X/G]$ is a generator, then $\rho_!\rho^\ast[Y\to X/G]=[Y\times_{X/G} X \longrightarrow Y \longrightarrow X/G]$ with $P=Y\times_{X/G}X$  being  a principal $G$-bundle on $Y$. As $G$ is special, $P\to Y$  is Zariski locally trivial, and $[P\to Y]=[G][Y\to Y]$ follows in $\underline{\Ka}_0(\Var/Y)[\LL^{-1/2},(\LL^r-1)^{-1}:r\ge 1]$. Hence, 
\[\rho_!\rho^\ast([Y\to X/G])=[P\to Y\xrightarrow{u} X/G]= u_!([P\to Y])=[G][Y\to X/G]. \] 
The principal $G$-bundle $\Gl_\KK(n)\to \Gl_\KK(n)/G$ is Zariski locally trivial and $[\Gl_\KK(n)]=[G][\Gl_\KK(n)/G]$ is invertible proving the invertibility of $[G]$.

\subsection{$\lambda$-ring structures}

If the base $\BB$ is a scheme and has an additional structure of a commutative monoid with zero $\Spec \KK \xrightarrow{\;0\;}\BB$ and sum $\oplus:\BB\times_\KK \BB \rightarrow \BB$, then $\Ka_0(\Var/\BB)$ can be equipped with the structure of a $\lambda$-ring by putting
\begin{eqnarray*} [X\rightarrow \BB]\cdot [Y\rightarrow \BB] &:=& [X\times_\KK Y \rightarrow \BB\times_\KK \BB \xrightarrow{\;\oplus\;} \BB] \;\mbox{ and} \\
\sigma^n([X \rightarrow \BB]) &:=& [ \Sym_\KK^n(X) \rightarrow \Sym_\KK^n(\BB) \xrightarrow{\;\oplus\;} \BB] \end{eqnarray*}
with $\Sym^n_\KK(X)=X^{\times_\KK n}/\!\!/S_n$. On can extend the $\lambda$-ring structure to $\Ka_0(\Var/\BB)[\LL^{-1/2}, (\LL^r-1)^{-1}: r\ge 1]$ by defining $-\LL^{1/2}$ to be a line element, that is, $\sigma^n(-\LL^{1/2}):=(-\LL^{1/2})^n$. Moreover, the $\lambda$-ring structure extends to $\underline{\Ka}_0(\Var/\BB)[\LL^{-1/2}, (\LL^r-1)^{-1}: r\ge 1]$.\\
Given a motivic function $f\in \underline{\Ka}_0(\Var/\BB)[\LL^{-1/2}, (\LL^r-1)^{-1}: r\ge 1]$ such that $\sigma^n(f)|_{\BB_i}$ vanishes for all but finitely many $n\in \NN$ depending on the connected component $\BB_i$ of $\BB$, the sum 
\[ \Sym(f):=\sum_{n\ge 0} \sigma^n(f) \]
is well defined in $\underline{\Ka}_0(\Var/\BB)[\LL^{-1/2}, (\LL^r-1)^{-1}: r\ge 1]$ and satisfies $\Sym(0)=1=[\Spec \KK \xrightarrow{\;0\;}\BB]$ as well as $\Sym(f+g)=\Sym(f)\cdot\Sym(g)$.\\

Formation of (direct) sums of semisimple objects in $\KK Q\rep$ and dimension vectors in $\NN^{Q_0}$, provides $\Msp^{ss}_\mu$ and $\NN^{Q_0}\times\Spec\KK$ with the structure of a commutative monoid inducing a $\lambda$-ring structure on $\underline{\Ka}_0(\Var/\Msp^{ss}_\mu)[\LL^{-1/2}, (\LL^r-1)^{-1}: r\ge 1]$ and on $\underline{\Ka}_0(\Var/\NN^{Q_0}\times \Spec\KK)[\LL^{-1/2}, (\LL^r-1)^{-1}: r\ge 1]$. Notice that the latter $\lambda$-ring is isomorphic to the $\lambda$-ring 
$\Ka_0(\Var/\KK)[\LL^{-1/2}, (\LL^r-1)^{-1}: r\ge 1][[t_i:i\in Q_0]]$
of power series.
If a motivic function $f$ on $\Msp^{ss}_\mu$, respectively on $\Lambda_\mu\times\Spec\KK$, is supported away from the zero representation, the infinite sum $\Sym(f)$ is 
well defined.

\begin{lemma} \label{lambda_pull_back} 
Let $M$ and $N$ be commutative monoids in the category of schemes over fields $\kk$ and $\KK\supset \kk$ respectively of characteristic zero. Assume that $\iota:N\to M$ induces a homomorphism $N\longrightarrow M\otimes_\kk \Spec\KK$ (over $\KK$) of commutative monoids over $\KK$ such that the map $u_n$ in the  diagram
\[
\xymatrix @C=2cm{ N^{\times_{\KK} n} \ar[r]^(0.4){u_n}  \ar[dr]_{\oplus} & N\times_M M^{\times_{\kk} n} \ar[r] \ar[d] & M^{\times_\kk n} \ar[d]^\oplus \\ & N \ar[r]^\iota & M }
\]
is a closed embedding and an isomorphism between geometric points for every $n\in \NN$. Then $\iota^\ast(fg)=\iota^\ast(f)\iota^\ast(g)$ and $\iota^\ast(\sigma^n(f))=\sigma^n(\iota^\ast(f))$ for all $n\in \NN$ and all $f,g\in \underline{\Ka}_0(\Var/M)[\LL^{-1/2},(\LL^r-1)^{-1}: r\ge 1]$.
\end{lemma}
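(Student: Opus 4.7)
The plan is to verify both identities on generators $f=[X\to M]$, $g=[Y\to M]$ and then extend by bi-additivity, $\ZZ[\LL^{\pm 1/2}]$-linearity, and the $\lambda$-identity $\lambda_t(f+g)=\lambda_t(f)\lambda_t(g)$. The entire argument will be driven by one geometric input: in characteristic zero, a scheme of finite type with no geometric points must be empty, so a closed embedding which is a bijection on geometric points has empty open complement and is therefore an isomorphism of reduced schemes. Applying this to $u_n$ yields the fundamental identity
\[
[N^{\times_\KK n}\to N]\;=\;[N\times_M M^{\times_\kk n}\to N]
\]
in $\underline{\Ka}_0(\Var/N)[\LL^{-1/2},(\LL^r-1)^{-1}:r\ge 1]$, as cut-and-paste renders the Grothendieck group insensitive to nilpotents. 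Moreover this identity is $S_n$-equivariant in the natural way: $S_n$ permutes the $n$ factors of $M^{\times_\kk n}$ and of $N^{\times_\KK n}$, and acts trivially on the distinguished $N$-factor.

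For multiplicativity I compute on generators. The product $fg$ is $[X\times_\kk Y\to M]$ with structure map through $\oplus_M$, so
\[
\iota^\ast(fg)=[(X\times_\kk Y)\times_M N\to N]=[(X\times_\kk Y)\times_{M\times_\kk M}(N\times_M M^{\times_\kk 2})\to N].
\]
Replacing $N\times_M M^{\times_\kk 2}$ by $N^{\times_\KK 2}$ via $u_2$ and using the tautological identification
\[
(X\times_\kk Y)\times_{M\times_\kk M}(N\times_\KK N)\;=\;(X\times_M N)\times_\KK(Y\times_M N)
\]
yields $\iota^\ast(fg)=\iota^\ast(f)\iota^\ast(g)$ on generators, which then extends by $\ZZ$-linearity in each variable.

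For the compatibility with $\sigma^n$ I apply the same reasoning with the $n$-fold product in place of a pair: base-changing $u_n$ along $X^{\times_\kk n}\to M^{\times_\kk n}$ produces, in $\underline{\Ka}_0(\Var/N)$, the $S_n$-equivariant identity
\[
[X^{\times_\kk n}\times_M N\to N]\;=\;[(X\times_M N)^{\times_\KK n}\to N].
\]
I then take $S_n$-quotients of both sides. The $S_n$-equivariant closed embedding obtained by base-changing $u_n$ still has empty complement and hence is an isomorphism of reduced schemes; using the Reynolds operator, this descends to an isomorphism of reduced GIT quotients. Combined with the Reynolds-operator identity $(X^{\times_\kk n}\times_M N)/\!\!/S_n=(X^{\times_\kk n}/\!\!/S_n)\times_M N$, which uses that $S_n$ acts trivially on the $N$-factor, this gives $\iota^\ast\sigma^n(f)=\sigma^n(\iota^\ast f)$ on generators. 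The extension to arbitrary $f\in\underline{\Ka}_0(\Var/M)[\LL^{-1/2},(\LL^r-1)^{-1}:r\ge 1]$ is formal: the multiplicativity of $\lambda_t$ together with the already-established ring-homomorphism property of $\iota^\ast$ reduces everything to generators and to the identity $\sigma^n(-\LL^{1/2})=(-\LL^{1/2})^n$, which is preserved because $\iota^\ast(\LL)=\LL$.

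The principal obstacle will be the last step, verifying that taking $S_n$-quotients really commutes with pullback along $\iota$. Two separate facts must be combined: first, that an $S_n$-equivariant closed embedding descends to a closed embedding of GIT quotients (surjectivity of invariants follows from the Reynolds operator); second, that $S_n$-invariants commute with the base change $-\otimes_A B$ when $S_n$ acts trivially on $B$. Both points require characteristic zero, and this is the only place where the characteristic-zero hypothesis on $\kk$ (and $\KK$) is genuinely used. Once these two technical points are secured, the remainder of the proof is simply a matter of matching fiber-product descriptions.
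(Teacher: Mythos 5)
Your proposal is correct and follows essentially the same route as the paper: on generators one base-changes $u_n$ along $X^{\times_\kk n}\to M^{\times_\kk n}$, uses that a closed embedding bijective on geometric points is an isomorphism of reductions (hence an equality in the Grothendieck group), and passes to $S_n$-quotients using linear reductivity of $S_n$ in characteristic zero to commute invariants with the base change $N\to M$ and to descend the closed embedding. The paper likewise proves only the $\sigma^n$-compatibility on generators via the induced maps $u'_n$ and $u''_n$ and leaves the remaining (multiplicativity and the formal extension) to the reader, which you carry out consistently.
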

\begin{proof} We will show $\iota^\ast(\sigma^n(f))=\sigma^n(\iota^\ast(f))$ for a generator $[X\to M]$ and leave the rest to the reader. By definition, $\iota^\ast([X\to M])=[Y\to N]$ using the shorthand $Y:=N\times_M X$. By the properties of $u_n$,  the map $u'_n$ in the diagram 
\[
\xymatrix @C=2cm { Y^{\times_\KK n} \ar[d] \ar[r]^{u'_n} &  N\times_M X^{\times_\kk n} \ar[r] \ar[d] & X^{\times_\kk n} \ar[d] \\ \Sym^n_\KK(Y) \ar[dr] \ar[r]^{u''_n} & N\times_M \Sym_\kk^n(X) \ar[d] \ar[r] & \Sym_\kk^n(X) \ar[d] \\ & N \ar[r]^\iota   & M }
\]
is also a closed embedding inducing an isomorphism between geometric points. By general GIT-theory, $N\times_M \Sym_\kk^n(X)$ is the categorical quotient of $N\times_M X^{\times_\kk n}$ with respect to the induced $S_n$-action. It can be computed Zariski locally by taking $S_n$-invariant functions. As $\Char(\KK)=0$, $S_n$ acts linearly reductive on $\KK$-vector spaces, and the map $u''_n$ must also be a closed embedding. Since $u'$ induces a bijection between geometric points, the same must hold for $u''_n$ and $\Sym^n(Y)_{red}\cong (N\times_M\Sym_\kk^n(X))_{red}$ follows.  Thus, $\iota^\ast([\Sym^n_\kk(X)\to M])=[N\times_M\Sym^n_\kk(X)\longrightarrow N]=[\Sym^n_\KK(Y)\longrightarrow N]$.
\end{proof}

\subsection{Convolution product and integration map}

Throughout the next three subsections, all schemes and stacks are defined over a  field $\KK$ which might be an extension of  another fixed ground field $\kk$. Unless otherwise stated, cartesian products are taken over $\Spec\KK$.
We define a ``convolution'' product, the so-called Ringel--Hall product, on $\underline{\Ka}_0(\Var/\Mst^{ss}_\mu)[\LL^{-1/2},(\LL^r-1)^{-1}:r\ge 1]$ by means of the following diagram
\[ \xymatrix { & {\EE xact^{ss}_\mu} \ar[dr]^{\pi_2} \ar[dl]_{\pi_1\times\pi_3} & \\ \Mst^{ss}_\mu \times \Mst^{ss}_\mu & & \Mst^{ss}_\mu } \]
via $f\ast g :=\pi_{2\, !}(\pi_1\times \pi_3)^\ast(f\boxtimes g)$, where $\EE xact^{ss}_\mu$ denotes the stack of short exact sequences $0\to V_1 \to V_2 \to V_3 \to 0$ of semistable representations of slope $\mu$, and $\pi_i$ maps such a sequence to its $i$-th entry. It is well-known that the convolution product provides $\underline{\Ka}_0(\Var/\Mst^{ss}_\mu)[\LL^{-1/2},(\LL^r-1)^{-1}:r\ge 1]$ with a $\Ka_0(\Var/\kk)[\LL^{-1/2},(\LL^r-1)^{-1}:r\ge 1]$-algebra structure with unit given by the motivic function $[\Spec\kk \xrightarrow{\;0\;} \Mst^{ss}_\mu]$.

\begin{lemma} \label{integration_map}
The ``integration'' map 
\[I^{ss}_\mu:\underline{\Ka}_0(\Var/\Mst^{ss}_\mu)[\LL^{-1/2},(\LL^r-1)^{-1}:r\ge 1] \longrightarrow \underline{\Ka}_0(\Var/\Msp^{ss}_\mu)[\LL^{-1/2},(\LL^r-1)^{-1}:r\ge 1] \]
given by $I^{ss}_\mu(f):=\sum_{d\in \Lambda_\mu} \LL^{(d,d)/2} p_{d\, !}(f|_{\Mst^{ss}_d})$ is a $\Ka_0(\Var/\kk)[\LL^{-1/2},(\LL^r-1)^{-1}:r\ge 1]$-algebra homomorphism with respect to the convolution product if $\zeta$ is $\mu$-generic. 
\end{lemma}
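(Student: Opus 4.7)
The plan is to verify the algebra homomorphism property on generators, tracking the power of $\LL$ that arises. I would reduce to $f=[X\to\Mst^{ss}_d]$ and $g=[Y\to\Mst^{ss}_e]$ with $d,e\in\Lambda_\mu$; preservation of the unit is immediate from $(0,0)=0$ and $p_0$ being an isomorphism, so the task is to show $I^{ss}_\mu(f\ast g)=I^{ss}_\mu(f)\cdot I^{ss}_\mu(g)$.

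First I would observe the commutativity of
\[
\xymatrix@C=1cm@R=0.7cm{
\EE xact^{ss}_{d,e}\ar[r]^{\pi_2}\ar[d]_{\pi_1\times\pi_3} & \Mst^{ss}_{d+e}\ar[dd]^{p_{d+e}} \\
\Mst^{ss}_d\times\Mst^{ss}_e\ar[d]_{p_d\times p_e} & \\
\Msp^{ss}_d\times\Msp^{ss}_e\ar[r]_\oplus & \Msp^{ss}_{d+e}
}
\]
since the Jordan--H\"older factors of the middle term of a short exact sequence of semistables of slope $\mu$ are the disjoint union of those of the outer terms. Combined with the defining formula $F\cdot G=\oplus_!(F\boxtimes G)$ for the $\lambda$-ring product on $\underline{\Ka}_0(\Var/\Msp^{ss}_\mu)[\LL^{-1/2},(\LL^r-1)^{-1}:r\ge 1]$, this reduces the statement to the key identity
\[
(\pi_1\times\pi_3)_!(\pi_1\times\pi_3)^* h \;=\; \LL^{-(e,d)}\,h
\]
for every motivic function $h$ on $\Mst^{ss}_d\times\Mst^{ss}_e$.

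To prove the key identity I would factor $\pi_1\times\pi_3=\Phi_2\circ\Phi_1$ using the presentations $\EE xact^{ss}_{d,e}=[R^{ss}_{d,e}/G_{d,e}]$ and the Levi decomposition $G_{d,e}=U\rtimes(G_d\times G_e)$ with $U$ unipotent of dimension $u=\sum_i d_ie_i$. Here $\Phi_1$ is induced by the $G_{d,e}$-equivariant trivial vector bundle $R^{ss}_{d,e}\to R^{ss}_d\times R^{ss}_e$ of rank $r=\sum_{\alpha:i\to j}d_je_i$, and $\Phi_2$ is the change-of-group map $[(R^{ss}_d\times R^{ss}_e)/G_{d,e}]\to[(R^{ss}_d\times R^{ss}_e)/(G_d\times G_e)]$ along the surjection $G_{d,e}\twoheadrightarrow G_d\times G_e$. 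Equation (\ref{push_pull}) applied to the special group $G_{d,e}$, together with base change, yields $(\Phi_1)_!(\Phi_1)^*=\LL^r\cdot\mathrm{id}$. The Levi inclusion provides a section of $\Phi_2$, making it a trivial $BU$-gerbe; since $U$ is special and $[BU]=\LL^{-u}$, one obtains $(\Phi_2)_!(\Phi_2)^*=\LL^{-u}\cdot\mathrm{id}$, so composing gives $\LL^{r-u}=\LL^{-(e,d)}$.

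Plugging this in and multiplying by $\LL^{(d+e,d+e)/2}$, the Euler-form expansion $(d+e,d+e)=(d,d)+(d,e)+(e,d)+(e,e)$ produces
\[
I^{ss}_\mu(f\ast g)=\LL^{\langle d,e\rangle/2}\,I^{ss}_\mu(f)\cdot I^{ss}_\mu(g).
\]
The $\mu$-generic hypothesis forces $\langle d,e\rangle=0$ for all $d,e\in\Lambda_\mu$, so the twist vanishes. The delicate step is the key identity, since $\pi_1\times\pi_3$ is neither a principal bundle nor a vector bundle on the stack level; the factorization through the intermediate $U$-gerbe and the careful application of the special-group push-pull formula is where the work lives.
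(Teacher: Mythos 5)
Your proof is correct and follows essentially the same route as the paper: the same commutative square reducing everything to the identity $(\pi_1\times\pi_3)_!(\pi_1\times\pi_3)^\ast=\LL^{-(e,d)}$, and the same final bookkeeping where $\mu$-genericity kills the twist $\LL^{\langle d,e\rangle/2}$. The only (cosmetic) difference is that you establish the key identity by factoring the stack map into an affine bundle followed by a trivial $BU$-gerbe, whereas the paper passes to the atlases and applies the push--pull formula to the three principal bundles with groups $G_{d,e}$, the additive group of extension data, and $G_d\times G_e$ --- the same $\LL^{r-u}$ computation in different packaging.
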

\begin{proof}
We use the notation of the following commutative diagram. 
\[
 \xymatrix { & X^{ss}_{d,d'} \ar[dl]_{\hat{\pi}_1\times\hat{\pi}_3} \ar@{^{(}->}[dr]^{\hat{\pi}_2} \ar[dd]^(0.3){\rho_{d,d'}}& \\
 X^{ss}_d\times X^{ss}_{d'} \ar[dd]_{\rho_d\times \rho_{d'}} & & X^{ss}_{d+d'} \ar[dd]^{\rho_{d+d'}}  \\
 & X^{ss}_{d,d'}/G_{d,d'} \ar[dl]^{\pi_1\times \pi_3} \ar[dr]_{\pi_2} & \\
 X^{ss}_d/G_d \times X^{ss}_{d'}/G_{d'} \ar[dd]_{p_d\times p_{d'}} & & X^{ss}_{d+d'}/G_{d+d'} \ar[dd]^{p_{d+d'}}\\ & & \\
 \Msp^{ss}_d\times \Msp^{ss}_{d'} \ar[rr]^\oplus & & \Msp^{ss}_{d+d'} }
\]

The first computation generalizes formula (\ref{push_pull}) to the map $\pi_1\times \pi_3$ by applying (\ref{push_pull}) to the principal bundles $\rho_{d,d'}$, $\hat{\pi}_1\times \hat{\pi}_3$ and $\rho_d\times \rho_{d'}$ with special linear structure groups \[G_{d,d'}, \quad\bigoplus_{Q_1\ni \alpha:i\to j} \Hom_\kk(\kk^{d'_i},\kk^{d_j}) \quad\mbox{and}\quad G_d\times G_{d'}.\]
For $h\in \underline{\Ka}_0(\Var/\Mst^{ss}_d\times\Mst^{ss}_{d'})[\LL^{-1/2},(\LL^r-1)^{-1}:r\ge 1]$ we get
\begin{eqnarray*}
(\pi_1\times \pi_3)_!(\pi_1\times \pi_3)^\ast(h)&=& \frac{1}{[G_{d,d'}]}(\pi_1\times \pi_3)_!\rho_{d,d' \,!}\rho_{d,d'}^\ast (\pi_1\times \pi_3)^\ast(h) \\
&=& \frac{1}{[G_{d,d'}]}(\rho_d\times \rho_{d'})_!(\hat{\pi}_1\times\hat{\pi}_3)_!((\hat{\pi}_1\times\hat{\pi}_3)^\ast(\rho_d\times \rho_{d'})^\ast(h) \\
&=& \frac{\LL^{dd'-(d',d)}}{[G_{d,d'}]}(\rho_d\times \rho_{d'})_!(\rho_d\times \rho_{d'})^\ast(h)\\
&=& \LL^{-(d',d)} h.
\end{eqnarray*}
Thus, for $f\in  \underline{\Ka}_0(\Var/\Mst^{ss}_d)[\LL^{-1/2},(\LL^r-1)^{-1}:r\ge 1]$ and $g\in  \underline{\Ka}_0(\Var/\Mst^{ss}_{d'})[\LL^{-1/2},(\LL^r-1)^{-1}:r\ge 1]$
\begin{eqnarray*}
I^{ss}_\mu(f\ast g) &=& \LL^{(d+d',d+d')/2} p_{(d+d')!}(f\ast g ) \\ 
&=& \LL^{(d,d)/2}\LL^{(d',d')/2} \LL^{(d',d)} (p_{d+d'}\pi_2)_!(\pi_1\times \pi_3)^\ast(f\boxtimes g)\\
&=& \LL^{(d,d)/2}\LL^{(d',d')/2} \LL^{(d',d)} \big(\oplus(p_d\times p_{d'})(\pi_1\times \pi_3)\big)_!(\pi_1\times \pi_3)^\ast(f\boxtimes g) \\
&=& \LL^{(d,d)/2}\LL^{(d',d')/2}  \oplus_!(p_d\times p_{d'})_!(f\boxtimes g)\\
&=& I^{ss}_{\mu}(f)\cdot I^{ss}_{\mu}(g).
\end{eqnarray*}

\end{proof}

\subsection{A useful identity}

Fix a framing vector $f\in \NN^{Q_0}$ and use the notation of Section 2. Consider the motivic functions $H:=[\Mst^{ss}_{f,\mu}\xrightarrow{\tilde{\pi}} \Mst^{ss}_\mu]$ and $\unit_{\Xst}:=[\Xst\xrightarrow{\,\id\,} \Xst]$  for any Artin stack $\Xst$. Then, 
\begin{equation} \label{Hilbert_scheme_identity} 
\Big(H\ast \unit_{\Mst_\mu^{ss}}\Big)|_{\Mst^{ss}_d}=\frac{\LL^{fd}}{\LL-1}\unit_{\Mst^{ss}_d}.
\end{equation} 
Indeed, consider the following commutative diagram
\[ \xymatrix @C=1.5cm{ {\Xst}:=\mathfrak{E}xact(Q_f)|_{\Mst^{ss}_{f,\mu}\times \Mst^{ss}_\mu} \ar[d]^{\pi_1^f\times \pi_3^f} \ar[r]^(0.6){\hat{\pi}} & 
\mathfrak{E}xact(Q)|_{\Mst^{ss}_\mu\times \Mst^{ss}_\mu} \ar[r]^(0.6){\pi_2} \ar[d]^{\pi_1\times\pi_3} & \Mst^{ss}_{\mu} \\ \Mst^{ss}_{f,\mu}\times\Mst^{ss}_{\mu} \ar[r]^{\tilde{\pi}\times\id_{\Mst^{ss}_\mu}} & \Mst^{ss}_\mu\times\Mst^{ss}_\mu, } \]
where the terms on the left hand side correspond to $Q_f$-representations with $\Mst^{ss}_\mu$ interpreted as the space of all $\zeta'$-semistable $Q_f$-representations with dimension vector in $\Lambda_\mu\times\{0\}$. The reader should convince himself that the square is cartesian and that $\Xst$ is the moduli stack of all $Q_f$-representations of dimension vector in $\Lambda_\mu\times\{1\}$ such that the restriction to the subquiver $Q$ is  $\zeta$-semistable. Indeed, any such representation $V$ has a unique semistable subrepresentation $V_c$ of the same slope  ``generated'' by $V_\infty\cong \kk$, i.e.\ a subrepresentation in $\Mst^{ss}_{f,\mu}$, and the quotient $V/V_c$ will be in $\Mst^{ss}_\mu$. By construction, $V_c|_Q$ is the intersection of all (semistable) subrepresentations $V'\subseteq V|_Q$ of slope $\mu$ containing all framing vectors. The map $\hat{\pi}$ restricts the short exact 
sequence $0\to V_c\to V\to V/V_c\to 0$ to $Q$. We finally get
\begin{eqnarray*} 
H\ast \unit_{\Mst_\mu^{ss}} &=& \pi_{2\, !} (\pi_1\times \pi_3)^\ast \big(\tilde{\pi}_{!}(\unit_{\Mst^{ss}_{f,\mu}}) \boxtimes \unit_{\Mst^{ss}_\mu} \big) \\
&=& \pi_{2\, !} (\pi_1\times \pi_3)^\ast (\tilde{\pi}\times \id_{\Mst^{ss}_\mu})_!\big(\unit_{\Mst^{ss}_{f,\mu}} \boxtimes \unit_{\Mst^{ss}_\mu}\big)  \\
&=& \pi_{2\,!}\hat{\pi}_! (\pi^f_1\times\pi^f_3)^\ast(\unit_{\Mst^{ss}_{f,\mu}\times\Mst^{ss}_\mu}) \\
&=& (\pi_{2}\hat{\pi})_! ( \unit_\Xst). 
\end{eqnarray*}
Looking at connected components, the map $\pi_2\hat{\pi}$ is a stratification of
\[ (X^{ss}_d\times \AA^{fd})/(G_d\times \GG_m) \xrightarrow{\tilde{\pi}_d} X^{ss}_d/G_d \]
with $\AA^{fd}_\kk$ parameterizing the matrix coefficients of the maps from $V_\infty\cong\kk$ to $V_i\cong\kk^{d_i}$ for $i\in Q_0$, i.e.\ the coordinates of the framing vectors, and $\GG_m$ corresponds to basis change in $V_\infty$. 
Applying equation  (\ref{push_pull}) to the principal $G_d$ respectively $G_d\times \GG_m$-bundles 
\begin{eqnarray*} 
X_d^{ss}&\xrightarrow{\;\rho_d\;}& X^{ss}_d/G_d, \\
\tilde{\pi}_d: X^{ss}_d\times \AA^{fd} &\xrightarrow{\;\tilde{\rho}_d\;}& X^{ss}_d\times \AA^{fd}/G_d\times \GG_m, \end{eqnarray*}
yields
\begin{eqnarray*}
\lefteqn{ \tilde{\pi}_{d\,!}\Big(\unit_{ X^{ss}_d\times \AA^{fd}/G_d\times \GG_m}\Big) }\\
&=& (\tilde{\pi}_d\circ \tilde{\rho_d})_!\Big(\unit_{X^{ss}_d\times\AA^{fd}}\Big)/[G_d\times\GG_m],\\
&=& (\rho_d\circ \pr_{X^{ss}_d})_!\Big(\unit_{X_d^{ss}\times \AA^{fd}}\Big)/[G_d\times\GG_m], \\
&=& \frac{\LL^{fd}}{\LL-1}\rho_{d\,!} \Big(\unit_{X_d^{ss}}\Big)/[G_d], \\
&=& \frac{\LL^{fd}}{\LL-1} \unit_{\Mst^{ss}_d}, 
\end{eqnarray*}
and the equation for the restriction of $H\ast \unit_{\Mst^{ss}_\mu}$ to $\Mst^{ss}_d$ follows.

\subsection{Donaldson--Thomas invariants}

The following definition of Donaldson--Thomas invariants is a simplified version of a  more general and much more complicated one which can be applied to triangulated 3-Calabi--Yau $A_\infty$-categories. We can embed $\kk Q\rep$ into the 3-Calabi--Yau $A_\infty$-category $D^b(\Gamma_\kk Q\rep)$ introduced in section 2.1, and the general version reduces to the one given here. \\

For $\mu\in (-\infty,+\infty]$ we define  the motivic version of the intersection complex $\ICS_{\Mst^{ss}_\mu}$  by the following motivic function in $\Mst^{st}_\mu$
\[ \ICS_{\Mst^{ss}_\mu}^{mot}:= \sum_{d\in \Lambda_\mu} \LL^{(d,d)/2}[\Mst^{ss}_d \hookrightarrow \Mst^{ss}_\mu]. \]
Here, $\LL^{(d,d)/2}$ is the analog of the normalization factor for mixed Hodge modules since $\dim \Mst^{ss}_d=-(d,d)$.
Taking the proper push forward along the morphisms $p:\Mst^{ss}_\mu \rightarrow \Msp^{ss}_\mu$ and $\dim:\Msp^{ss} \rightarrow \NN^{Q_0}\times\Spec\kk$ respectively, we can define the motivic Donaldson--Thomas function $\DTS^{mot}\in \underline{\Ka}_0(\Var/\Msp^{ss})[\LL^{-1/2}, (\LL^r-1)^{-1}: r\ge 1]$ and the generating series $\DT^{mot}:=\dim_!\DTS^{mot}\in \Ka_0(\Var/\kk)[\LL^{-1/2}, (\LL^r-1)^{-1}: r\ge 1][[t_i:i\in Q_0]]$ of the motivic Donaldson--Thomas invariants  by $\DTS^{mot}|_{\Msp^{ss}_\mu}=\DTS^{mot}_\mu$ for all $\mu\in (-\infty,+\infty]$ with $\DTS^{mot}_\mu$ being the unique solution of the equation
\[ p_! \ICS_{\Mst^{ss}_\mu}^{mot} = \Sym\Bigl( \frac{1}{\LL^{1/2}-\LL^{-1/2}}\, \DTS^{mot}_\mu \Bigr)\]
such that $\DTS^{mot}_\mu|_{\Msp^{ss}_0}=0$. As $\dim_!$ is a $\lambda$-ring homomorphism from the $\lambda$-ring $\underline{\Ka}_0(\Var/\Msp^{ss})[\LL^{-1/2}, (\LL^r-1)^{-1}: r\ge 1]$ to $ \Ka_0(\Var/\kk)[\LL^{-1/2}, (\LL^r-1)^{-1}: r\ge 1][[t_i:i\in Q_0]]$, this implies
\begin{eqnarray*}
\dim_! p_! \ICS^{mot}_{\Mst^{ss}_\mu} &=& \Sym\Bigl( \frac{1}{\LL^{1/2}-\LL^{-1/2}} \,\dim_!\DTS^{mot}_\mu \Bigr)  \\ &=& \Sym\Bigl( \frac{1}{\LL^{1/2}-\LL^{-1/2}}\, \DT^{mot}|_{\Lambda_\mu} \Bigr).
\end{eqnarray*}
We also use the notation $\DTS^{mot}_d=\DTS^{mot}|_{\Msp^{ss}_d}$ and $\DT^{mot}_d$ for the coefficient of $\DT^{mot}$ in front of $t^d$.
Let us give an alternative definition of the Donaldson--Thomas function $\DTS^{mot}_\mu$ using framed moduli spaces. Fix a $\mu$-generic stability condition $\zeta$. By applying the ``integration map'' $I^{ss}_\mu=\prod_{d\in \Lambda_\mu} I^{ss}_d$ to the identity (\ref{Hilbert_scheme_identity}) and  by using $\Sym(\LL^i a)=\sum_{n\ge 0} \LL^{ni}\Sym^n(a)$, we obtain
\begin{eqnarray*}
\lefteqn{\frac{1}{\LL-1}\Sym\Big(\sum_{0\not= d\in \Lambda_\mu} \frac{\LL^{fd}}{\LL^{1/2}-\LL^{-1/2}} \DTS^{mot}_d\Big) }\\
&=&\sum_{d\in \Lambda_\mu} \frac{\LL^{fd}}{\LL-1}p_{d\,!}(\ICS_{\Mst^{ss}_d}) \\
&=&  I^{ss}_\mu\Big( \sum_{d\in \Lambda_\mu} \frac{\LL^{fd}}{\LL-1} \unit_{\Mst^{ss}_d}\Big) \\
&=& I^{ss}_\mu(H)I^{ss}_\mu(\unit_{\Mst^{ss}_\mu})\\
&=& \Big(p_{!}\sum_{d\in \Lambda_\mu} \LL^{(d,d)/2}\tilde{\pi}_{d\, !} (\unit_{\Mst^{ss}_{f,d}})\Big)\Sym\Big(\frac{\DTS^{mot}_\mu }{\LL^{1/2}-\LL^{-1/2}}\Big) \\
&=& \Big(\pi_{!}\sum_{d\in \Lambda_\mu} \LL^{(d,d)/2}p_{f,d\,!} (\unit_{\Mst^{ss}_{f,d}})\Big)\Sym\Big(\frac{\DTS^{mot}_\mu }{\LL^{1/2}-\LL^{-1/2}}\Big) \\
&=& \frac{1}{\LL-1}\Big(\pi_{!}\sum_{d\in \Lambda_\mu} \LL^{fd/2}\ICS_{\Msp^{ss}_{f,d}}\Big)\Sym\Big(\frac{\DTS^{mot}_\mu }{\LL^{1/2}-\LL^{-1/2}}\Big), 
\end{eqnarray*}
where we applied equation (\ref{push_pull}) to the principal $(G_d\times\GG_m)$-bundle $X^{ss}_{f,d}\to \Mst^{ss}_{f,d}$ and to the principal $P(G_d\times \GG_m)=G_d$-bundle $X^{ss}_{f,d}\to \Msp^{ss}_{f,d}$ once more to compute $p_{f,d\,!}(\unit_{\Mst^{ss}_{f,d}})=\unit_{\Msp^{ss}_{f,d}}/(\LL-1)$.
Using the properties of $\Sym$ and $\frac{\LL^{fd}-1}{\LL^{1/2}-\LL^{-1/2}}=\LL^{1/2}[\PP^{fd-1}]$, we get the so-called DT/PT correspondence.
\begin{proposition}[DT/PT correspondence] \label{PT-DT_stack} For every quiver $Q$ and every $\mu$-generic stability condition $\zeta$ we get
\[ \pi_{!}\sum_{d\in \Lambda_\mu} \LL^{fd/2}\cdot\ICS_{\Msp^{ss}_{f,d}}= \Sym\Big(\sum_{0\not= d\in \Lambda_\mu} \LL^{1/2}[\PP^{fd-1}] \DTS^{mot}_d\Big)
\]
for all framing vectors $f\in \NN^{Q_0}$. 
\end{proposition}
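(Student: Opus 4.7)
The plan is to apply the integration map $I^{ss}_\mu$ of Lemma \ref{integration_map} to the Hilbert-scheme identity (\ref{Hilbert_scheme_identity}) and translate the result into generating series built from $\DTS^{mot}$. The $\mu$-generic hypothesis enters only via Lemma \ref{integration_map}, which makes $I^{ss}_\mu$ multiplicative for the Ringel--Hall convolution; everything afterwards is a $\lambda$-ring manipulation inside the completed ring $\underline{\Ka}_0(\Var/\Msp^{ss}_\mu)[\LL^{-1/2},(\LL^r-1)^{-1}:r\ge 1]$.

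I would start by applying $I^{ss}_\mu$ to both sides of (\ref{Hilbert_scheme_identity}). The left hand side becomes $I^{ss}_\mu(H)\cdot I^{ss}_\mu(\unit_{\Mst^{ss}_\mu})$, and the second factor equals $p_!\ICS^{mot}_{\Mst^{ss}_\mu}=\Sym\bigl(\DTS^{mot}_\mu/(\LL^{1/2}-\LL^{-1/2})\bigr)$ by the very definition of $\DTS^{mot}_\mu$. The first factor is handled by combining the factorization $p\circ \tilde\pi=\pi\circ p_f$ with the principal-bundle identity (\ref{push_pull}) applied to the $(G_d\times\GG_m)$-bundle $X^{ss}_{f,d}\to\Mst^{ss}_{f,d}$ and to the induced $P(G_d\times\GG_m)\cong G_d$-bundle $X^{ss}_{f,d}\to\Msp^{ss}_{f,d}$; this yields $p_{f,d\,!}\unit_{\Mst^{ss}_{f,d}}=\unit_{\Msp^{ss}_{f,d}}/(\LL-1)$, and the smoothness identity $\dim\Msp^{ss}_{f,d}=fd-(d,d)$ rewrites $\LL^{(d,d)/2}\unit_{\Msp^{ss}_{f,d}}$ as $\LL^{fd/2}\ICS_{\Msp^{ss}_{f,d}}$, giving
\[ I^{ss}_\mu(H)=\tfrac{1}{\LL-1}\,\pi_!\sum_{d\in\Lambda_\mu}\LL^{fd/2}\ICS_{\Msp^{ss}_{f,d}}. \]
For the right hand side, the line-element property $\sigma^n(\LL)=\LL^n$ implies graded piece by graded piece that the $e$-component of $\Sym(\sum_{d\neq 0}\LL^{fd}a_d)$ equals $\LL^{fe}$ times the $e$-component of $\Sym(\sum_{d\neq 0}a_d)$; applied to $a_d=\DTS^{mot}_d/(\LL^{1/2}-\LL^{-1/2})$ this rewrites the right hand side as $(\LL-1)^{-1}\Sym\bigl(\sum_{0\neq d}\LL^{fd}\DTS^{mot}_d/(\LL^{1/2}-\LL^{-1/2})\bigr)$.

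Matching both sides and dividing by $\Sym\bigl(\DTS^{mot}_\mu/(\LL^{1/2}-\LL^{-1/2})\bigr)$, which is invertible in the completion because $\DTS^{mot}_\mu|_{\Msp^{ss}_0}=0$, the basic identity $\Sym(a)/\Sym(b)=\Sym(a-b)$ reduces the claim to
\[ \pi_!\sum_{d\in\Lambda_\mu}\LL^{fd/2}\ICS_{\Msp^{ss}_{f,d}}=\Sym\Bigl(\sum_{0\neq d\in\Lambda_\mu}\tfrac{\LL^{fd}-1}{\LL^{1/2}-\LL^{-1/2}}\DTS^{mot}_d\Bigr), \]
and the elementary computation $(\LL^{fd}-1)/(\LL^{1/2}-\LL^{-1/2})=\LL^{1/2}[\PP^{fd-1}]$ finishes the proof. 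The only genuine obstacle is the availability of the multiplicativity of $I^{ss}_\mu$, which is precisely where $\mu$-genericity of $\zeta$ is used; the remainder is bookkeeping, with care required only to track the $\LL^{1/2}$ powers coming from the intersection-complex normalization $\ICS=\LL^{-\dim/2}\unit$ and from the line-element behaviour of $\LL$.
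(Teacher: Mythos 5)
Your proposal is correct and follows essentially the same route as the paper: apply the integration map $I^{ss}_\mu$ to the identity (\ref{Hilbert_scheme_identity}), use its multiplicativity (the only place $\mu$-genericity enters), compute $I^{ss}_\mu(H)$ via the push--pull formula for the two principal bundles over $\Mst^{ss}_{f,d}$ and $\Msp^{ss}_{f,d}$, and conclude by the $\lambda$-ring identities $\Sym(a+b)=\Sym(a)\Sym(b)$ and $\Sym(\LL^i a)=\sum_n\LL^{ni}\Sym^n(a)$ together with $(\LL^{fd}-1)/(\LL^{1/2}-\LL^{-1/2})=\LL^{1/2}[\PP^{fd-1}]$. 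All steps, including the dimension count $\dim\Msp^{ss}_{f,d}=fd-(d,d)$ and the invertibility of $\Sym(\DTS^{mot}_\mu/(\LL^{1/2}-\LL^{-1/2}))$ in the completion, match the paper's argument.
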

If $f\in (2\NN)^{Q_0}$, we have $fd/2\in \NN$, and the map 
\[  (a_d)_{d\in \NN^{Q_0}} \longmapsto (\LL^{-fd/2}a_d)_{d\in \NN^{Q_0}} \]
is an isomorphism of the $\lambda$-ring  $\underline{\Ka}_0(\Var/\Msp^{ss}_\mu)[\LL^{-1/2},(\LL^r-1)^{-1} : r\ge 1]$ as $\Sym^n(\LL^{-fd/2}a_d)=\LL^{-nfd/2}\Sym^n(a_d)$ in this case. Applying this isomorphism to the DT/PT correspondence yields the alternative form.
\begin{corollary}[DT/PT correspondence, alternative form] For every quiver $Q$ and every $\mu$-generic stability condition $\zeta$ we get
\[ \pi_{!}(\ICS_{\Msp^{ss}_{f,\mu}})=\Sym\Big(\sum_{0\not= d\in \Lambda_\mu} [\PP^{fd-1}]_{vir} \DTS^{mot}_d\Big) \]
for all framing vectors $f\in (2\NN)^{Q_0}$ with $[\PP^{fd-1}]_{vir}=\int_{\PP^{fd-1}}\ICS_{\PP^{fd-1}}=\frac{\LL^{fd/2}-\LL^{-fd/2}}{\LL^{1/2}-\LL^{-1/2}}$. 
\end{corollary}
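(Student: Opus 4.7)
The plan is to derive the alternative form directly from the DT/PT correspondence (Proposition \ref{PT-DT_stack}) by applying the rescaling map
\[
\phi\colon (a_d)_{d\in\Lambda_\mu}\;\longmapsto\;(\LL^{-fd/2}a_d)_{d\in\Lambda_\mu}
\]
of $\underline{\Ka}_0(\Var/\Msp^{ss}_\mu)[\LL^{-1/2},(\LL^r-1)^{-1}:r\ge 1]$, as sketched in the paragraph preceding the statement. The parity hypothesis $f\in(2\NN)^{Q_0}$ enters solely to guarantee that $fd/2\in\NN$ for every $d\in\Lambda_\mu$, so that $\LL^{-fd/2}$ lies in the line-element subgroup of the $\lambda$-ring without any spurious signs.

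The main thing to check, and the only potentially delicate point of the proof, is that $\phi$ is a $\lambda$-ring automorphism. Multiplicativity with respect to the convolution product is immediate from the additivity of the exponent $fd/2$ in $d$. For compatibility with the Schur operations $\sigma^n$, I would argue as follows. The paper extends the $\lambda$-structure so that $-\LL^{1/2}$ is a line element; because $fd$ is even, $\LL^{-fd/2}=(-\LL^{1/2})^{-fd}$ is a power of a line element, hence itself a line element. In a (special) $\lambda$-ring, the identity $\sigma^n(\ell\cdot a)=\ell^n\sigma^n(a)$ holds for every line element $\ell$. Since $\sigma^n$ sends a class supported on $\Msp^{ss}_d$ to one supported on $\Msp^{ss}_{nd}$, this gives $\phi(\sigma^n(a_d))=\LL^{-nfd/2}\sigma^n(a_d)=\sigma^n(\LL^{-fd/2}a_d)=\sigma^n(\phi(a_d))$, and the extension to arbitrary, not necessarily homogeneous, elements follows from the usual universal polynomial formulas for $\sigma^n$ on sums of homogeneous pieces.

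Once $\phi$ is known to be a $\lambda$-ring automorphism, applying it to both sides of Proposition \ref{PT-DT_stack} finishes the proof. On the left-hand side, $\phi$ cancels the factor $\LL^{fd/2}$ in each summand and produces $\pi_!\ICS_{\Msp^{ss}_{f,\mu}}$. On the right-hand side, $\phi$ passes through $\Sym$ and transforms $\LL^{1/2}[\PP^{fd-1}]\,\DTS^{mot}_d$ into $\LL^{-fd/2}\LL^{1/2}[\PP^{fd-1}]\,\DTS^{mot}_d$; the elementary identity
\[
\LL^{-fd/2}\LL^{1/2}\cdot[\PP^{fd-1}]=\LL^{-fd/2}\LL^{1/2}\cdot\frac{\LL^{fd/2}(\LL^{fd/2}-\LL^{-fd/2})}{\LL^{1/2}(\LL^{1/2}-\LL^{-1/2})}=[\PP^{fd-1}]_{vir}
\]
then yields the claimed formula. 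Beyond the $\lambda$-automorphism verification, everything else is direct bookkeeping.
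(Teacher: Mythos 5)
Your proposal is correct and is essentially identical to the paper's own argument: the paper likewise applies the rescaling $(a_d)\mapsto(\LL^{-fd/2}a_d)$ to Proposition \ref{PT-DT_stack}, justifying compatibility with $\Sym^n$ by the same line-element/parity observation $\Sym^n(\LL^{-fd/2}a_d)=\LL^{-nfd/2}\Sym^n(a_d)$. Your bookkeeping of the factors $\LL^{1/2}[\PP^{fd-1}]\mapsto[\PP^{fd-1}]_{vir}$ also matches the paper.
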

Notice that $\PP^{fd-1}$ is the fiber of $\pi_{d}$ over any geometric point of $\Msp^{st}_d$.  
\begin{corollary} \label{intconj3}
If $\zeta$ is generic, the motivic Donaldson--Thomas function $\DTS^{mot}$ is in the image of the map 
 \[ \underline{\Ka}_0(\Var/\Msp^{ss})[\LL^{-1/2},[\PP^N]^{-1}:r\ge 1] \longrightarrow \underline{\Ka}_0(\Var/\KK)[\LL^{-1/2}, (\LL^r-1)^{-1}: r\ge 1]\]
 and similarly for $\DT^{mot}$.
\end{corollary}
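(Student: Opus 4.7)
The plan is to invert the DT/PT correspondence so as to read off $\DTS^{mot}$ directly from the integral motivic class $\pi_!(\ICS_{\Msp^{ss}_{f,\mu}})$. First I would fix a framing vector $f \in (2\NN)^{Q_0}$ (say $f_i = 2$ for all $i$). The alternative DT/PT correspondence, proved just above the corollary, gives
\[ F := \pi_!(\ICS_{\Msp^{ss}_{f,\mu}}) = \Sym(G), \qquad G := \sum_{0 \neq d \in \Lambda_\mu} [\PP^{fd-1}]_{vir}\,\DTS^{mot}_d. \]
The crucial observation is that $F$ manifestly lies in the smaller ring $\underline{\Ka}_0(\Var/\Msp^{ss}_\mu)[\LL^{-1/2}]$, without any denominators of the form $(\LL^r-1)^{-1}$: indeed, $\Msp^{ss}_{f,d}$ is smooth (Section 2.2), so $\ICS_{\Msp^{ss}_{f,d}}$ is just $\LL^{-\dim \Msp^{ss}_{f,d}/2}\,\unit_{\Msp^{ss}_{f,d}}$, and $\pi$ is projective.

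Next I would unfold $F = \Sym(G)$ slope by slope and dimension vector by dimension vector. Restricting to $\Msp^{ss}_d$ gives
\[ F_d \;=\; G_d \;+\; \sum_{\substack{d = \sum_k m_k d_k \\ \text{nontrivial}}} \prod_k \sigma^{m_k}(G_{d_k}), \]
where the sum ranges over decompositions of $d$ into pairwise distinct $0 \neq d_k \in \Lambda_\mu$ with multiplicities $m_k \geq 1$, excluding only the trivial partition $d = 1\cdot d$. In every remaining term one has $|d_k| < |d|$. Now the $\lambda$-structure on $\underline{\Ka}_0(\Var/\Msp^{ss}_\mu)[\LL^{-1/2}]$ is defined so that $\sigma^n$ on a generator $[X \to \Msp^{ss}_\mu]$ is the honest geometric symmetric power $[\Sym^n_\kk(X) \to \Msp^{ss}_\mu]$, with $-\LL^{1/2}$ declared a line element; in particular, $\sigma^n$ and the convolution product both preserve this subring. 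By induction on $|d|$, one concludes $G_d \in \underline{\Ka}_0(\Var/\Msp^{ss}_d)[\LL^{-1/2}]$.

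Finally, since $G_d = [\PP^{fd-1}]_{vir}\,\DTS^{mot}_d$ and $[\PP^{fd-1}]_{vir}$ becomes a unit after adjoining the inverses of all $[\PP^N]$, one solves
\[ \DTS^{mot}_d \;=\; G_d \big/ [\PP^{fd-1}]_{vir} \;\in\; \underline{\Ka}_0(\Var/\Msp^{ss}_d)\bigl[\LL^{-1/2},\,[\PP^N]^{-1}: N \geq 1\bigr], \]
and the assertion for $\DT^{mot}$ follows by applying the $\lambda$-ring homomorphism $\dim_!$. The main obstacle I anticipate is the integrality step in the inductive argument: the plethystic logarithm in a general $\lambda$-ring would introduce $1/n$ denominators, but these are absent here because $\sigma^n$ is realized as an honest GIT quotient by $S_n$, which in characteristic zero is an integral geometric operation preserving $\underline{\Ka}_0(\Var)[\LL^{-1/2}]$. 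This geometric incarnation of $\sigma^n$---unavailable in the purely numerical setting---is precisely the feature that makes the relative integrality accessible for quivers.
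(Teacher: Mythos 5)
Your argument is correct and is precisely the (implicit) proof the paper intends: the corollary is stated as an immediate consequence of the alternative form of the DT/PT correspondence, obtained by noting that $\pi_!(\ICS_{\Msp^{ss}_{f,\mu}})$ involves only classes of schemes and powers of $\LL^{\pm 1/2}$, inverting $\Sym$ degree by degree in $|d|$ (which stays in the denominator-free subring since $\sigma^n$ is a geometric symmetric power), and then dividing by $[\PP^{fd-1}]_{vir}$, which requires only the inverses $[\PP^N]^{-1}$. Your side remark about the plethystic logarithm is not even needed, since your induction never takes a logarithm, but it does not affect the correctness of the argument.
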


By applying the $\lambda$-ring homomorphism from $\underline{\Ka}_0(\Var/\Msp^{ss}_\mu)[\LL^{-1/2},(\LL^r-1)^{-1}: r\ge 1]$ to $\underline{\Ka}_0(\MHM(\Msp^{ss}_\mu))[\LL^{-/2},(\LL^r-1)^{-1}:r\ge 1]$, mentioned at the beginning of this section, to the previous result, we obtain the corresponding formula in $\underline{\Ka}_0(\MHM(\Msp^{ss}_\mu))[\LL^{-1/2},(\LL^r-1)^{-1}:r\ge 1]$.
\begin{corollary} \label{alternative_form} For every quiver $Q$ and every $\mu$-generic stability condition $\zeta$ we get
\[ \pi_{\ast}(\ICS_{\Msp^{ss}_{f,\mu}})=\pi_{!}(\ICS_{\Msp^{ss}_{f,\mu}})=\Sym\Big(\sum_{0\not= d\in \Lambda_\mu} [\PP^{fd-1}]_{vir} \DTS_d\Big) \]
for all framing vectors $f\in (2\NN)^{Q_0}$. 
\end{corollary}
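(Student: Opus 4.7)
The plan is to deduce this corollary from its motivic analogue (the ``alternative form'' of the DT/PT correspondence just proved in the previous corollary) by transporting the identity along the realization homomorphism into mixed Hodge modules, and then to observe that proper pushforward makes $\pi_\ast$ and $\pi_!$ coincide.

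First, I would recall the $\lambda$-ring homomorphism
\[
 \underline{\Ka}_0(\Var/\Msp^{ss}_\mu)[\LL^{-1/2},(\LL^r-1)^{-1}: r\ge 1] \longrightarrow \underline{\Ka}_0(\MHM(\Msp^{ss}_\mu))[\LL^{-1/2},(\LL^r-1)^{-1}: r\ge 1]
\]
announced at the beginning of Section~6, defined by $[X\xrightarrow{q}\Msp^{ss}_\mu]\mapsto q_!\QQ$. Because $\Msp^{ss}_{f,d}$ is smooth of dimension $f\cdot d-(d,d)$ and $\pi$ is representable, the image of the motivic class $\LL^{fd/2}[\Msp^{ss}_{f,d}\to \Msp^{ss}_d]$ under this homomorphism (once we track the normalization carefully, using $\dim \Msp^{ss}_{f,d}=f\cdot d-(d,d)$ together with the chosen shift in $\ICS$) is $\pi_!\ICS_{\Msp^{ss}_{f,d}}(\QQ)$; aggregating over $d\in\Lambda_\mu$ gives $\pi_!\ICS_{\Msp^{ss}_{f,\mu}}(\QQ)$. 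Similarly, by the very definition of $\DTS^{mot}$ and $\DTS$ as the unique solutions to matching $\Sym$-factorizations of $p_!\ICS_{\Mst^{ss}_\mu}$ in their respective settings, the realization sends $\DTS^{mot}_d$ to $\DTS_d$. Since the homomorphism is a $\lambda$-ring map, it commutes with $\Sym$ and with multiplication by $[\PP^{fd-1}]_{vir}\in \ZZ[\LL^{\pm 1/2}]$, so applying it to the motivic formula
\[
 \pi_!\ICS^{mot}_{\Msp^{ss}_{f,\mu}} = \Sym\Bigl(\sum_{0\ne d\in\Lambda_\mu}[\PP^{fd-1}]_{vir}\DTS^{mot}_d\Bigr)
\]
yields the desired equality $\pi_!\ICS_{\Msp^{ss}_{f,\mu}}(\QQ) = \Sym\bigl(\sum_{0\ne d}[\PP^{fd-1}]_{vir}\DTS_d\bigr)$.

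It remains to justify $\pi_\ast=\pi_!$ on the class $\ICS_{\Msp^{ss}_{f,\mu}}(\QQ)$. This is immediate from Theorem~\ref{virtsmall}: for each $d$ the Hilbert--Chow map $\pi:\Msp^{ss}_{f,d}\to\Msp^{ss}_d$ is projective, hence proper, so $\pi_!=\pi_\ast$ as functors on $D^b(\MHM(\Msp^{ss}_{f,d}))$, and passing to Grothendieck groups gives the first equality in the statement.

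The only technical care needed is to match the normalization conventions: one has to check that the ``structural'' factor $\LL^{-\dim \Msp^{ss}_{f,d}/2}$ built into $\ICS_{\Msp^{ss}_{f,d}}(\QQ)$ (together with the $\LL^{(d,d)/2}$ factor appearing in the definition of $\ICS^{mot}_{\Mst^{ss}_\mu}$ and the $(G_d\times\GG_m)$-bundle relation used to pass from the stack $\Mst^{ss}_{f,d}$ to the scheme $\Msp^{ss}_{f,d}$) combines to exactly the prefactor $\LL^{fd/2}$ that multiplies $\ICS_{\Msp^{ss}_{f,d}}$ in the statement of Proposition~\ref{PT-DT_stack}. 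This bookkeeping is the only mild obstacle; once it is done, the identity transports formally. Since the parity assumption $f\in (2\NN)^{Q_0}$ guarantees $fd/2\in\NN$ and keeps us inside the $\lambda$-subring where the rescaling automorphism $(a_d)\mapsto (\LL^{-fd/2}a_d)$ is defined, both sides of the claimed equality live in the same ring and the proof is complete.
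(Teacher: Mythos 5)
Your proposal is correct and follows exactly the paper's route: the paper obtains this corollary by applying the $\lambda$-ring realization homomorphism $[X\xrightarrow{q}\Msp^{ss}_\mu]\mapsto q_!\QQ$ to the motivic alternative form of the DT/PT correspondence, with $\pi_\ast=\pi_!$ coming from the projectivity (hence properness) of the Hilbert--Chow map. Your write-up merely makes explicit the bookkeeping the paper leaves implicit (compatibility of the normalizations of $\ICS^{mot}$ and $\ICS$ on the smooth space $\Msp^{ss}_{f,d}$, and that the realization sends $\DTS^{mot}_d$ to $\DTS_d$ by uniqueness of the $\Sym$-factorization).
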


\subsection{The integrality conjecture}

The so-called Integrality Conjecture plays a fundamental role in Donaldson--Thomas theory. A proof for quiver with potential has been sketched in \cite{KS2} in the Hodge theoretic context. A rigorous proof for quiver without potential and non-refined Donaldson--Thomas invariants can be found in \cite{Reineke3}. A relative version, saying that whenever the conjecture holds for one stability condition, it also holds for any other, has been given in \cite{JoyceDT} (see also \cite{Reineke3}). Our proof is different from the very complicated one given by Kontsevich and Soibelman. In fact, we reduce the general situation of quiver representations to a special situation for which the integrality conjecture has been proven by Efimov \cite{Efimov}. \\

As we have seen in Corollary \ref{intconj3}, the motivic Donaldson--Thomas invariants can be specialized to Euler characteristics producing rational numbers. The classical  integrality conjecture claims that these rational numbers are actually integers. We will prove a relative version of this in the motivic context. Let us assume  $\Char(\kk)=0$ for our ground field $\kk$. Unless otherwise stated, all schemes and stacks are defined over $\kk$.

\begin{theorem}[Integrality Conjecture, relative version]  \label{intconjsv}
Let $\zeta$ be a  $\mu$-generic stability condition and $x\in \Msp^{ss}_\mu$ a not necessarily closed point with residue field $\kk(x)$.  Then, there is a finite separable extension $\KK\supset \kk(x)$ depending on $x$ with induced morphism $i:\Spec \KK\to \Msp^{ss}_\mu$  such that $i^\ast\DTS^{mot}$ is in the image of the natural map 
\[ \Ka_0(\Var/\KK)[\LL^{-1/2}] \longrightarrow \Ka_0(\Var/\KK)[\LL^{-1/2}, (\LL^r-1)^{-1}: r\ge 1].\] 
\end{theorem}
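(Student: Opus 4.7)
The plan is to reduce the problem étale-locally around $x$ to the motivic DT invariants of the Ext-quiver $Q_\xi$ of the polystable representation corresponding to $x$, taken with trivial stability, and then invoke Efimov's integrality theorem \cite{Efimov} which applies because $\mu$-genericity forces $Q_\xi$ to be symmetric.

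First I would replace $\kk(x)$ by a finite separable extension $\KK$ so that the closed point above $x$ corresponds to a polystable $\KK$-representation $V=\bigoplus_{k=1}^s E_k^{m_k}$ with pairwise non-isomorphic $\zeta$-stable factors $E_k$ of dimension vector $d^k$ and slope $\mu$, all defined over $\KK$. This is possible because the residue field extension given by the splitting of the isotypic decomposition of $V$ is a finite separable extension. Let $\xi=((d^1,m_1),\ldots,(d^s,m_s))$ be the resulting decomposition type, $Q_\xi$ the Ext-quiver (with $\delta_{kl}-(d^k,d^l)$ arrows from $k$ to $l$), and $d_\xi$ the dimension vector $(m_k)_k$. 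By $\mu$-genericity, $(d^k,d^l)=(d^l,d^k)$, so $Q_\xi$ is symmetric.

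The key geometric ingredient is the Luna slice theorem: up to étale base change, a neighborhood of $i:\Spec\KK\to\Msp^{ss}_d$ becomes a neighborhood of the origin $i_0:\Spec\KK\to\Msp^{ssimp}_{d_\xi}(Q_\xi)$, compatibly with the Hilbert--Chow morphisms for the framed moduli spaces (this is already used in Section~5 to identify the fiber $\pi^{-1}(x)$ with $M_{f_\xi,d_\xi}^{nilp}(Q_\xi)$). Applying this étale-local identification to the DT/PT correspondence (Proposition~\ref{PT-DT_stack}) for both sides gives the identity
\[ i^\ast\,\pi_{!}\sum_{d'\in\Lambda_\mu}\LL^{f\cdot d'/2}\ICS_{\Msp^{ss}_{f,d'}} \;=\; i_0^\ast\,\pi_{\xi,!}\sum_{e}\LL^{f_\xi\cdot e/2}\ICS_{\Msp^{ssimp}_{f_\xi,e}(Q_\xi)}, \]
where $f_\xi$ is the induced framing, after choosing $f$ compatibly. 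Both sides equal the $\Sym$ of the corresponding DT series, and $\Sym$ is an isomorphism on the pro-unipotent part of the completion; inverting $[\PP^{f\cdot d'-1}]_{vir}$ and matching components in dimension vector $d_\xi$ yields $i^\ast\DTS^{mot}_d = i_0^\ast\DTS^{mot,Q_\xi}_{d_\xi}$, where the right-hand side uses the \emph{trivial} stability condition on $Q_\xi$ (this is forced because, locally near the origin, only representations with all stable factors of the same slope appear, and for $Q_\xi$ every representation is semistable with respect to $\theta=0$).

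Finally, Efimov's theorem (Theorem 1.1 of \cite{Efimov}) asserts that for a symmetric quiver with trivial stability the motivic DT invariants lie in $\Ka_0(\Var/\KK)[\LL^{\pm 1/2}]$, i.e.\ no denominators $(\LL^r-1)^{-1}$ are needed; more precisely Efimov proves that the generating series of DT invariants is a sum of classes of smooth projective varieties times half-integer powers of $\LL$. Pulling back along $i_0$ preserves this property, giving the desired conclusion for $i^\ast\DTS^{mot}$. The main obstacle is Step~3, namely checking that the étale-local isomorphism from the Luna slice truly descends to an equality of DT \emph{functions} (and not merely of invariants after pushing to a point); this requires verifying the functoriality of $\Sym$ and the integration map under étale pullback, in the spirit of Lemma~\ref{lambda_pull_back}, and carefully tracking the dependence of the framed models on the decomposition type $\xi$.
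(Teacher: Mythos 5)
Your high-level strategy — reduce to the Ext-quiver $Q_\xi$ of the polystable representation at $x$, note that $\mu$-genericity makes $Q_\xi$ symmetric, and invoke Efimov — is exactly the paper's, but there are two genuine gaps in the execution. First, the step where you ``invert $\Sym$ and match components'' after pulling back the DT/PT correspondence to the single point $i:\Spec\KK\to\Msp^{ss}_d$ does not work as stated: for a point $y$ of $\Msp^{ss}_\mu$, one has $i_y^\ast\Sym(h)\neq\Sym(i_y^\ast h)$, because $\Sym$ is built from the finite map $\oplus:(\Msp^{ss}_\mu)^n\to\Msp^{ss}_\mu$ and the fiber of $\oplus$ over $x$ consists of all ways of splitting $V=\bigoplus_k E_k^{m_k}$ into summands, not just of copies of $x$ itself. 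The paper circumvents this by pulling back not to a point but along the monoid homomorphism $\iota_E:\NN^s\times\Spec\KK\to\Msp^{ss}_\mu$ classifying the whole family $\bigoplus_k E_k^{n_k}$, and proving (Lemma \ref{lambda_pull_back}) that \emph{this} pullback is a $\lambda$-ring homomorphism; only then can $\Sym$ be inverted and the component $(n_k)=(m_k)$ extracted. You flag this issue but do not resolve it, and resolving it essentially forces you onto the paper's route.

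Second, and more seriously, the final appeal to Efimov is a non sequitur as written. After the local identification you have (at best) expressed $i^\ast\DTS^{mot}$ as the \emph{value at the origin} of the DT function of $Q_\xi$ for the trivial stability, i.e.\ as $\DTS^{mot}(Q_\xi)(0_m)$. Efimov's theorem, however, controls the \emph{global} invariants $\DT^{mot}(Q_\xi)_n=\dim_!\DTS^{mot}(Q_\xi)_n\in\ZZ[\LL^{\pm 1/2}]$; an element of $\ZZ[\LL^{\pm1/2}]$ cannot be ``pulled back along $i_0$'', and a priori the DT function could carry denominators at individual points even when its integral does not. The bridge between the two — that the value at the origin equals $\DT^{mot}(Q_\xi)_m|_{\LL^{1/2}\mapsto\LL^{-1/2}}$ — is the actual technical heart of the paper's proof (Proposition \ref{localDT} and Corollary \ref{localDT2}): it is established by a Ringel--Hall computation showing $f\ast g=1$ for $f$ an alternating sum over the classifying maps of the polystable objects $\Spec\KK/G_n$ and $g=[\Mst_E\to\Mst^{ss}_{\mu,\KK}]$, using the socle filtration and the vanishing of $\sum_{n\le N}(-1)^{|n|}\LL^{\sum_k\binom{n_k}{2}}\prod_k\left[{N_k\atop n_k}\right]$ for $N\neq 0$. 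Your proposal contains no substitute for this step, so the proof is incomplete. (As a side remark, the paper does not actually invoke the Luna slice theorem here; the Hall-algebra identity replaces it, which also spares you from checking compatibility of the slice with the monoid structures.)
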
 
\begin{corollary}
 If $\zeta$ is $\mu$-generic and  $x\in \Msp^{ss}_\mu$ is a closed point with $\kk(x)=\overline{\kk(x)}$, then the ``value'' $\DTS^{mot}(x):=\DTS^{mot}|_{\Spec\kk(x)}$ of the Donaldson function $\DTS^{mot}$ at $x$ is in the image of 
 \[ \Ka_0(\Var/\kk(x))[\LL^{-1/2}] \longrightarrow \Ka_0(\Var/\kk(x))[\LL^{-1/2}, (\LL^r-1)^{-1}: r\ge 1].\] 
 The same applies to the value $\DTS^{mot}(y):=y^\ast\DTS^{mot}$ at any geometric point $y:\Spec\KK\to \Msp^{ss}_\mu$ of $\Msp^{ss}_\mu$.
\end{corollary}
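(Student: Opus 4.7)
The plan is to deduce both parts of the corollary directly from Theorem \ref{intconjsv} via elementary field-theoretic bookkeeping; the substantive work has already been carried out in that theorem.

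For the first assertion, I would apply Theorem \ref{intconjsv} at the closed point $x$ to produce a finite separable extension $\KK \supset \kk(x)$ together with an induced morphism $\tilde{x}: \Spec \KK \to \Msp^{ss}_\mu$ such that $\tilde{x}^*\DTS^{mot}$ lies in the image of $\Ka_0(\Var/\KK)[\LL^{-1/2}]$. Because $\Char(\kk)=0$ forces every finite extension of $\kk(x)$ to be separable, and because $\kk(x)$ is assumed algebraically closed, the extension $\KK \supset \kk(x)$ must be trivial. Hence $\KK=\kk(x)$, the morphism $\tilde{x}$ coincides with the inclusion of the closed point, and $\tilde{x}^*\DTS^{mot}=\DTS^{mot}(x)$ has the required form.

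For the second assertion, let $y: \Spec \KK \to \Msp^{ss}_\mu$ be a geometric point and let $x$ denote its image, so that $y$ factors through the residue-field inclusion $\Spec\kk(x)\hookrightarrow \Msp^{ss}_\mu$. I would then apply Theorem \ref{intconjsv} to this $x$, producing a finite separable extension $\KK' \supset \kk(x)$ and a morphism $i:\Spec\KK'\to \Msp^{ss}_\mu$ with $i^*\DTS^{mot}\in \Ka_0(\Var/\KK')[\LL^{-1/2}]$. Since $\KK$ is algebraically closed and already contains $\kk(x)$, I may fix an embedding $\KK'\hookrightarrow \KK$ of $\kk(x)$-algebras, yielding a morphism $j:\Spec\KK\to \Spec\KK'$ with $y=i\circ j$. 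The pullback $j^*$ is a ring homomorphism between the relevant Grothendieck groups that commutes with adjoining $\LL^{\pm 1/2}$ and with inverting the elements $\LL^r-1$; in particular it sends $\Ka_0(\Var/\KK')[\LL^{-1/2}]$ into $\Ka_0(\Var/\KK)[\LL^{-1/2}]$. Consequently $y^*\DTS^{mot}=j^*i^*\DTS^{mot}$ has the asserted integrality.

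The only point requiring even minor care is the observation that pullback along $j$ respects the integrality condition, and this is essentially formal once one unwinds the definitions of the Grothendieck groups and their formal localizations. There is no genuine obstacle beyond Theorem \ref{intconjsv} itself, where all the real content resides.
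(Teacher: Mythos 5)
Your proposal is correct and is exactly the intended argument: the paper states this corollary without proof as an immediate consequence of Theorem \ref{intconjsv}, relying precisely on the two observations you make, namely that an algebraically closed residue field admits no nontrivial finite extensions (so $\KK=\kk(x)$ in the theorem), and that for a geometric point one composes with a further pullback $j^\ast$, which formally preserves the integrality subring. No gaps.
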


\begin{proof}[Proof of the theorem]
Let $x\in \Msp^{ss}_d$ be a point of $\Msp^{ss}_\mu$ with residue field $\kk(x)$ and dimension vector $d$. As $R^{ss}_d\to \Msp^{ss}_d$ is of finite type and surjective on (geometric) points, we can certainly find a lift $\bar{x}\in R_d^{ss}$ with residue field $\kk(\bar{x})\supset \kk(x)$ being a finite extension. The point $\bar{x}$ corresponds to a semistable representation $V$ of $Q$ defined over $\kk(\bar{x})$ along with a choice of a basis of $V$ which is not important. By passing to a finite extension $\KK\supset \kk(\bar{x})$, we can assume that every stable Jordan--H\"older factor of $V$ remains stable under any base change. Indeed, the dimension of $V$ is finite and we cannot have an infinite chain of field extensions such that the number of Jordan--H\"older factors $E_k$ of $V$ strictly increases. Note that $\KK\supset \kk(x)$ is separable as $\Char(\kk)=0$. The associated  polystable representation for $V$ is $\bigoplus_{k=1}^sE_k^{a_k}$ with pairwise non-isomorphic stable representations $E_k$ of dimension vector $d^k=\dim E_k$ and multiplicity $a_k\in \NN\setminus \{0\}$. Hence,  $d=\sum_{k=1}^sa_kd^k$, and we write $E =(E_k)_{k=1}^s$ for the $s$-tuple of simple objects.\\

Changing the multiplicities, we get a family of polystable quiver representation on $\NN^s\times \Spec\KK$ with $\bigoplus_{k=1}^s E_k^{n_k}$ being the fiber over $n=(n_1,\ldots,n_k)\in \NN^s$. Let $\iota_E:\NN^s\times\Spec\KK \longrightarrow \Msp^{ss}_\mu$ be the associated (coarse) classifying map. By construction, the point corresponding to $(n_k)=(a_k)$ maps to $x$. \\
Note that $\underline{\Ka}_0(\Var/\NN^s\times\Spec\KK)[\LL^{-1/2}, (\LL^r-1)^{-1}: r\ge 1]$ can be identified with the ring \[\Ka_0(\Var/\KK)[\LL^{-1/2}, (\LL^r-1)^{-1}: r\ge 1][[t_1,\ldots,t_s]]\] of power series in $s$ variables. We will prove that $\iota_E^\ast \DTS^{mot}_\mu$ lies in the image of \[\Ka_0(\Var/\KK)[\LL^{-1/2}][[t_1,\ldots,t_s]] \longrightarrow \Ka_0(\Var/\KK)[\LL^{-1/2}, (\LL^r-1)^{-1}: r\ge 1][[t_1,\ldots,t_s]]\] 
which implies the theorem after restriction to the component indexed by $(n_k)=(a_k)$. Let us form the following fiber product: 
\[ \xymatrix @C=2cm { \Mst_E \ar[r]^{\tilde{\iota}_E} \ar[d]_{\tilde{p}} & \Mst^{ss}_\mu \ar[d]^p \\ \NN^s\times\Spec\KK \ar[r]^{\iota_E} & \Msp^{ss}_\mu } \] 
The stack $\Mst_E =\sqcup_{n\in \NN^s} \Mst_{E,n}$ can be seen as the stack of (semistable) representations defined over $\KK$ and having a decomposition series with factors in the collection $E =(E_k)_{k=1}^s$. We want to apply Lemma \ref{lambda_pull_back} to $N=\NN^s\times \Spec\KK$ and $M=\Msp^{ss}_\mu$. By our construction and the Krull--Schmidt theorem, $u_n$ is  a bijection between the points of the underlying schemes. Moreover, the local rings of $N\times_M M^{\times_\kk n}$ are $\KK$-algebras with a map to $\KK$ given by $\iota^\ast=\iota_E^\ast$. Thus, their residue field is $\KK$, and $u_n$ is a closed embedding inducing a bijection between geometric points. Hence, the Lemma applies. Since $p_!$ commutes with base change, we finally get 
\[ \tilde{p}_! \bigl( \tilde{\iota}_E^\ast \ICS^{mot}_{\Mst^{ss}_\mu} \bigr) = \Sym \Bigl( \frac{1}{\LL^{1/2}-\LL^{-1/2}} \iota_E^\ast\DTS^{mot}_\mu \Bigr). \]
Note that $\tilde{\iota}_E^\ast \ICS^{mot}_{\Mst^{ss}_\mu}$  restricted to $\Mst_{E,n}$ is just $\LL^{(d(n),d(n))/2}[\Mst_{E,n} \xrightarrow{\id} \Mst_{E,n}]$, where $d(n):=\sum_{k=1}^sn_kd^k$ is the dimension vector of $\bigoplus_{k=1}^s E^{n_k}_k$. \\
Let us introduce the ``Ext-quiver'' $Q_\xi $ of the collection $\xi=(d^k)_{k=1}^s$ of dimension vectors. Its vertex set is $\{1,\ldots,s\}$, and the number of arrows from $k$ to $l$ is given by $\delta_{kl}-(d^k,d^l)=\dim_\KK \Ext^1_{\KK Q\rep}(E_k,E_l)$. For a dimension vector $n\in \NN^s$ of $Q_\xi$, we denote with $R_{n}(Q_\xi)\cong \mathbb{A}_\KK^{\sum_{\alpha:k \to l}n_kn_l}$ the affine space parameterizing all representations of $Q_\xi$ on a fixed $\KK$-vector space of dimension $n$. Recall that $R_n(Q_\xi)/G_{n}$ with $G_n=\prod_{k=1}^s \Gl_\KK(n_k)$ is the stack of $n$-dimensional $\KK Q_\xi$-representations on any vector space of dimension vector $n$. \\   
As $(-,-)$ is symmetric by assumption on $\zeta$, the quiver $Q_\xi$ is symmetric, and we can apply the following result of Efimov to the quiver $Q_\xi$. 

\begin{theorem}[\cite{Efimov}, Theorem 1.1] 
Given any quiver $Q$ with vertex set $\{1,\dots,s\}$, we define for every $n\in \NN^s\setminus \{0\}$ the ``motivic'' Donaldson--Thomas invariant $\DT^{mot}(Q)_{n}\in \ZZ[\LL^{\pm 1/2}, (\LL^r-1)^{-1}: r\ge 1]$ of $Q$ with respect to the trivial stability condition $\theta=0$ by means of 
\[ \sum_{n\in \NN^s} \LL^{(n,n)/2}\frac{[R_{n}(Q)]}{[G_{n}]}\, t^n =: \Sym \Bigl( \frac{1}{\LL^{1/2}-\LL^{-1/2}} \sum_{n\in \NN^s\setminus\{0\}} \DT^{mot}(Q)_{n}t^n \Bigr), \]
where we might think of $\LL^{1/2}$ as a formal variable. 
If the quiver $Q$ is symmetric, the invariant $\DT^{mot}(Q)_n$ is contained in the Laurent subring $\ZZ[\LL^{\pm 1/2}]$ of $\ZZ[\LL^{\pm 1/2}, (\LL^r-1)^{-1}: r\ge 1]$.
\end{theorem}

When we apply Efimov's Theorem to $Q_\xi$ and specialize $\LL$ to $[\AA_\KK^1]$, we use the notation $(-,-)_{Q_\xi}, R_{n}(Q_\xi)$ and $\DT^{mot}(Q_\xi):=\sum_{n\in \NN^s\setminus\{0\}} \DT^{mot}(Q_\xi)_nt^n$ to distinguish the objects from their counterparts for $Q$. Theorem \ref{intconjsv} is then a direct consequence of the following result.  \end{proof}

\begin{proposition} \label{localDT}
Let us denote with $\DT^{mot}(Q_\xi)|_{\LL^{1/2}\mapsto \LL^{-1/2}}$ the series in \\ $\ZZ[\LL^{\pm 1/2}][[t_1,\ldots,t_s]]$ obtained by the indicated substitution. If $\zeta$ is $\mu$-generic, then \[\DT^{mot}(Q_\xi)|_{\LL^{1/2}\mapsto \LL^{-1/2}}=\iota_E^\ast \DTS^{mot}_\mu.\] 
In particular, $\iota_E^\ast\DTS^{mot}_\mu$ is an element of the subring $\ZZ[\LL^{\pm 1/2}][[t_1,\ldots,t_s]]$ which also embeds into  the subring $\Ka_0(\Var/\KK)[\LL^{-1/2}][[t_1,\ldots,t_s]]$ of $\Ka_0(\Var/\KK)[\LL^{-1/2}, (\LL^r-1)^{-1}: r\ge 1][[t_1,\ldots,t_s]]$.
\end{proposition}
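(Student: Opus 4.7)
The plan is to identify both sides of the claimed identity with the same plethystic exponential and conclude by uniqueness of the plethystic logarithm. First, from the formula derived just before the statement via Lemma~\ref{lambda_pull_back}, namely
\[ \sum_{n\in\NN^s}\LL^{(d(n),d(n))/2}[\Mst_{E,n}]\,t^n = \Sym\!\Bigl(\tfrac{1}{\LL^{1/2}-\LL^{-1/2}}\,\iota_E^*\DTS^{mot}_\mu\Bigr), \]
I would rewrite the left hand side using the Ext-quiver (``local quiver'') description of Reineke \cite{Reineke1}: the fiber $\Mst_{E,n}$ of $p$ over $\bigoplus_k E_k^{n_k}$ is equivalent to the quotient stack $[N_n(Q_\xi)/G_n]$ of the nullcone of the $G_n$-action on $R_n(Q_\xi)$. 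Combined with the elementary identity $(n,n)_{Q_\xi}=(d(n),d(n))$, this gives
\[ \sum_n\LL^{(n,n)_{Q_\xi}/2}\tfrac{[N_n(Q_\xi)]}{[G_n]}\,t^n = \Sym\!\Bigl(\tfrac{1}{\LL^{1/2}-\LL^{-1/2}}\,\iota_E^*\DTS^{mot}_\mu\Bigr). \qquad(\dagger) \]

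The second main step is the motivic ``duality'' identity for the symmetric quiver $Q_\xi$:
\[ \Bigl(\sum_n \LL^{(n,n)_{Q_\xi}/2}\tfrac{[N_n(Q_\xi)]}{[G_n]}\,t^n\Bigr)\cdot\Bigl(\sum_n \LL^{-(n,n)_{Q_\xi}/2}\bigl(\tfrac{[R_n(Q_\xi)]}{[G_n]}\bigr)\big|_{\LL^{1/2}\mapsto\LL^{-1/2}}\,t^n\Bigr)=1. \qquad(\ast) \]
This should be verified by direct motivic computation: $[R_n(Q_\xi)]=\LL^{\dim R_n}$ is a pure power of $\LL$, one has $[G_n]|_{\LL\mapsto\LL^{-1}}=(-1)^{|n|}\LL^{-\sum_k n_k(n_k+1)/2}[G_n]$, and the nullcone class $[N_n(Q_\xi)]$ is analyzed via the Luna/Hesselink stratification of $R_n(Q_\xi)$ by $G_n$-orbit type, which for symmetric quivers admits the requisite self-dual combinatorial shape. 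The identity is easily verified by hand for the Jordan quiver, where the two factors become $\prod_{i\ge 0}(1-\LL^i t)^{\pm 1}$.

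Next I would apply the substitution $\LL^{1/2}\mapsto\LL^{-1/2}$ to Efimov's formula $\sum_n\LL^{(n,n)_{Q_\xi}/2}[R_n(Q_\xi)]/[G_n]\,t^n=\Sym\bigl(\tfrac{\DT^{mot}(Q_\xi)}{\LL^{1/2}-\LL^{-1/2}}\bigr)$. Since $\DT^{mot}(Q_\xi)\in\ZZ[\LL^{\pm 1/2}][[t_1,\ldots,t_s]]$ by Efimov's theorem (applicable because $\mu$-genericity of $\zeta$ makes $Q_\xi$ symmetric), the substitution is well defined; and because $\LL^{1/2}-\LL^{-1/2}$ changes sign under it and $\Sym(-g)=\Sym(g)^{-1}$ in the filtered $\lambda$-ring, combining the substituted Efimov formula with $(\ast)$ yields
\[ \sum_n\LL^{(n,n)_{Q_\xi}/2}\tfrac{[N_n(Q_\xi)]}{[G_n]}\,t^n = \Sym\!\Bigl(\tfrac{1}{\LL^{1/2}-\LL^{-1/2}}\,\DT^{mot}(Q_\xi)\big|_{\LL^{1/2}\mapsto\LL^{-1/2}}\Bigr). \qquad(\ddagger) \]
Comparing $(\dagger)$ with $(\ddagger)$ and invoking uniqueness of the plethystic logarithm in $\Ka_0(\Var/\KK)[\LL^{-1/2},(\LL^r-1)^{-1}:r\ge 1][[t_1,\ldots,t_s]]$ (filtered by $|n|$) then gives $\iota_E^*\DTS^{mot}_\mu = \DT^{mot}(Q_\xi)|_{\LL^{1/2}\mapsto\LL^{-1/2}}$, which lies in $\ZZ[\LL^{\pm 1/2}][[t_1,\ldots,t_s]]$ by Efimov --- giving the ``in particular'' assertion as well.

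The main obstacle is $(\ast)$; geometrically it reflects the $\GG_m$-action on $R_n(Q_\xi)$ by scaling the arrows (for which $0$ is the unique fixed point of the induced action on $R_n(Q_\xi)/\!\!/G_n$), but the motivic bookkeeping across all dimension vectors $n$ requires some care to carry out uniformly for arbitrary symmetric $Q_\xi$.
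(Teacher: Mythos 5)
Your reduction is structurally the paper's own: both sides are written as $\Sym$ of a quantity divided by $\LL^{1/2}-\LL^{-1/2}$, and everything is reduced to a single ``product equals $1$'' identity between the generating series of the fibers of $p$ over the points $\iota_E(n)$ and the $\LL^{1/2}\mapsto\LL^{-1/2}$--dual of Efimov's series; your $(\dagger)$, the use of $\Sym(-g)=\Sym(g)^{-1}$, the identity $(n,n)_{Q_\xi}=(d(n),d(n))$, and the final appeal to uniqueness of the plethystic logarithm all match equation (\ref{nilpotent}) and the surrounding argument. The problem is that your identity $(\ast)$ \emph{is} that key identity (after the local-quiver translation $[\Mst_{E,n}]=[N_n(Q_\xi)]/[G_n]$), i.e.\ it is the entire mathematical content of the proposition, and you have not proved it. ``Analyzed via the Luna/Hesselink stratification, which for symmetric quivers admits the requisite self-dual combinatorial shape'' is not an argument: the nullcone of a general symmetric quiver has no closed-form motive, no self-duality of its Hesselink strata is available, and the $\GG_m$-scaling heuristic does not produce the inverse relation between the two series. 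A check for the Jordan quiver does not substitute for the general case.

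What actually closes this gap in the paper is a Ringel--Hall computation. One sets $f=\sum_{n\in\NN^s}(-1)^{|n|}\LL^{\sum_k\binom{n_k}{2}}[\Spec\KK/G_n\to\Mst^{ss}_{\mu,\KK}]$, supported on the polystable points $\bigoplus_k E_k^{n_k}$, and $g=[\Mst_E\to\Mst^{ss}_{\mu,\KK}]$. The convolution $f\ast g$ over a representation $F$ is computed by the space of embeddings of $\bigoplus_k E_k^{n_k}$ into the socle $\bigoplus_k E_k^{N_k}$ of $F$, a product of Grassmannians $\prod_k\Gr^{N_k}_{n_k}$; the resulting coefficient $\sum_{0\le n_k\le N_k}\prod_k(-1)^{n_k}\LL^{\binom{n_k}{2}}\left[{N_k\atop n_k}\right]$ vanishes by the standard $q$-binomial identity unless $N=0$, so $f\ast g=1$. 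Applying the integration map $I^{ss}_\mu$ --- which is a ring homomorphism \emph{precisely because} $\zeta$ is $\mu$-generic (Lemma \ref{integration_map}); this is where genericity enters, not through Efimov alone --- gives $I(f)\cdot I(g)=1$, and a direct computation with $[R_n(Q_\xi)]=\LL^{-(n,n)_{Q_\xi}+\sum_k n_k^2}$ identifies $I(f)$ with your second factor and $I(g)$ with your first. Note that this route also sidesteps your step (a): the paper never needs the stack-level equivalence $\Mst_{E,n}\simeq N_n(Q_\xi)/G_n$ (which would itself require justification over the non-closed field $\KK$), since the Hall computation is carried out for the original quiver $Q$ directly on $\Mst_E$. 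Without the Hall-algebra step, or some genuine replacement for it, your proposal is not a proof.
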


\begin{remark} \rm
The substitution $\LL^{1/2}\mapsto \LL^{-1/2}$ has an intrinsic meaning. For any base $\BB$ there is a duality operation on $\underline{\Ka}_0(\Var/\BB)[\LL^{-1/2}, (\LL^r-1)^{-1}: r\ge 1]$ which can be seen as a motivic version of (relative) Poincar\'{e} duality. See \cite{Bittner04}, section 6 for more details on this.
\end{remark}

\begin{proof} As the substitution $\LL^{1/2}\mapsto \LL^{-1/2}$ is compatible with the $\lambda$-ring structure of $\ZZ[\LL^{\pm 1/2}, (\LL^r-1)^{-1}: r\ge 1][[t_1,\ldots,t_s]]$, which contains $\ZZ[\LL^{\pm 1/2}][[t_1,\ldots,t_s]]$ as a $\lambda$-subring, it suffices to show the identity 
\begin{equation} \label{nilpotent} \Bigl(\sum_{n\in \NN^s} \LL^{(n,n)_{Q_\xi}/2} \frac{[R_{n}(Q_\xi)]}{[G_n]}t^n \Bigr)\Big|_{\LL^{1/2} \to \LL^{-1/2}} \cdot \Bigl(\sum_{m\in \NN^s} \LL^{(d(m),d(m))/2} [\Mst_{E,m}]t^m\Bigr) = 1 
\end{equation}
in $\Ka_0(\Var/\KK)[\LL^{-1/2}, (\LL^r-1)^{-1}: r\ge 1][[t_1,\ldots,t_s]]$. Indeed, the factor on the left hand side is by definition 
\[\Sym\Bigl(\frac{\DT^{mot}(Q_\xi)}{\LL^{1/2}-\LL^{-1/2}}\Bigr)\Bigr|_{\LL^{1/2}\mapsto \LL^{-1/2}} = \Sym\Bigl(- \frac{\DT^{mot}(Q_\xi)|_{\LL^{1/2}\mapsto \LL^{-1/2}}}{\LL^{1/2}-\LL^{-1/2}}\Bigr).\]  
On the other hand, the factor on the right hand side is nothing else than 
\[ \tilde{p}_! (\tilde{\iota}_E^\ast \ICS_{\Mst^{ss}_\mu}) = \Sym\Bigl( \frac{ \iota_E^\ast \DTS^{mot}_\mu}{\LL^{1/2}-\LL^{-1/2}}\Bigr). \]
Consider the following two motivic functions on $\Mst^{ss}_{\mu,\KK}:=\Mst^{ss}_\mu\times_\kk \Spec\KK$. 
\[ f:=\sum_{n\in \NN^s}(-1)^{|n|}\LL^{\sum_{k=1}^s{n_k \choose 2}}[ \Spec \KK /G_n \rightarrow \Mst^{ss}_{\mu,\KK}] \quad\mbox{and}\quad g:=[\Mst_E \rightarrow \Mst^{ss}_{\mu,\KK}],\]
where for $n\in \NN^s$ the quotient stack $\Spec \KK/G_n$ maps to the object $\bigoplus_{k=1}^s E_k^{n_k}$ of dimension vector $d(n)$ and its automorphism group. In particular, the morphisms used to define $f$ and $g$ correspond to closed substacks of $\Mst^{ss}_{\mu,\KK}$. We compute the convolution product $f\ast g$ by means 
of the following diagram
\[ \xymatrix { \mathcal{Z}_{d(n),d(m)} \ar@{^{(}->}[r] \ar[d] & \mathfrak{Exact}_{d(n),d(m),\KK} \ar[d]_{\pi_1\times \pi_3} \ar[r]^{\pi_2} & \Mst^{ss}_{d(n)+d(m),\KK} \\ \Spec \KK/G_n \times_\KK \Mst_{E,d(m)} \ar@{^{(}->}[r] & \Mst^{ss}_{d(n),\KK} \times_\KK \Mst^{ss}_{d(m),\KK},  } \] 
with the  square being cartesian and $\mathfrak{Exact}_{d(n),d(m),\KK}$ denoting the stack of short exact sequences  in $\KK Q\rep$ with prescribed dimensions for the first and third object in the sequence. The morphisms $\pi_1,\pi_2$ and $\pi_3$ map a sequence to the the corresponding entries. Since $\pi_2$ is representable, $\mathcal{Z}_{d(n),d(m)} \longrightarrow \Mst^{ss}_{d(n)+d(m),\KK}$ is representable, too. In fact, $\mathcal{Z}_{d(n),d(m)}$ maps to the substack of $\Mst_E$ parameterizing representations $F$ that are extensions of a representation  with dimension vector $d(m)$ and Jordan--H\"older factors among the $(E_k)_{k=1}^s$ by the polystable representation $\bigoplus_{k=1}^s E_k^{n_k}$. In particular, the  Jordan--H\"older factors of $F$ are also among the $(E_k)_{k=1}^s$, and $\bigoplus_{k=1}^s E_k^{n_k}$ must embed into the socle $\bigoplus_{k=1}^s E_k^{N_k}$ of $F$ for certain integers $N_k$ depending on $F$. The space of such embeddings, that is, the fiber of the map $\mathcal{Z}_{d(n),d(m)} \longrightarrow \Mst^{ss}_{d(n)
+d(m),\KK}$ over $F$, is given by the product of finite Grassmannians $\prod_{k=1}^s\Gr_{n_k}^{N_k}$ over $\KK$. Hence, the convolution product $f\ast g$ restricted to $F \in \Mst^{ss}_{d(n)+d(m),\KK}$ is
\[(f\ast g)|_{\Spec\KK(F)}=\sum_{0\leq n_k\leq N_k}\prod_{k=1}^s(-1)^{n_k}\LL^{{n_k}\choose 2}\left[{N_k\atop n_k}\right],\] 
in $\Ka_0(\Var/\KK(F))$ since the $\LL$-binomial coefficient $\left[{N_k\atop n_k}\right]$ are the  motives of the Grassmannians $\Gr^{N_k}_{n_k}$. This identity does not only hold pointwise. For any dimension vector $l\in \NN^s$ let $R^{E}_{d(l)}\subset R_{d(l),\KK}:=R_{d(l)}\times_\kk\Spec\KK$ denote the atlas of $\Mst_{E,l}$. It is a closed subset of $R_{d(l),\KK}$ containing only finitely many closed closed orbits for the group $G_{d(l),\KK}=G_{d(l)}\times_\kk\Spec\KK$. The socle of the universal (trivialized) family $\FF$ on $R^E_{d(l)}$ is the image of the monomorphism
\[
\bigoplus_{k=1}^s E_k\otimes_\KK \mathcal{H}om(E_k, \FF) \longrightarrow \FF. 
\]
The family $\mathcal{H}om(E_k, \FF)$ of linear spaces is a vector bundle when restricted to a  stratification of $R^E_{d(l)}$. The $G_{d(l),\KK}$-invariant strata $S_N$ indexed by $r\ge 1^s$ contain the points $M\in R^E_{d(l)}$ with $\dim_\KK \mathcal{H}om(E_k, \FF)|_M=N_k$ for all $1\le k\le s$. For $n+m=l$, let is form the fiber product
\[ \xymatrix { Z_{d(n),d(m),N} \ar[d] \ar[r]^\theta & S_N \ar[d] \\ \mathcal{Z}_{d(n),d(m)} \ar[r] & \Mst^{ss}_{d(l),\KK}. } \]
The map $\theta$ is just the product of the relative Grassmannians of the vector bundles $\mathcal{H}om(E_k,\FF)$ on $S_N$. It  is a Zariski locally trivial $\prod_{k=1}^s\Gr_{n_k}^{N_k}$-fibration. The vertical maps are principal $G_{d(l),\KK}$-bundles over their  image $Z_{d(n),d(m),N}/G_{d(l),\KK}$ and $S_N/G_{d(l),\KK}$ respectively. The images are locally closed substacks of  $\mathcal{Z}_{d(n),d(m)}$ and $\Mst^{ss}_{d(l),\KK}$ respectively. Summing up over all $m,n,r\ge 1^s$ with fixed $n+m=l$ and using equation (\ref{push_pull}), we get 
\[ (f\ast g)|_{\Mst^{ss}_{d(l),\KK}}= \sum_{r\ge 1^s} \Big(\sum_{0\leq n_k\leq N_k}\prod_{k=1}^s(-1)^{n_k}\LL^{{n_k}\choose 2}\left[{N_k\atop n_k}\right]\Big)[S_N/G_{d(l),\KK}\hookrightarrow \Mst^{ss}_{d(l),\KK}] \] 
as we want. Note that the outer sum is finite as $S_N\not=\emptyset$ for only finitely many $N$. A standard identity for $\LL$-binomial coefficients  shows that the term in the big brackets vanishes as soon as $N\not=0$. The case $N=0$ can only give a nonzero contribution if $l=d(l)=0$ as every nontrivial representation has a nontrivial socle. One shows easily $(f\ast g)|_{\Mst^{ss}_{0,\KK}}=1$, and the formula $f\ast g= 1$ is proven. 
Using Lemma \ref{integration_map}, we get the identity $1=I(f\ast g)=I(f)\cdot I(g)$ of motivic functions on $\Msp^{ss}_\mu\times_\kk\Spec\KK$ which are actually supported on the closed subscheme $\NN^s\times \Spec\KK \hookrightarrow \Msp^{ss}_\mu\times_\kk\Spec\KK$ via the embedding induced by $\iota_E$. Using $[R_{n}(Q_\xi)]=\LL^{-(n,n)_{Q_\xi}+\sum_{k=1}^sn_k^2}=\LL^{-(d(n),d(n))+\sum_{k=1}^s n_k^2}$, a simple computation shows that $I(f)$ is the first factor in equation (\ref{nilpotent}) while the second is obviously $I(g)$. 
\end{proof}
\begin{corollary} \label{localDT2}
Let $V=\bigoplus_{k=1}^s E_k^{m_k}$ be a polystable $\KK Q$-representation corresponding to a $\KK$-point $y:\Spec\KK\to  \Msp^{ss}_\mu$. Assume that the stable representations $E_k$ remain stable under base change. As before, $Q_\xi$ denotes the $\Ext^1$-quiver of the collection $(E_k)_{k=1}^s$ of stable objects. Let $\DT^{mot}(Q_\xi)^{nilp}_m:=\DTS^{mot}(Q_\xi)(0_m)$ be the ``value'' of $\DTS^{mot}(Q_\xi)$ (with respect to the trivial stability condition) at the ``origin'' in $\Msp(Q_\xi)_m$ corresponding to the zero-representation $0_m$ of dimension $m=(m_k)_{k=1}^s$. If $\zeta$ is $\mu$-generic, then $\DTS^{mot}(y):=y^\ast\DTS^{mot}=\DT^{mot}(Q_\xi)_m^{nilp}$ for the value  of $\DTS^{mot}$ at $y:\Spec\KK\to \Msp^{ss}_\mu$.
\end{corollary}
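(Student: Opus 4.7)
The plan is to reduce the corollary to two successive applications of Proposition \ref{localDT}: first to the original quiver $Q$ with the collection $E=(E_k)_{k=1}^s$, and then to the Ext-quiver $Q_\xi$ with the collection of its simple representations.

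By construction, the classifying map $\iota_E:\NN^s\times \Spec\KK \to \Msp^{ss}_\mu$ sends the component indexed by $n=(n_k)$ to the polystable representation $\bigoplus_k E_k^{n_k}$, so the $\KK$-point $y$ corresponds to $n=m$ under this map. Identifying $\underline{\Ka}_0(\Var/\NN^s\times\Spec\KK)[\LL^{-1/2},(\LL^r-1)^{-1}:r\ge 1]$ with a ring of power series in $t_1,\ldots,t_s$, the value $\DTS^{mot}(y)=y^\ast\DTS^{mot}$ is precisely the coefficient of $t^m$ in $\iota_E^\ast\DTS^{mot}_\mu$. Proposition \ref{localDT} then gives
\[ \DTS^{mot}(y)=\DT^{mot}(Q_\xi)_m\big|_{\LL^{1/2}\mapsto \LL^{-1/2}}. \]

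The second step identifies this expression with the nilpotent DT invariant. The key observation is that the Ext-quiver of the collection $S=(S_k)_{k=1}^s$ of simple representations of $Q_\xi$ (one at each vertex) is $Q_\xi$ itself, since the number of arrows from $k$ to $l$ in this new Ext-quiver is $\delta_{kl}-(e_k,e_l)_{Q_\xi}$, which equals the number of arrows $k\to l$ in $Q_\xi$. Because $\zeta$ is $\mu$-generic, $Q_\xi$ is symmetric, so the trivial stability on $Q_\xi$ is $0$-generic and Proposition \ref{localDT} applies. The resulting classifying map $\iota_S:\NN^s\times \Spec\KK\to \Msp(Q_\xi)$ sends the component $n$ to $\bigoplus_k S_k^{n_k}$, which coincides with the zero representation $0_n\in\Msp(Q_\xi)_n$. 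Therefore the coefficient of $t^m$ in $\iota_S^\ast\DTS^{mot}(Q_\xi)$ equals $\DTS^{mot}(Q_\xi)(0_m)=\DT^{mot}(Q_\xi)_m^{nilp}$, while by the proposition it also equals $\DT^{mot}(Q_\xi)_m\big|_{\LL^{1/2}\mapsto \LL^{-1/2}}$.

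Combining the two coefficient identifications yields $\DTS^{mot}(y)=\DT^{mot}(Q_\xi)_m^{nilp}$, as desired. I do not anticipate any serious obstacle here: once the Euler-form computation identifying the Ext-quiver of simples of $Q_\xi$ with $Q_\xi$ itself is in place, the whole argument is coefficient bookkeeping for formal power series. The only hypotheses to double-check are that $Q_\xi$ is symmetric (which follows from $\mu$-genericity of $\zeta$ for $Q$) and that the simples of $Q_\xi$ remain stable under arbitrary base change---but this is automatic since every representation of $Q_\xi$ of unit dimension vector is simple.
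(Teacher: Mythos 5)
Your proof is correct and follows essentially the same route as the paper's: the paper likewise deduces the corollary by applying Proposition \ref{localDT} once to $Q$ with the collection $(E_k)$ and once to $Q_\xi$ with its collection of vertex simples, using that the $\Ext^1$-quiver of the simples of $Q_\xi$ is $Q_\xi$ itself. Your explicit checks of symmetry of $Q_\xi$ and of stability of the simples under base change are sensible bookkeeping that the paper leaves implicit.
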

\begin{proof}
The zero-representation $0_m$ of dimension m is the semisimple $Q_\xi$-representation $\bigoplus_{k=1}^s S_k^{m_k}$, where $S_k$ denotes the 1-dimensional zero-representation of $\KK Q_\xi$ at vertex $k$. We simply apply Proposition \ref{localDT} to the category  $\KK Q_\xi\rep$ and the collection $(S_k)_{k=1}^s$. One should also take into account that the local $\Ext^1$-quiver of this collection is $Q_\xi$ again. Thus, $\DTS^{mot}(Q_\xi)(0_m) =\DT^{mot}(Q_\xi)|_{\LL^{1/2}\mapsto \LL^{-1/2}}=y^\ast \DTS^{mot}$. 
\end{proof}

\begin{corollary} \label{integrality_stratification}
 If  $\zeta$ is $\mu$-generic, there is a stratification of $\Msp^{ss}_\mu$ into connected strata $S_\kappa$ and there are \'{e}tale covers $j_\kappa:\tilde{S}_\kappa \to S_\kappa$ of the strata such that $\DTS^{mot}|_{\tilde{S}_\kappa}:=j_\kappa^\ast \DTS^{mot}$ is in the image of 
 \[ \Ka_0(\Var/\tilde{S}_\kappa)[\LL^{-1/2}] \longrightarrow \Ka_0(\Var/\tilde{S}_\kappa)[\LL^{-1/2}, (\LL^r-1)^{-1}: r\ge 1].\]
\end{corollary}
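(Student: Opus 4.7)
The plan is to use the Luna stratification of $\Msp^{ss}_\mu$ and execute the local ``Ext-quiver'' computation of Proposition \ref{localDT} in families after passing to a symmetry-breaking \'etale cover of each stratum. For each dimension vector $d \in \Lambda_\mu$ and each decomposition type $\xi = ((d^1,m_1),\ldots,(d^s,m_s))$ with $\sum_k m_k d^k = d$, let $S_\xi \subset \Msp^{ss}_d$ be the locally closed connected subscheme parametrizing polystable representations $\bigoplus_k E_k^{m_k}$ with pairwise non-isomorphic stable $E_k$ of dimension vector $d^k$. These strata give a finite stratification of $\Msp^{ss}_\mu$, so the statement reduces to proving it on each $S_\xi$.

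The \'etale cover $j_\xi : \tilde{S}_\xi \to S_\xi$ is obtained in two steps. First, the map $\prod_k \Msp^{st}_{d^k} \to S_\xi$ sending $(E_k)_k \mapsto \bigoplus_k E_k^{m_k}$ realises $S_\xi$ as the quotient by the finite group $\Sigma_\xi$ permuting the blocks with equal $(d^k,m_k)$-data, and this quotient admits \'etale local sections. Second, one refines further (if necessary) so that the universal tuple of stable factors remains stable under every base change, exactly as in the proof of Theorem \ref{intconjsv}. Over $\tilde{S}_\xi$ one thus has well-defined families $\EE_1,\ldots,\EE_s$ of stable $\kk Q$-representations, hence a classifying morphism $\tilde{\iota}_\EE : \NN^s \times \tilde{S}_\xi \longrightarrow \Msp^{ss}_\mu \times_\kk \tilde{S}_\xi$ for the family $\bigl(\bigoplus_k \EE_k^{n_k}\bigr)_{n \in \NN^s}$.

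With this setup the proof of Proposition \ref{localDT} runs relatively. Lemma \ref{lambda_pull_back} applies to $\tilde{\iota}_\EE$ by Krull--Schmidt in families, since the hypotheses on $u_n$ reduce to the pointwise statement already verified over closed points in the proof of Theorem \ref{intconjsv}. The pair of motivic functions $f, g$ from that proof is defined over $\tilde{S}_\xi$, and the computation of their convolution $f \ast g$ goes through provided one further stratifies $\tilde{S}_\xi$ so that the socle dimensions $N_k = \dim \Hom(\EE_k, \FF)$ in the universal family are constant. On each such piece the spaces $\mathcal{Z}_{d(n),d(m)}$ fibre as relative products $\prod_k \Gr^{N_k}_{n_k}$ over the socle-type stratum; the standard $\LL$-binomial identity then yields $f \ast g = 1$, and the relative integration map (a direct extension of Lemma \ref{integration_map}) gives
\[
 j_\xi^\ast \DTS^{mot} \;=\; \pr^\ast\bigl(\DT^{mot}(Q_\xi)\big|_{\LL^{1/2}\mapsto \LL^{-1/2}}\bigr)
\]
after restriction to the component indexed by $(m_k)$. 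Since $Q_\xi$ is symmetric by $\mu$-genericity, Efimov's theorem places this class in $\ZZ[\LL^{\pm 1/2}]$, hence in the image of $\Ka_0(\Var/\tilde{S}_\xi)[\LL^{-1/2}]$.

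The main obstacle is the bookkeeping for this relative socle stratification: in families the sheaves $\mathcal{H}om(\EE_k, \FF)$ are only locally free after passing to a stratification by the $N_k$, and one must check that these strata interact compatibly with the Luna stratification and with our \'etale cover so that the pointwise cancellation of $\LL$-binomial coefficients globalises. Once this is in place, the crucial feature is that the right-hand side of the displayed identity is pulled back from $\Spec\kk$ and is in particular independent of the fibre of $\tilde{S}_\xi$, so the desired absence of $(\LL^r-1)^{-1}$-denominators is a consequence of Efimov's absolute result applied to the single symmetric quiver $Q_\xi$ rather than of any global geometric fact about $\Msp^{ss}_\mu$.
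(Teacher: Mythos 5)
Your strategy is genuinely different from the paper's, and it carries gaps that are deferred rather than closed. The paper does \emph{not} use the Luna stratification here: it runs a Noetherian induction on generic points. One takes the generic point $x$ of $\Msp^{ss}_d$, applies Theorem \ref{intconjsv} to get integrality over a finite separable extension $\KK\supset\kk(x)$, realizes $\Spec\KK$ as the generic point of an \'etale neighborhood $\Spec B\to\Spec A$ (normalization of a Zariski neighborhood in $\KK$), and then \emph{spreads out} the absence of denominators from $\Quot(B)$ to some $\Spec B_b$ by a purely formal argument in the Grothendieck ring: the presentation of $\Ka_0(\Var/\Spec B)$ from Remark \ref{reduction_to_affine_case} involves only finitely many relations, each of which lifts from $\Quot(B)$ to $B_b$ for suitable $b$. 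One then passes to the closed complement and iterates. This completely avoids redoing any moduli-theoretic computation over a positive-dimensional base; the resulting strata are whatever the induction produces, not the Luna strata.

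Your route instead globalizes Proposition \ref{localDT} over \'etale covers of Luna strata, and if completed it would prove something stronger (that $j_\xi^\ast\DTS^{mot}$ is a class in $\ZZ[\LL^{\pm 1/2}]$ times the fundamental class of $\tilde{S}_\xi$). But two steps are not reductions to what is already in the paper. First, the assertion that the hypotheses of Lemma \ref{lambda_pull_back} ``reduce to the pointwise statement'' is not right: being a closed embedding is not a pointwise condition, and the paper's verification for $u_n$ (the local rings of $N\times_M M^{\times_\kk n}$ are $\KK$-algebras with residue field $\KK$) is specific to $N$ being a disjoint union of copies of $\Spec\KK$. Over a positive-dimensional base $\tilde{S}_\xi$ you must first set up the fiberwise $\lambda$-ring structure ($\Sym^n$ over $\tilde{S}_\xi$ rather than over $\kk$), prove the relative analogue of Lemma \ref{lambda_pull_back} relating it to $\Sym^n_\kk$ on $\Msp^{ss}_\mu$ under $j_\xi^\ast$ (this is what lets you extract $j_\xi^\ast\DTS^{mot}$ from the defining $\Sym$-equation at all), and extend Lemma \ref{integration_map} to the relative Hall algebra. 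None of this is automatic, though a section-of-a-separated-morphism argument can probably replace the residue-field argument for $u_n$. Second, the relative socle stratification and the resulting identity $f\ast g=1$ over $\tilde{S}_\xi$ --- which you yourself flag as ``the main obstacle'' --- is exactly the substance of the proof and cannot be left as bookkeeping; in particular you must stratify $\tilde{S}_\xi$ further by socle type, so the Luna strata alone do not suffice as stated. As written, the proposal identifies a viable but more laborious path and asserts its hardest steps; the paper's generic-point/spreading-out argument is the shorter and complete one.
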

\begin{proof}
 It is enough to construct such a stratification on each scheme $\Msp^{ss}_d$ with $d\in \Lambda_\mu$. In order to prove the corollary, it suffices to construct an \'{e}tale neighborhood of  the generic point $x$ of $\Msp^{ss}_{d}$ and to show the absence of denominators on this neighborhood. If that has been done, we can restrict ourselves to the closed complement $Z$ of the  open image of this neighborhood and proceed with the generic points of the irreducible components of $Z$. Continuing this way, we get lots of \'{e}tale neighborhoods $\tilde{S}_\kappa$ inside closed subvarieties, and  $S_\kappa$ will denote their locally closed image in $\Msp^{ss}_\mu$. \\
To show the absence of denominators on an \'{e}tale neighborhood of the generic point $x$ of an irreducible subscheme inside $\Msp^{ss}_d$, we can use the alternative definition of $\Ka_0(\Var/\Spec B)$ given in Remark \ref{reduction_to_affine_case}. Write $\Spec A$ for a  Zariski neighborhood of $x$ and choose a finite separable extension $\KK\supset \kk(x)$ as in Theorem \ref{intconjsv}. Denote with $B$ the normalization of $A\subset \kk(x)$ inside $\KK$. Of course, $\KK=\Quot(B)$ is the quotient field of $B$. Replacing $\Spec A$ with an affine open subscheme, we can assume that $\Spec B\to \Spec A$ is an \'{e}tale cover, i.e.\ $\Spec B$ an \'{e}tale neighborhood of the generic point $x$. To prove the absence of denominators on $\Spec B$, or an open affine subscheme of $\Spec B$, we  have to show the following for arbitrary $r\ge 1$ and arbitrary $f\in \Ka_0(\Var/\Spec B)$: If there is an element $g \in \Ka_0(\Var/\Quot(B))$ given by linear combinations of finitely generated $\Quot(B)$-algebras such that  $f\otimes_B \Quot(B)=g(\LL^r-1)=g\otimes_{\Quot(B)} \Quot(B)[x_1,\ldots,x_r]- g$, then one can find elements $b\in B$ and $\tilde{g}\in\Ka_0(\Var/\Spec B_b)$ given by linear combinations of finitely generated $B_b$-algebras  such that $f\otimes_B B_b=\tilde{g}(\LL^r-1)=\tilde{g}\otimes_{B_{b'}} B_{b'}[x_1,\ldots,x_r]- \tilde{g}$. In such a situation, we may replace the open neighborhood of $x$ with $\Spec B_b$ and cancel a denominator of the form $\LL^r-1$. \\
As any finite set of finitely generated $\Quot(B)$ algebras is already defined over $B_{b'}$ for some $b'$, we can certainly ``lift'' $g$ to some $g'$. It remains to show that $f\otimes_B B_b=g'\otimes_{B_{b'}} B_{b'}[x_1,\ldots,x_r]- g'$. Over $\Quot(B)$ this is true due to the existence of a finite chain of relations presented in Remark \ref{reduction_to_affine_case}. But each of these relations does also lift to a relation over $B_b$ for some sufficiently ``large'' $b\in B\subset B_{b'}$. Then $\tilde{g}:=g'\otimes_{B_{b'}} B_{b}$ does what we want. 
\end{proof}
The following result is also a consequence of Theorem \ref{DT=IC}, but the previous corollary allows a more direct proof without any knowledge about mixed Hodge modules. 
\begin{corollary}[Integrality Conjecture, classical version] If $\zeta$ is $\mu$-generic, the motivic Donaldson--Thomas function $\DTS^{mot}_\mu$ has a realization in integer valued constructible functions on $\Msp^{ss}_\mu$. In particular, the Euler characteristic of $\DT_d^{mot}$ is an integer for all $d\in \Lambda^\mu$.
\end{corollary}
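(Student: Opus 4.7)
The plan is to deduce the classical integrality directly from Corollary \ref{integrality_stratification} via the Euler characteristic realization, bypassing mixed Hodge modules entirely.

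First, I recall that for any finite-type $\kk$-scheme $X$, the assignment $[Y\xrightarrow{\phi}X]\mapsto \bigl(x\mapsto \chi_c(\phi^{-1}(x))\bigr)$ defines a ring homomorphism $\chi:\Ka_0(\Var/X)\to C(X,\ZZ)$ into the ring of integer-valued constructible functions on $X$. Since $\chi(\LL)=1$, the choice $\chi(\LL^{1/2}):=1$ extends this to a ring homomorphism $\chi:\Ka_0(\Var/X)[\LL^{-1/2}]\to C(X,\ZZ)$. Note that $\chi$ admits no extension to the larger ring $\Ka_0(\Var/X)[\LL^{-1/2},(\LL^r-1)^{-1}:r\ge 1]$ because $\chi(\LL^r-1)=0$; this is precisely why the étale-local integrality of Corollary \ref{integrality_stratification} is needed.

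Next, I apply Corollary \ref{integrality_stratification}: on each stratum $S_\kappa$ of the finite stratification $\Msp^{ss}_\mu=\sqcup_\kappa S_\kappa$ the pullback $j_\kappa^\ast \DTS^{mot}_\mu$ along the étale surjection $j_\kappa:\tilde S_\kappa\to S_\kappa$ lies already in $\Ka_0(\Var/\tilde S_\kappa)[\LL^{-1/2}]$, where $\chi$ makes sense. This produces an integer-valued constructible function $\chi(j_\kappa^\ast \DTS^{mot}_\mu)$ on $\tilde S_\kappa$. Since $\chi_c$ is invariant under extension of the ground field in characteristic zero, the value at any preimage $\tilde x\in j_\kappa^{-1}(x)$ depends only on $x$, so this function descends along the surjective étale map $j_\kappa$ to an integer-valued constructible function on $S_\kappa$. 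Assembling over the finitely many strata yields a globally integer-valued constructible function on $\Msp^{ss}_\mu$ realizing $\DTS^{mot}_\mu$, as desired.

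For the ``in particular'' statement, I use $\DT^{mot}_d=\dim_!\DTS^{mot}_d$ and the compatibility of $\chi$ with proper pushforward to obtain
\[
\chi(\DT^{mot}_d)=\int_{\Msp^{ss}_d}\chi(\DTS^{mot}_d)=\sum_{n\in\ZZ} n\cdot\chi_c\bigl(f^{-1}(n)\bigr),
\]
where $f:=\chi(\DTS^{mot}_d)$. Since the integrand is an integer-valued constructible function on the finite-type scheme $\Msp^{ss}_d$, this is a finite sum of integers, hence an integer. The entire argument is formal once Corollary \ref{integrality_stratification} is in place; the genuine obstacle --- namely that $\DTS^{mot}_\mu$ is a priori only defined modulo denominators $(\LL^r-1)^{-1}$ --- has already been overcome by reducing to Efimov's theorem in the proof of that corollary.
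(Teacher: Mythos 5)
Your argument is correct and is essentially the proof the paper intends: the paper states this corollary without proof, remarking only that it is a direct consequence of Corollary \ref{integrality_stratification}, and your Euler-characteristic specialization (killing $\LL^{1/2}$, which cannot survive the denominators $(\LL^r-1)^{-1}$) followed by descent along the \'etale covers $j_\kappa$ is exactly that deduction. The one point worth making explicit is that the well-definedness of the descended value at a point $x$ rests not just on invariance of $\chi_c$ under field extension but on Proposition \ref{localDT}, which shows that after a finite extension $\DTS^{mot}(x)$ lies canonically in $\ZZ[\LL^{\pm 1/2}]$, so that its specialization is an unambiguous integer independent of the chosen lift to $\Ka_0(\Var/\tilde S_\kappa)[\LL^{-1/2}]$.
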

We can even refine the last statement of the corollary to motives.
\begin{corollary}[Integrality Conjecture, absolute version] 
For a $\mu$-generic stability condition $\zeta$ and arbitrary dimension vector $d\in \Lambda_\mu$, the Donaldson--Thomas invariant $\DT^{mot}_d$ is in the image of the natural map 
\[ \Ka_0(\Var/\kk)[\LL^{-1/2}] \longrightarrow \Ka_0(\Var/\kk)[\LL^{-1/2}, (\LL^r-1)^{-1}: r\ge 1].\]
\end{corollary}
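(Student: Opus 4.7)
The plan is to combine the main theorem, in its Hodge realization, with the explicit rational-function shape of $\DT^{mot}_d$ forced by the quiver construction. By Corollary \ref{intconj3} we already have $\DT^{mot}_d$ in the image of $\Ka_0(\Var/\kk)[\LL^{-1/2},[\PP^N]^{-1}:N\ge 1]$ inside $\Ka_0(\Var/\kk)[\LL^{-1/2},(\LL^r-1)^{-1}:r\ge 1]$; what remains is to absorb the denominators $(\LL^r-1)^{-1}$ entirely.

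First I would sharpen the ambient subring in which $\DT^{mot}_d$ sits: it actually lies in $\ZZ[\LL^{\pm 1/2},(\LL^r-1)^{-1}:r\ge 1]$, i.e.\ it is a rational function in $\LL^{1/2}$ with integer coefficients. The class $[\Mst^{ss}_d]=[R^{ss}_d]/[G_d]$ lies in $\ZZ[\LL^{\pm 1},(\LL^r-1)^{-1}]$, because $[R^{ss}_d]\in\ZZ[\LL]$ by Harder--Narasimhan induction starting from the affine space $R_d$, while $[G_d]=\prod_{i\in Q_0}[\Gl_{d_i}]$ is an explicit product of cyclotomic-type factors. The remaining ingredients defining $\DTS^{mot}_\mu$ out of $p_!\ICS^{mot}_{\Mst^{ss}_\mu}$---the $\lambda$-ring operations, the $\Sym$ inversion producing $\DTS^{mot}_\mu$, and the push-forward $\dim_!$---all preserve this subring.

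Next I would pass to the Hodge realization via the $\lambda$-ring homomorphism recalled at the beginning of section 6, sending $[X\xrightarrow{q}\Msp]\mapsto q_!\QQ$. It carries $\DT^{mot}_d$ to $\DT_d$, and Theorem \ref{DT=IC} identifies this with $\IC_c(\Msp^{ss}_d,\QQ)$ when $\Msp^{st}_d\ne\emptyset$ and with $0$ otherwise. In either case $\DT_d$ already lies in the unlocalized ring $\Ka_0(\MHM(\CC))[\LL^{-1/2}]$, with no $(\LL^r-1)^{-1}$ denominators needed.

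To conclude, write $\DT^{mot}_d=P(\LL^{1/2})/Q(\LL)$ with $P\in\ZZ[\LL^{\pm 1/2}]$ and $Q=\prod_i(\LL^{r_i}-1)$. The Hodge realization sends this formal expression to the same ratio $P/Q$ inside $\Ka_0(\MHM(\CC))[\LL^{-1/2},(\LL^r-1)^{-1}]$, and by the previous step the image already lies in $\Ka_0(\MHM(\CC))[\LL^{-1/2}]$. Applying the Remark preceding Theorem \ref{DT=IC} to the point $\Spec\CC$, the latter is free over $\ZZ[\LL^{\pm 1/2}]$ on a basis of intersection complexes, and $\ZZ[\LL^{\pm 1/2}]$ embeds as $\ZZ[\LL^{\pm 1/2}]\cdot\unit$. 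Since $Q$ is not a zero-divisor on this free module, the equality $Q\cdot\DT_d=P\cdot\unit$ forces $Q\mid P$ in $\ZZ[\LL^{\pm 1/2}]$, so $P/Q\in\ZZ[\LL^{\pm 1/2}]$. Reading this conclusion back motivically shows $\DT^{mot}_d\in\ZZ[\LL^{\pm 1/2}]\subset\Ka_0(\Var/\kk)[\LL^{-1/2}]$, as required. The main obstacle is the very first step, namely pinning down the small subring $\ZZ[\LL^{\pm 1/2},(\LL^r-1)^{-1}]$ in which $\DT^{mot}_d$ lives before any realization; once this is in place the Hodge-theoretic vanishing of the denominators transports back to the motivic side essentially automatically.
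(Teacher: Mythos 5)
Your argument is correct and follows essentially the same route as the paper's own proof: the paper likewise first shows, via the Harder--Narasimhan recursion (Theorem 5.1 of Reineke's HN paper) together with the integration map and $\dim_!$, that $\DT^{mot}_d$ lies in the subring $\ZZ[\LL^{\pm 1/2},(\LL^r-1)^{-1}:r\ge 1]$, and then identifies it with the weight ``polynomial'' of $\IC_c(\overline{\Msp^{st}_d},\QQ)$ via Theorem \ref{DT=IC} to conclude that the denominators cancel and $\DT^{mot}_d\in\ZZ[\LL^{\pm 1/2}]$. The only point you leave implicit, and which the paper states explicitly, is that the coefficients of this rational function in $\LL^{1/2}$ are independent of the ground field $\kk$, which is what lets the Hodge-theoretic conclusion (available only over $\CC$) be read back for arbitrary $\kk$ of characteristic zero.
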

\begin{proof}
 Unfortunately, the previous statement holds only for an ``\'{e}tale stratification''. If it were true for a Zariski stratification, i.e.\ $\tilde{S}_\kappa=S_\kappa$, we could  just integrate the Donaldson--Thomas function over $\Msp^{ss}_d$. As we do not have such a result, we need to argue in a different way. By applying  Lemma \ref{integration_map} and $\dim_!$ to the formula of Theorem 5.1 in \cite{Reineke_HN}, one shows easily that $\DT^{mot}_d$ is an element of the subring $\ZZ[\LL^{\pm 1/2}][(\LL^r-1)^{-1}:r\ge 1]$ of $\Ka_0(\Var/\kk)[\LL^{-1/2}, (\LL^r-1)^{-1}:r\ge 1]$ with coefficients being independent of the ground field. In particular, it can be identified with the weight ``polynomial'' of its Hodge realization $\IC_c(\overline{\Msp^{st}_d},\QQ)$ due to Theorem \ref{DT=IC}. Therefore, $\DT^{mot}_d$ is indeed in $\ZZ[\LL^{\pm 1/2}]\subset \Ka_0(\Var/\kk)[\LL^{-1/2}]$.   
\end{proof}

As we have seen, it would be nice to improve Theorem \ref{intconjsv} in such a way that integrality holds already for $\DTS^{mot}(x)=\DTS^{mot}|_{\Spec\kk(x)}$ at any point $x\in \Msp^{ss}_\mu$. In this case, we can even prove integrality of $\DTS^{mot}_\mu$ following the arguments of Corollary \ref{integrality_stratification} which of course implies the result for points. However, we are rather skeptical  that such an improvement exists in the (naive) motivic world, due to the following argument. The map $R^{st}_d\to \Msp^{st}_d$ is in general just an \'{e}tale locally trivial principal $PG_d=G_d/\GG_m$-bundle, and $PG_d$ is not special if $\gcd(d_i:i\in Q_0)\not=1$. Hence, its fiber $F$ at the generic point $x\in \Msp^{st}_d$ is a twisted form of $PG_d$. If relative integrality holds in the stronger form, we get a motive $M:=\LL^{\frac{1-(d,d)}{2}}\DTS^{mot}(x)\in \Ka_0(\Var/\kk(x))[\LL^{-1/2}]$ with $[F]=[PG_d]M$. After base change $M$ becomes $1$ which does, however, not imply $M=1$. Similarly, working with the Hilbert--Chow morphism, we get $[Q]=[\PP^{fd-1}]M$ for all (even) $f\in \NN^{Q_0}$, where $Q$ is a twisted form of $\PP^{fd-1}$. In general, (naive) motives of twisted forms behave very different. Over finite fields, the numbers of $\mathbbm{F}_p$-rational points, which is a motivic invariant, do not coincide. \\
Due to the relative hard Lefschetz theorem, the Hodge realization cannot distinguish between \'{e}tale locally trivial $\PP^{fd-1}$-fibrations and the trivial one. This was definitely used to prove the integrality of $\DTS_\mu$.

\bibliographystyle{plain}
\bibliography{Literatur}

\vfill
\textsc{\small S. Meinhardt: Fachbereich C, Bergische Universit\"at Wuppertal, Gau{\ss}stra{\ss}e 20, 42119 Wuppertal, Germany}\\
\textit{\small E-mail address:} \texttt{\small meinhardt@uni-wuppertal.de}\\
\\

\textsc{\small M. Reineke: Fachbereich C, Bergische Universit\"at Wuppertal, Gau{\ss}stra{\ss}e 20, 42119 Wuppertal, Germany}\\
\textit{\small E-mail address:} \texttt{\small mreineke@uni-wuppertal.de}\\

\end{document}